\documentclass[a4paper,11pt]{amsart}
\usepackage[plainpages=false]{hyperref}
\usepackage{amsfonts,latexsym,amsthm}
\usepackage{amsmath,amssymb,latexsym,lscape,rawfonts}

\def\TT{{\mathbb{T}}}

\def\NN{{\mathbb{N}}}
\def\PP{{\mathbb{P}}} 

\def\ZZ{{\mathbb{Z}}}

\def\CC{{\mathbb{C}}}
\def\RR{{\mathbb{R}}} 
 
\def\TT{{\mathbb{T}}}

\newtheorem{thm}{Theorem}[section]

\newtheorem{cor}[thm]{Corollary}
\newtheorem{lemma}[thm]{Lemma}

\newtheorem{prop}[thm]{Proposition}
\newtheorem{rmk}[thm]{Remark}
\newtheorem{Def}[thm]{Definition}
\newtheorem{conj}[thm]{Conjecture}

\title{Calabi flow on projective bundles, I} 
\author{Hongnian Huang}
\date{}

\begin{document}

\begin{abstract}
In this paper, we obtain several a-priori estimates for the Calabi flow on projective bundles admitting the generalized Calabi constructions.
\end{abstract}
\maketitle

\section{Introduction}

In Kaehler geometry, there are mainly two conjectures concerning the behavior of the Calabi flow:

\begin{conj}[Chen]
\label{conj1}
The Calabi flow exists for all time.
\end{conj}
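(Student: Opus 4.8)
The plan is to prove long-time existence by the standard parabolic scheme --- short-time existence, a continuation criterion, and a priori estimates on finite time intervals that prevent the continuation criterion from being triggered. Short-time existence of the Calabi flow $\partial_t\omega = i\partial\bar\partial S(\omega)$ from an arbitrary smooth Kaehler metric is known (Chen--He), and fourth-order parabolic bootstrapping shows that the maximal existence time $T$ is finite only if the full Riemann curvature of $\omega_t$ becomes unbounded as $t\uparrow T$ (equivalently, the metrics $\omega_t$ degenerate). So the problem reduces to proving that \emph{on any finite interval $[0,T)$ the curvature of $\omega_t$ stays uniformly bounded}. The a priori information available for this is the monotonicity of the flow's Lyapunov functionals: the Calabi energy $\mathrm{Ca}(\omega_t)=\int_M (S(\omega_t)-\underline S)^2\,\omega_t^n$ is non-increasing, and the Mabuchi K-energy $\mathcal M(\omega_t)$ is non-increasing with $-\tfrac{d}{dt}\mathcal M = \mathrm{Ca}$, so $\int_0^T \mathrm{Ca}(\omega_t)\,dt \le \mathcal M(\omega_0)-\inf\mathcal M$; since the Calabi flow is the downward gradient flow of $\mathcal M$ in the Mabuchi $L^2$ metric, its speed is $\mathrm{Ca}^{1/2}$, so the flow path has finite length on every finite time interval and hence stays in a bounded region of the metric completion of the space of Kaehler potentials.

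The heart of the proof is to upgrade this weak information to $C^\infty$ control of $\omega_t$. From the Chen--Tian decomposition of $\mathcal M$, monotonicity of the K-energy yields a uniform bound on the entropy $\int_M \log\!\big(\omega_t^n/\omega^n\big)\,\omega_t^n$, i.e.\ $L^1\log L^1$ control of the volume ratio. One then wants, in order: (i) a two-sided bound $c^{-1}\omega^n \le \omega_t^n \le c\,\omega^n$; (ii) an $L^\infty$ bound on the relative potential $\varphi_t$, via a parabolic version of the Kolodziej--Blocki pluripotential estimates; (iii) a bound on $\mathrm{tr}_\omega\omega_t$ --- a parabolic Aubin--Yau / Schwarz-lemma inequality --- giving uniform equivalence $c^{-1}\omega \le \omega_t \le c\,\omega$; and (iv) the higher-order bounds from fourth-order parabolic Schauder and Calderon--Zygmund theory, which apply once the leading coefficients (the metric) are controlled. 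Steps (i)--(iv) then contradict the blow-up scenario of the continuation criterion and force $T=\infty$.

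The main obstacle is steps (i)--(iii), and it is the reason the conjecture is hard: the Calabi flow is fourth order, so there is no maximum principle, and one cannot obtain $C^0$ or $C^2$ control of the metric by the comparison arguments that work for the second-order Kaehler--Ricci flow. The monotone functionals give only $L^2$ control of the scalar curvature and an entropy bound --- both at the ``$L^1$ level'' --- which is too weak to start a Moser iteration, because that iteration needs the Sobolev constant of $\omega_t$, and that constant is itself controlled only after the metric is controlled: a genuine circularity. Breaking it seems to require a new ingredient --- a differential-Harnack / Li--Yau--Hamilton inequality valid for this fourth-order flow, or an $\varepsilon$-regularity theorem for the Calabi flow together with a concentration-compactness argument ruling out curvature bubbling --- and supplying such an estimate in full generality is exactly the point that is missing. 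This is why the present paper proves the estimates only for projective bundles admitting the generalized Calabi construction: there the Calabi flow descends to a parabolic equation for the momentum profile along the fibers, the fourth-order operator becomes effectively one-dimensional, a maximum principle is restored through convexity of the profile, and the chain (i)--(iv) becomes accessible; the route just sketched for Conjecture~\ref{conj1} in full generality stalls precisely at (i)--(iii).
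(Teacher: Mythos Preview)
The statement you are attempting to prove is labeled a \emph{Conjecture} in the paper, not a theorem: the paper offers no proof of Conjecture~\ref{conj1} and explicitly presents it as open, using it only as motivation for the partial results (interior estimates and $C^0$-estimates on certain projective bundles) that follow. There is therefore no ``paper's own proof'' to compare your proposal against.

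Your write-up is in fact consistent with this: you correctly lay out the standard continuation scheme and then identify the genuine obstruction --- the absence of a maximum principle for the fourth-order flow and the circularity between Sobolev control and metric control --- and you explicitly concede that the argument ``stalls precisely at (i)--(iii)'' in full generality. That is an accurate diagnosis of why the conjecture remains open, but it is not a proof, and you should not present it as one. A minor correction: your closing remark that on projective bundles the flow reduces to a one-dimensional profile equation with a restored maximum principle describes the ruled-surface case ($l=1$), whereas in this paper the fiber is $\CC\PP^2$ ($l=2$) and the reduction is to a two-dimensional toric problem on the Delzant polytope; the estimates are obtained not via a maximum principle but through the toric interior-regularity techniques of \cite{CHS} together with a uniform Sobolev bound coming from Yamabe-constant control on the $\CC\PP^2$ fiber.
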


\begin{conj}[Donaldson]
\label{conj2}
If the Calabi flow exists for all time and there exists a constant scalar curvature Kaehler (cscK for short) metric in the Kaehler class, then the Calabi flow converges to a cscK metric.
\end{conj}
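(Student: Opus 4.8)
\medskip
\noindent\textbf{A proof strategy for Conjecture~\ref{conj2}.}
We are given that the Calabi flow $\omega_t = \omega_0 + i\partial\bar\partial\phi_t$, $\dot\phi_t = S(\omega_t) - \underline S$ (with $\underline S$ the average scalar curvature), exists for all $t \in [0,\infty)$ and that $[\omega_0]$ carries a cscK metric. The plan is to reduce convergence to a uniform, time-independent a-priori estimate via the variational geometry of the space of potentials. First, the existence of a cscK metric makes the Mabuchi K-energy $\mathcal M$ bounded below and, by Chen--Cheng and Berman--Darvas--Lu, $\mathrm{Aut}_0(X)$-proper: $\mathcal M(\phi) \ge \varepsilon\, J(\phi) - C$ after normalizing $\phi$ in its automorphism orbit. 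A direct computation gives $\frac{d}{dt}\mathcal M(\phi_t) = -\mathcal C(\omega_t) \le 0$, where $\mathcal C$ is the Calabi energy, while Calabi's identity $\frac{d}{dt}\mathcal C(\omega_t) = -2\int_X |\bar\partial\nabla^{1,0}(S-\underline S)|^2\,\omega_t^n \le 0$ shows $\mathcal C$ is non-increasing. Since $\int_0^\infty \mathcal C(\omega_t)\,dt = \mathcal M(\phi_0) - \lim_{t\to\infty}\mathcal M(\phi_t) < \infty$, we get $\mathcal C(\omega_t)\to 0$, and properness forces the (automorphism-normalized) orbit $\{\phi_t\}$ to remain in a $d_1$-bounded subset of the metric completion $\mathcal E^1$ of the space of potentials.

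The second and decisive step is to upgrade this $d_1$-boundedness, together with the K-energy bound $\mathcal M(\phi_t) \le \mathcal M(\phi_0)$, to uniform-in-$t$ $C^\infty$ bounds on $\omega_t$. From Chen's entropy decomposition of $\mathcal M$ (entropy plus energy terms controlled by the $d_1$-diameter of the orbit), the upper bound on $\mathcal M(\phi_t)$ yields a uniform bound on the entropy $\int_X \log(\omega_t^n/\omega_0^n)\,\omega_t^n$. One then needs a parabolic analogue of the Chen--Cheng estimates: an entropy bound should force a uniform $C^0$ bound, and then, by analyzing the scalar-curvature equation as a coupled complex Monge--Amp\`ere system à la Chen--Cheng, a uniform $C^\infty$ bound on the potential relative to the cscK metric. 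It is precisely here that the present paper enters in the case at hand: for a projective bundle admitting the generalized Calabi construction, the Calabi flow stays within the admissible ansatz and reduces to a fourth-order parabolic equation for the momentum profile on a bounded interval, for which the a-priori estimates proved in this paper supply the required uniform control directly, without invoking the full Chen--Cheng machinery.

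Given uniform $C^\infty$ bounds, Arzel\`a--Ascoli and parabolic bootstrapping produce $t_j\to\infty$ with $\omega_{t_j}\to\omega_\infty$ in $C^\infty$; since $\mathcal C(\omega_t)\to 0$ we have $\mathcal C(\omega_\infty)=0$, i.e.\ $\omega_\infty$ is cscK. To pass from subsequential convergence to convergence of the whole flow, apply a {\L}ojasiewicz--Simon gradient inequality for the (real-analytic) Calabi energy near $\omega_\infty$, as in the stability theory for the Calabi flow near a cscK metric (Chen--He, Tosatti--Weinkove, Huang--Zheng): once the orbit enters a fixed neighborhood of $\omega_\infty$ it is trapped there and converges, at an exponential or polynomial rate and, when $\mathrm{Aut}_0(X)$ is nontrivial, modulo a convergent one-parameter family of automorphisms. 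This gives the asserted convergence to a cscK metric. The main obstacle is the second step: a uniform-in-time parabolic a-priori estimate of Chen--Cheng type is not available in full generality, and establishing it unconditionally for projective bundles with the generalized Calabi construction is exactly the purpose of the estimates in this paper; the general case of Conjecture~\ref{conj2} would then follow from such an estimate together with the given long-time existence.
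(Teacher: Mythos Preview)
The statement you are attempting to prove is \emph{Conjecture}~\ref{conj2}: the paper records it as an open problem attributed to Donaldson and does \emph{not} offer a proof. There is therefore no ``paper's own proof'' to compare your proposal against; the paper's contribution is instead the partial a-priori estimates of Theorems~\ref{interior} and~\ref{thm_proj_Sob} and Corollary~\ref{c0}, which address \emph{long-time existence} rather than convergence.

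Your proposal is a reasonable heuristic outline of how one might hope to attack the conjecture, and you correctly flag the essential gap yourself: the passage from bounded $d_1$-distance plus bounded entropy to uniform-in-$t$ $C^\infty$ bounds is not known in general. However, you overstate what the present paper delivers when you say that ``the a-priori estimates proved in this paper supply the required uniform control directly.'' Theorem~\ref{interior} gives $C^\infty$ control only on the compact subsets $P_\epsilon$ of the \emph{interior} of the Delzant polytope, with constants blowing up as $\epsilon\to 0$; it says nothing about the behaviour near $\partial P$, which is exactly where the Guillemin boundary singularity lives and where the estimates needed for $C^\infty$ compactness on the whole manifold must be established. Corollary~\ref{c0} yields only a uniform $C^0$ bound on the symplectic potential, under an additional Sobolev hypothesis, and no higher-order boundary estimate. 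Thus even in the admissible setting of this paper, your ``second and decisive step'' is not completed here, and convergence does not follow.

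Two smaller points: (i) the fibre in this paper is $\CC\PP^2$, so the reduced problem lives on a two-dimensional polytope (a triangle), not ``a bounded interval''; (ii) the Chen--Cheng estimates and the Berman--Darvas--Lu properness you invoke postdate this paper and are not part of its toolkit, so while they may well be the right ingredients for a future proof, they are not what the paper is using or claiming.
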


On one hand, there are many work concerning \autoref{conj2} recently, see \cite{Berman, H3, H4, St, He, LWZ}. On the other hand, to the best of the author's knowledge, our understanding of the long time existence of the Calabi flow is restricted into the following two cases:

\begin{enumerate}
\item  In the cases of Riemann surfaces and ruled manifolds, the Calabi flow equation is reduced of an ODE equation and its long time existence has been proved, see \cite{Chr, Chen1, Guan, Sz}. 

\item Chen and He \cite{ChenHe2, ChenHe3} prove the long time existence of the Calabi flow on toric Fano surfaces under the following two assumptions: 1. the initial metric is a toric invariant one. 2. the Calabi energy of the initial metric is less than an explicit constant. 

\item Joint with Feng \cite{FH}, we proved that the Calabi flow exists for all time in $\CC^2 / (\ZZ^2 + \sqrt{-1} \ZZ^2)$ starting from a torus invariant metric.
\end{enumerate}

In this paper, we study the long time existence problem of the Calabi flow on $(X, J) = P(E) \to S$, where $E = \mathcal{O} \oplus \mathcal{L}_1 \oplus \mathcal{L}_2$ and $\mathcal{L}_1, \mathcal{L}_2$ are holomorphic line bundles over a cscK manifold $(S, \omega_S)$. Apostolov, Calderbank, Gauduchon and T\o nnesen-Friedman \cite{ACGT} constructed the generalized Calabi metric (or the admissible metric) on $(X, J)$. Note that their construction of a generalized Calabi metric is in a Delzant polytope $P$ associated with $(X, J)$ by the moment map. Thus the techniques developed in toric geometry can be applied in $(X, J)$. By doing analysis on the generalized Calabi constructions in \cite{ACGT}, we obtain the a-priori interior estimates on $(X, J)$ of the Calabi flow. We also obtain the a-priori $C^0$-estimate of the Calabi flow in some cases by showing that the Sobolev constants are uniformly bounded along the Calabi flow. More explicitly, we have:

\begin{thm}[Interior esimates]
\label{interior}
Let $u(t), ~ t \in [0, T)$ be a one parameter group of symplectic potentials satisfying the Calabi flow equation. Then for any $\epsilon > 0$, there exists $C(\epsilon) > 0, ~ C(k, \epsilon) > 0, ~ \forall k \in \NN$ such that
\begin{align*}
C(\epsilon)^{-1} I_2 < (D^2 u_t(x)) < C(\epsilon) I_2,\\
|D^k u_t (x)| < C(k, \epsilon),
\end{align*}
where $I_2$ is a $2*2$ identity matrix and $x \in P_\epsilon$ which consists all the points whose Euclidean distance to $\partial P$ is at least $\epsilon$.
\end{thm}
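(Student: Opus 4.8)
The plan is to carry out the analysis on the fixed Delzant polytope $P\subset\RR^2$ associated by the moment map to the fibrewise $T^2$-action on $X=P(\mathcal O\oplus\mathcal L_1\oplus\mathcal L_2)$. By the construction of \cite{ACGT}, an admissible Kaehler metric in the given class corresponds to a convex symplectic potential $u$ on $P$ with Guillemin boundary behaviour, whose scalar curvature is the weighted Abreu function $S(u)=-\frac1{p}\sum_{i,j}\partial_i\partial_j(p\,u^{ij})+g$, where $(u^{ij})=(D^2u)^{-1}$, $p>0$ is a fixed polynomial on $\bar P$ (the relative fibre volume) and $g$ is a fixed smooth function on $\bar P$ built from $\mathrm{Scal}_{\omega_S}$, which is constant because $(S,\omega_S)$ is cscK. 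In these coordinates the Calabi flow becomes the fourth-order parabolic equation $\partial_t u_t=-(S(u_t)-\underline S)$, and since the flow preserves the class and the $T^2$-invariance, $u_t$ stays in this family on $[0,T)$. I shall use the two monotonicity facts: the Calabi energy $\mathcal E(u_t)=\int_P(S(u_t)-\underline S)^2p\,dx$ is non-increasing, and the Mabuchi functional $\mathcal M(u_t)$ is non-increasing with $\frac{d}{dt}\mathcal M(u_t)=-\mathcal E(u_t)$.

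The first step is to extract a-priori integral bounds, all depending only on the initial data. Monotonicity immediately gives $\int_P(S(u_t)-\underline S)^2p\,dx\le\mathcal E(u_0)$ for all $t$. Next, normalising $u_t$ by an affine function (which alters neither the metric nor $S(u_t)$) so that $u_t\ge 0$ attains its minimum at a fixed interior point, I would derive the entropy-type bound $\int_P|\log\det D^2u_t|\,p\,dx\le C$. Here one writes $\mathcal M(u_t)$ as the sum of the entropy term $-\int_P\log\det D^2u_t\,p\,dx$ and a Donaldson-type functional that is linear in $u_t$ (a weighted boundary integral of $u_t$ minus a weighted bulk integral), and uses the positivity of the polynomial weight $p$ and the combinatorics of $P$ to control that linear functional, so that $\mathcal M(u_t)\le\mathcal M(u_0)$ pins down the entropy term; this is the point at which the explicit structure of the generalized Calabi construction of \cite{ACGT} is really used, and handling the weight $p$ here is already nontrivial.

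\emph{The analytic heart, which I expect to be the main obstacle, is the two-sided interior Hessian bound.} The approach is to adapt the interior estimates for Abreu's equation in two real dimensions — in the spirit of Donaldson's work on toric surfaces and of Trudinger--Wang and of Chen--Li--Sheng on the affine maximal and Abreu equations — to the present weighted equation $-\frac1{p}\partial_i\partial_j(p\,u_t^{ij})=S(u_t)-g$ on $P$. From the convexity of $u_t$, its fixed boundary data, the $L^2$-control of $S(u_t)$ (supplemented, where the purely elliptic theory would want more, by the extra regularity afforded by the parabolic flow), and the entropy bound, one first bounds $\det D^2u_t$ from above and below on $P_\epsilon$, and then upgrades this to the full pinch $C(\epsilon)^{-1}I_2<D^2u_t<C(\epsilon)I_2$ on $P_\epsilon$ — a genuinely two-dimensional, affine-geometric phenomenon for which the fibre polytope being $2$-dimensional (fibre $\mathbb{CP}^2$, torus $T^2$) is essential. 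Concretely this amounts to re-running the Pogorelov- and cofactor-type maximum-principle arguments that underlie those interior estimates, now with the $p$-weighted operators; carrying the polynomial weight through these arguments is the real technical content, and in higher fibre dimension the corresponding statement is not known. This establishes the first line of the theorem.

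Finally I would bootstrap to the higher derivatives. With $D^2u_t$ pinched on every $P_\epsilon$ uniformly in $t$, on $P_{\epsilon/2}$ the Lichnerowicz operator $L_{u_t}$ is uniformly fourth-order elliptic with coefficients controlled by $\|u_t\|_{C^2(P_{\epsilon/2})}$, and $w_t:=S(u_t)-\underline S$ solves the linear parabolic equation $\partial_t w_t=-L_{u_t}w_t$; interior parabolic $L^p$ and Schauder estimates on short backward cylinders then upgrade the uniform $L^2$-bound on $w_t$ to $\|w_t\|_{L^\infty(P_{3\epsilon/4})}\le C(\epsilon)$, so that the scalar curvature is interior-bounded. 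Feeding this back into the weighted Abreu equation, which is now uniformly fourth-order elliptic for $u_t$ with a bounded right-hand side, and iterating interior $L^p$ and Schauder estimates alternately for $u_t$ and for $w_t$, one obtains $|D^ku_t(x)|\le C(k,\epsilon)$ on $P_\epsilon$ for every $k\in\NN$, which is the second line and completes the proof.
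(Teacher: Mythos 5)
The central gap is in your Hessian--pinching step, which is exactly the analytic heart you flag. You propose to ``re-run Pogorelov- and cofactor-type maximum-principle arguments'' for the weighted Abreu operator, admitting that the elliptic theory ``would want more'' than the $L^2$ bound on the scalar curvature you actually have, and deferring to unspecified ``extra regularity afforded by the parabolic flow.'' That missing input is precisely what must be proved, and the paper supplies it by a different, flow-specific mechanism that your proposal does not contain: first a non-collapsing estimate $d_t(P_\epsilon,P_{2\epsilon})>C(\epsilon)$ for the metrics $u_{ij}(t)\,dz_i\otimes dz_j$ (Proposition \ref{distance}), obtained from uniform bounds on $\int_P u^2(t)\,d\mu$, on $\int_P u^{ij}_{~kl}u^{kl}_{~ij}(t)\,d\mu$ and on $\big|\int_{\partial P}u(t)\,d\sigma\big|$; and second a point-picking/blow-up estimate $Q(t,x)\,d^2_{u(t)}(x,\partial P_\epsilon)<C(\epsilon)$ with $Q=|Rm|+|\nabla Rm|^{2/3}+|\nabla^2 Rm|^{1/2}$ (Proposition \ref{blow_up}). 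The latter is proved by rescaling the flow (including rescaling $g_S$), using that $\int_P|Rm|^2\,p\,d\mu$ is scale invariant while the Calabi energy is not, deriving local smoothing estimates $\int_P f\,|\nabla^k Rm|^2\,p\,d\mu\le C(k)$ with cutoff functions, exploiting that the fiber $\CC\PP^2$ is totally geodesic to pass to the fiber curvature, and classifying the blow-up limit (a smooth strictly convex $u$ on $\RR^2$ with $u^{ij}_{~ij}=0$ must be quadratic). Only after these two propositions are in hand does one get the two-sided bound on $D^2u_t$ and the $C^k$ bounds on $P_\epsilon$, via the arguments of Theorem 1.1 of \cite{CHS}. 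Without a replacement for Propositions \ref{distance} and \ref{blow_up} (or a genuinely new interior estimate for the weighted Abreu equation with merely $L^2$ right-hand side, which is not available in the literature you invoke), your main step does not go through; your final bootstrapping paragraph also presupposes an interior $L^\infty$ bound on the scalar curvature that you never establish.

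A second, smaller gap is the entropy bound. To deduce $\int_P|\log\det D^2u_t|\,p\,d\mu\le C$ from $\mathcal{M}(u_t)\le\mathcal{M}(u_0)$ you must bound the linear (Donaldson-type) functional from below uniformly in $t$, and ``positivity of $p$ and the combinatorics of $P$'' does not do this for a general admissible class: that is essentially a coercivity/stability statement, not a formal consequence of the construction. The paper never uses the Mabuchi functional; instead it bounds $\int_P u^2(t)\,d\mu$ by the monotonicity of distance along the Calabi flow (Lemma \ref{l2-estimate}) and controls $\big|\int_{\partial P}u(t)\,d\sigma\big|$ through the explicit decay formula for the Calabi energy (Lemma \ref{difference_Calabi_energy}) combined with integration by parts against $p(z)$. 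Finally, note that the theorem concerns the flow of admissible metrics on $X$, so the base contribution $Scal_S/(\langle p_S,z\rangle+c_S)$ and the behavior of $g_S$ under rescaling must be carried through the estimates; your purely polytope-based reduction leaves this unaddressed.
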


\begin{rmk}
Our result can be extended to the more general cases: 

\begin{enumerate}

\item One can consider the admissible metric on $P(O \oplus E_1 \oplus E_2) \to S$, where $E_i$ are projectively flat bundle over $S$.

\item One can also consider the general admissible metrics where the $\CC \PP^2$ bundle is replaced by any toric surface.

\end{enumerate}

\end{rmk}

\begin{thm}[Uniform Sobolev control]

\label{thm_proj_Sob}
Let $(X, J) = \PP(O \oplus L_1 \oplus L_2) \to S$, where $(S, \omega_S)$ is a Riemann surface with constant scalar curvature $-1, 0$ or $1$. Let $C_S \ge 12 p_1$ be the constant controlling the Kaehler class where $p_1 = \deg(L_1) > \deg(L_2)$. Let $u_{FS}$ be the symplectic potential on the Delzant polytope $P$ corresponding to the Fubini-Study metric of $\CC \PP^2$. Suppose $u(t), t \in [0, T)$ is a one parameter family of symplectic potentials satisfying the Calabi flow equation with $u(0) = u_{FS}$, then the Sobolev constant of $u(t)$ is uniformly bounded for any smooth function $f \in C^\infty(\bar{P})$, i.e., there exists a constant $C > 0$ independent of $t$ such that

$$
\| f \|_{L^3(u(t))} \le C \left( \| f \|_{L^2(u(t))} + \| \nabla f \|_{L^2(u(t))} \right)
$$
\end{thm}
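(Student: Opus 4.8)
The plan is to reduce the uniform Sobolev inequality to a uniform volume-noncollapsing statement plus control of the geometry of $(X,\omega_t)$ via the ACGT construction, and then to invoke a parabolic maximum-principle argument that keeps the symplectic potential $u(t)$ trapped between two fixed potentials.  Recall that on a Delzant polytope the metric $g_{ij} = D^2 u$ and its inverse appear in the Sobolev inequality through the density $\det(D^2 u)$ (the symplectic volume is $u$-independent, but the Riemannian volume and the gradient norm are not), so everything comes down to two-sided bounds on $D^2 u(t)$ up to the boundary together with control of the boundary behaviour dictated by the Guillemin/ACGT boundary conditions.  The interior two-sided bound is already furnished by \autoref{interior}; the genuinely new content is the behaviour near $\partial P$, and this is where the hypotheses $u(0)=u_{FS}$ and $C_S\ge 12p_1$ enter.

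First I would set up the barrier argument.  Along the Calabi flow the evolution of $u$ is $\dot u = -\mathcal{R}(u)$, where $\mathcal{R}$ is the Abreu scalar-curvature operator; the key point from the ACGT/generalized-Calabi structure is that for the $\CC\PP^2$-bundle over a curve the scalar curvature splits into a contribution from the fibre toric surface and a contribution from the base, and the base contribution is controlled by the constant scalar curvature $\pm 1$ or $0$ of $(S,\omega_S)$ together with $C_S$.  I would produce a time-independent supersolution $u_+$ and subsolution $u_-$ — natural candidates are $u_{FS}$ (composed with the fibrewise moment map) plus/minus explicit functions of the base fibre coordinate built from the admissible-metric ansatz — and check, using $C_S\ge 12p_1$ and $p_1>\deg L_2=p_2$, that $\mathcal{R}(u_+)\le 0$ and $\mathcal{R}(u_-)\ge 0$ in the appropriate viscosity/parabolic sense.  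The maximum principle for the (quasilinear, fourth-order) Calabi flow is delicate, so I would instead phase it through the well-known fact that the Calabi flow decreases the Calabi energy and through Chen–He-type estimates: bounding $\|\mathcal R\|_{L^\infty}$ along the flow via the smallness/explicitness of the initial Calabi energy, then integrating $\dot u=-\mathcal R$ to get $|u(t)-u_{FS}|\le Ct$ is not uniform, so the barrier really must be a genuine sub/supersolution, not a Gronwall estimate.

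With $u_-\le u(t)\le u_+$ and matching boundary asymptotics (all three potentials carry the same Guillemin singular part $\sum \ell_k\log\ell_k$ on $\partial P$, forced by the fixed complex structure $J$), I would deduce a two-sided bound $c\,I_2\le D^2u(t)\le C\,I_2$ on all of $P$ after the standard change of variables that straightens the corners — i.e. the metric $(X,\omega_t,J)$ is uniformly equivalent to $(X,\omega_{FS}$-type$,J)$ as a Riemannian manifold, with uniformly bounded geometry.  The Sobolev inequality $\|f\|_{L^3}\le C(\|f\|_{L^2}+\|\nabla f\|_{L^2})$ on the fixed smooth compact manifold $X$ then follows by translating the statement on $\bar P$ with weight $\det(D^2u)$ back to the manifold, where it is the classical Sobolev inequality with a constant depending only on bounded geometry; the $L^3$ (rather than $L^{2n/(n-2)}$) exponent is exactly the Sobolev exponent for the real dimension $\dim_\RR X=2+\dim_\RR S=4$, i.e. $2\cdot 4/(4-2)=4$... so more precisely one uses the bounded-geometry Sobolev inequality and then Hölder to pass to the $L^3$ statement as written, or one uses that $X$ has real dimension $4$ if $S$ is a curve so the sharp exponent is $4>3$.

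The main obstacle I expect is exactly the construction of the time-independent barriers: one must exhibit explicit admissible potentials whose Abreu scalar curvature has a sign, and verify that the flow starting at $u_{FS}$ stays between them.  This hinges on a careful analysis of the ODE governing the profile function in the ACGT ansatz (the ``momentum profile'' $\Theta$), on the inequality $C_S\ge 12p_1$ making the base-curvature term dominate so that the supersolution inequality closes, and on ruling out boundary blow-up — in effect one needs a maximum principle for the fourth-order operator, which I would handle either by factoring the Calabi flow through the (second-order, parabolic) scalar-curvature flow on the potential combined with the already-established interior estimates of \autoref{interior}, or by an energy argument showing $\|u(t)-u_{FS}\|$ in a suitable norm cannot escape the barrier region. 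Controlling the corner/edge contributions to the Sobolev constant uniformly — where $D^2u$ necessarily degenerates in the Guillemin-normal direction — is the secondary technical point, resolved by the standard symplectic-to-complex coordinate change near $\partial P$ that is uniform precisely because the singular part of $u(t)$ is fixed.
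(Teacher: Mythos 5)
There is a genuine gap at the heart of your plan. Everything rests on trapping $u(t)$ between time-independent barriers $u_{\pm}$ and then deducing a two-sided Hessian bound $c\,I_2\le D^2u(t)\le C\,I_2$ on all of $\bar P$. Neither half of this is available. The Calabi flow is fourth order, so there is no comparison/maximum principle to exploit sub- and supersolutions, and the substitutes you mention (an energy argument, or ``factoring through a second-order flow'') are not actual arguments; you acknowledge the difficulty but do not resolve it, nor do you construct the barriers. Moreover, even granting $u_-\le u(t)\le u_+$ with matching Guillemin singular parts, a pointwise two-sided bound on convex functions does not imply two-sided bounds on their Hessians. Finally, a uniform Hessian bound up to $\partial P$ would be a full a priori estimate for the flow --- far stronger than \autoref{interior}, which holds only on $P_\epsilon$ --- and would essentially settle the long-time existence problem; it cannot be an input to the Sobolev bound here. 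There is also a dimension slip: $\dim_{\RR}X=4+2=6$, so $L^3$ is the critical Sobolev exponent for $X$ itself, not a consequence of $X$ being four-dimensional.

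The paper's route is entirely different and much softer, and it is worth seeing why it avoids boundary estimates altogether. One bounds the Sobolev constant of the induced metric on the fiber $\CC\PP^2$ conformally, via the Yamabe constant (\autoref{prop_sc}, following Tian--Viaclovsky, Chen--LeBrun--Weber and Chen--He): if the fiber Calabi energy stays strictly below $96\pi^2$, the Sobolev constant of the fiber metric is controlled. The fiber Calabi energy is then controlled along the flow by combining (i) the explicit pointwise curvature bound \eqref{Control_Rm} for the initial admissible metric, which is where $C_S\ge 12p_1$ enters as a numerical hypothesis; (ii) the monotonicity of $\int_X|Rm|^2$ along the Calabi flow; (iii) the fact that $\CC\PP^2$ is totally geodesic, so $|Rm|^2\ge|Rm|^2_{\CC\PP^2}$; and (iv) the Gauss--Bonnet-type identity making $\int_P|Rm|^2_{\CC\PP^2}\,d\mu$ differ from the fiber Calabi energy by a constant, which together with $Ca(\varphi_{\CC\PP^2}(0))=0$ gives $Ca(\varphi_{\CC\PP^2}(t))<45\pi^2<96\pi^2$ (\autoref{Sobolev_CP2}). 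The weighted $L^3$ inequality on $P$ then follows from the four-dimensional Sobolev inequality on $\CC\PP^2$ because the fiber-volume weight $p(z)$ is bounded above and below on $\bar P$. Your proposal, as it stands, does not reach the statement; if you want to salvage it you would need either a genuinely new comparison principle for Abreu's equation under the Calabi flow or an independent proof of global Hessian bounds, both of which are open.
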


\begin{rmk}
One could easily extend our result to any general cscK manifold $(S, \omega_S)$ with appropriate $C_S$. 
\end{rmk}

The author has initiated another project on the study of the Calabi flow with uniform Sobolev bounds on toric manifolds \cite{H1}. By modifying the techniques developed in \cite{H1}, we obtain the following result:

\begin{cor}[$C^0$-esimate]
\label{c0}
Following the settings in Theorem \ref{thm_proj_Sob}, we have
$$
|u(t)|_{L^\infty} \le C,
$$
for any $t \in [0, T)$.
\end{cor}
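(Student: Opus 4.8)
The plan is to transplant the Moser-type iteration of \cite{H1} to the present admissible (weighted) setting, with Theorem~\ref{thm_proj_Sob} supplying the analytic input and Theorem~\ref{interior} disposing of the interior of $P$. \emph{Step 1 (normalization and reduction to the boundary).} At each time $t$ I subtract from $u(t)$ the affine function agreeing with it to first order at the barycenter $x_{0}$ of $P$; affine functions lie in the kernel of the (weighted) Abreu operator and leave the Guillemin boundary model untouched, so the Calabi flow is unaffected, and I am reduced to normalized convex potentials $u(t)\ge 0$, $u(t)(x_{0})=0$. Convexity gives $\max_{\bar P}u(t)=\max_{\partial P}u(t)$, while Theorem~\ref{interior} already gives $u(t)\le C(\epsilon_{0})$ on $P_{\epsilon_{0}}$ for a fixed small $\epsilon_{0}$; so the whole problem is an a priori upper bound for $\sup_{\bar P}u(t)$, a statement near $\partial P$.

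\emph{Step 2 (integral estimates from the monotone energies).} Along the Calabi flow the Calabi energy is nonincreasing, so $\int_{P}(S(u(t))-\bar S)^{2}\,p\,dx\le C_{1}$ for all $t$, with $C_{1}$ its value at $t=0$ (finite since $u_{FS}$ is smooth) and $p$ the positive affine weight of the ACGT construction, which is two-sided bounded because $C_{S}\ge 12 p_{1}$. Donaldson's integration-by-parts identity, in its weighted admissible form and applied to the test function $f=u(t)$ (using $\operatorname{tr}\!\big((D^{2}u)^{-1}D^{2}u\big)=n$), gives
\[
\int_{\partial P}u(t)\,d\sigma \;=\; \int_{P}S(u(t))\,u(t)\,p\,dx \;+\;(\text{fixed and lower-order terms}),
\]
$d\sigma$ being the Delzant boundary measure. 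Combining this with the Calabi-energy bound, Cauchy--Schwarz, the trivial estimate $\|u(t)\|_{L^{2}(P,p\,dx)}\le C\sup_{\bar P}u(t)$, and the monotonicity of the Mabuchi $K$-energy to absorb the sign-indefinite contributions, one gets a uniform bound $\int_{\partial P}u(t)\,d\sigma\le C_{2}$ and, weakly, $\int_{P}u(t)\,p\,dx\le C_{2}$.

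\emph{Step 3 (Moser iteration).} For $k>C(\epsilon_{0})$ put $f_{k}=(u(t)-k)^{+}$, supported in $A_{k}:=\{u(t)>k\}\subset P\setminus P_{\epsilon_{0}}$. The Sobolev inequality of Theorem~\ref{thm_proj_Sob} reads $\|f_{k}\|_{L^{3}(u(t))}\le C\big(\|f_{k}\|_{L^{2}(u(t))}+\|\nabla f_{k}\|_{L^{2}(u(t))}\big)$, where $\|\nabla f_{k}\|_{L^{2}(u(t))}^{2}=\int_{A_{k}}\langle\nabla u(t),(D^{2}u(t))^{-1}\nabla u(t)\rangle\,p\,dx$. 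Following \cite{H1}, one bounds the gradient term by pairing the weighted Abreu equation against $f_{k}$ and invoking $C_{1}$ via Cauchy--Schwarz, and the $L^{2}$ term by Step~2 together with $|A_{k}|$; this produces a De Giorgi-type inequality on the super-level sets with a superlinear gain, and the usual iteration in $k$ forces $|A_{k}|=0$ for some uniform $k=C$. Hence $\sup_{\bar P}u(t)\le C$, and undoing the now-bounded normalization of Step~1 gives $|u(t)|_{L^{\infty}}\le C$.

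The work is concentrated in Step~3 near $\partial P$. The gradient appearing in the Sobolev inequality is the \emph{fibre} gradient $\langle\,\cdot\,,(D^{2}u)^{-1}\,\cdot\,\rangle$, whose normal eigenvalue degenerates like the defining affine function of each facet --- exactly where the $C^{0}$ bound is wanted --- so one cannot simply quote an interior iteration. The modification of \cite{H1} is to run the iteration facet by facet against the Guillemin model $u\sim u_{G}+(\text{bounded})$, using the Delzant measure $d\sigma$ and the trace bound of Step~2 to absorb the degeneracy, and Theorem~\ref{interior} to glue the interior estimate to the near-boundary iteration; one also checks that Donaldson's integration-by-parts identities and $K$-energy formula survive, with lower-order corrections only, for the positive affine weight $p$ and the cscK base (routine given $C_{S}\ge 12 p_{1}$). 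This near-boundary bookkeeping, rather than any single estimate, is the main obstacle.
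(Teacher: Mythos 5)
Your strategy --- a De Giorgi--Moser iteration run directly on the polytope for the symplectic potential --- is genuinely different from the paper's, but Step~3, which carries all the weight, has a real gap. Such an iteration needs two inputs you never actually produce: (i) a uniform bound on the gradient energy $\int_{A_k}u^{ij}u_iu_j\,p\,d\mu$ of the truncations, and (ii) a weak subsolution-type differential inequality for $u$ with respect to the evolving operator $\Delta_t f=\frac{1}{p}\partial_i(p\,u^{ij}\partial_j f)$ with uniformly controlled data, to get the superlinear gain in the level-set inequality. Your proposed mechanism for (i), ``pairing the weighted Abreu equation against $f_k$ and invoking Cauchy--Schwarz,'' does not give that quantity: pairing $-\frac{1}{p}(p\,u^{rs})_{rs}$ with $f_k=(u-k)^+$ and integrating by parts twice produces the trace term $\int_{A_k}u^{rs}u_{rs}\,p\,d\mu=2\int_{A_k}p\,d\mu$ plus boundary and level-set terms, while integrating by parts once leaves $\int_{A_k}(p\,u^{rs})_s u_r\,d\mu$, which involves first derivatives of $u^{rs}$ that the Calabi energy does not control. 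For (ii), $\Delta_t u=u^{ij}u_{ij}+(u^{ij})_iu_j+p^{-1}p_iu^{ij}u_j$ contains the term $(u^{ij})_iu_j$ with no sign or size control, so the analogue of the standard inequality $\Delta_t\varphi\le n$ is simply absent on the symplectic side. Moreover, the degeneracy of the fibre metric normal to each facet --- exactly where the bound is wanted --- means the Sobolev inequality of \autoref{thm_proj_Sob}, whose gradient term is the degenerate one, does not by itself control oscillation up to $\partial P$; you acknowledge this (``facet by facet against the Guillemin model'') but only name the fix without performing it. A smaller point: in Step~2 the monotonicity of the Mabuchi energy does not bound $\int_{\partial P}u(t)\,d\sigma$, since the term $-\int_P\log\det(D^2u)\,p\,d\mu$ has no favorable sign; the correct source of that bound is the Calabi-energy-derivative identity of \autoref{difference_Calabi_energy} and the corollary following it, already proved in Section~3, which you could simply cite.

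For comparison, the paper sidesteps the polytope-side iteration entirely: from \autoref{l2-estimate} it gets a uniform lower bound on $\min_{\bar P}u(t)$ (Lemma~3.3 of \cite{H1}), converts this into a uniform upper bound on $\max_X\varphi_{\CC\PP^2}(t)$ (Proposition~3.4 of \cite{H1}), obtains a lower bound on that maximum (Corollary~3.7 of \cite{H1}), then derives $L^1$ bounds for $\varphi_{\CC\PP^2}(t)$, and finally applies Propositions~5.1 and~5.2 of \cite{H1} together with \autoref{Sob} to run the Sobolev/Moser argument on the complex side, where the needed differential inequalities for the K\"ahler potential are available; the resulting bound on $\|\varphi_{\CC\PP^2}(t)\|_{L^\infty}$ controls $\|u(t)-u(0)\|_{L^\infty}$. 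To salvage your route you would have to supply (i) and (ii) near $\partial P$, which amounts to redoing the hard part of \cite{H1} in degenerate coordinates.
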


{\bf Acknowledgement: } Our paper was initiated by the conversations with Professor Vestislav Apostolov during the time that the author was a postdoc in CIRGET, Montreal, Canada. The author would like to thank him for consistent support and encouragement. We also wish to thank Professor Paul Gauduchon for his interest in this paper.

\section{Notation and setup}

Let $(X, J, \omega)$ be a K\"ahler manifold with an integrable complex structure $J$ and a symplectic form $\omega$. Let $n$ be the complex dimension of $X$. In local holomorphic coordinates system
\begin{align*}
z_i, ~ i = 1, \ldots, n,
\end{align*}
by the $\partial \bar{\partial}$-lemma, we can write $\omega$ as
\begin{align*}
  \omega = \sqrt{-1} \frac{\partial^2 \varphi}{\partial z_i \partial \bar{z}_j} d z_i \wedge d \bar{z}_j,
\end{align*}
where $\varphi \in C^\infty(X)$. The K\"ahler metric is

\begin{align*}
  g = g_{i\bar{j}} d z_i \otimes d \bar{z}_j = \frac{\partial^2 \varphi}{\partial z_i \partial \bar{z}_j} d z_i \otimes d \bar{z}_j.
\end{align*}

The bisectional curvature can be expressed as 

\begin{align*}
  Rm_{i\bar{j}k\bar{l}} = - \frac{\partial^2 g_{i\bar{j}}}{\partial z_k \partial \bar{z}_l} + g^{s\bar{t}} \frac{\partial g_{s\bar{j}}}{\partial \bar{z}_l} \frac{\partial g_{i \bar{t}}}{\partial z_k},
\end{align*}

and the Ricci curvature and the scalar curvature are

\begin{align*}
  Ric_{i \bar{j}} =& - \frac{\partial^2}{\partial z_i \bar{\partial z}_j} \log (\det (g_{k\bar{l}})), \\
  R =& - 2 \triangle \log (\det (g_{i \bar{j}}).
\end{align*}

The Ricci form is defined as 
\begin{align*}
  \rho = \sqrt{-1} \partial \bar{\partial} \log (\det (g_{k \bar{l}}))
\end{align*}

Since the first Chern class $c_1(X)$ can be represented by $\frac{\rho}{2 \pi}$, the average of the scalar curvature is 
\begin{align*}
  \underline{R} = \frac{\int_{X} 2 \rho \wedge \frac{\omega^{n-1}}{(n-1)!}}{\int_X \frac{\omega^n}{n!}} = 4 \pi n \frac{c_1(X) \cdot \omega^{n-1}}{\omega^n}.
\end{align*}

We denote the set of {\it relative} K\"ahler potential as

\begin{align*}
   \mathcal{H} = \{ \psi \in C^\infty(X) ~|~ \omega_\psi = \omega + \sqrt{-1} \partial \bar{\partial} \psi > 0\}.
\end{align*}

The Calabi flow starting from $\psi$ will be a one parameter of {\it relative} K\"ahler potential $\psi(t)$ satisfying $\psi(0) = \psi$ and

\begin{align*}
  \frac{\partial \psi(t)}{\partial t} = R(\psi(t)) - \underline{R}.
\end{align*}

\subsection{Toric geometry}

Let $(X, J, \omega)$ be a toric manifold with a Delzant polytope $P$. Let $z$ be the moment map, i.e.,

\begin{align*}
  z : X \to \RR^n
\end{align*}
and $z(X) = P$. The preimage of $P^0$, i.e., the interior of $P$, is the $(C^*)^n$ orbit in $X$. We can write local holomorphic coordinates $\eta_i$ as

\begin{align*}
  \eta_i = \xi_i + \sqrt{-1} t_i, ~ \xi_i \in \RR, ~ t_i \in [0,2 \pi], \quad i = 1, \ldots, n
\end{align*}

such that the $T^n$-action is the translation of the $t_i$ variable. Moreover, we can express the toric invariant metric $g$ as follows:

\begin{align*}
  g = \frac{1}{2} \frac{\partial^2 \varphi}{\partial \xi_i \partial \xi_j} d z_i \otimes d \bar{z}_j,
\end{align*}

where $\varphi$ is a smooth, strictly convex function on $\RR^n$. Then the moment map $z(\eta)$ is

\begin{align*}
  (x_1, \ldots, x_n ) := z(\eta_1, \ldots, \eta_n) = \left( \frac{\partial \varphi}{\partial \xi_1}, \ldots, \frac{\partial \varphi}{\partial \xi_n} \right).
\end{align*}

By the Legendre transform, we obtain the symplectic potential as 

\begin{align*}
  u(x_1, \ldots x_n) = \sum_{i=1}^n x_i \xi_i - \varphi(\xi_1, \ldots, \xi_n).
\end{align*}

We can express the K\"ahler metric $g$ in the symplectic coordinates as follows:

\begin{align*}
  g = u_{ij} d x_1 d x_j + u^{ij} d t_i d t_j,
\end{align*}
where
\begin{align*}
  \left(u_{ij} \right) = \left( \frac{\partial^2 u}{\partial x_i \partial x_j} \right), \quad \left( u^{ij} \right) = \left( u_{ij} \right)^{-1}. 
\end{align*}

Then the volume of $X$ is 

\begin{align*}
  \frac{(2 \pi)^n}{n!} \int_P ~ d \mu,
\end{align*}
where $d \mu$ is the standard Lebesgue measure in $\RR^n$.\\

Recall that the scalar curvature of $g$ in the complex coordinate is

\begin{align*}
  R = - \sum_{i,j=1}^n \varphi^{ij} \left( \log \det (\varphi_{kl}) \right)_{ij},
\end{align*}
where
\begin{align*}
  \left( \varphi_{ij} \right) = \left( \frac{\partial^2 \varphi}{\partial \xi_i \partial \xi_j} \right), \quad \left( \varphi^{ij} \right) = \left( \frac{\partial^2 \varphi}{\partial \xi_i \partial \xi_j} \right)^{-1}. 
\end{align*}

Following Abreu's calculations \cite{A1}, we obtain the expression of the scalar curvature in the symplectic coordinates:

\begin{align*}
  R = - \sum_{i,j=1}^n u^{ij}_{~ij}.
\end{align*}

Also, the norm of the bisectional curvature is

\begin{align*}
  |Rm|^2 = \frac{1}{4} \varphi^{ii'} \varphi^{jj'} \varphi^{kk'} \varphi^{ll'} \left( - \varphi_{ijkl} + \varphi^{st} \varphi_{sjl} \varphi_{itk} \right) \left( - \varphi_{i'j'k'l'} + \varphi^{st} \varphi_{sj'l'} \varphi_{ti'k'} \right).
\end{align*}

Direct calculations show that (e.g. \cite{D2}) in the symplectic coordinates

\begin{align*}
  |Rm|^2 = \frac{1}{4} \sum_{i,j,k,l=1}^n u^{ij}_{~kl} u^{kl}_{~ij}.
\end{align*}

Note that $u$ also satisfies the Guillemin boundary condition. We recall that $P$ satisfies the following condition:

\begin{itemize}
  \item For any facet $P_i$ of $P$, there exists a unique inward primitive normal vector $\vec{v}_i$.

  \item For any vertex $v$, there exists exactly $n$ facets $P_{i_1}, \ldots, P_{i_n}$ meeting at $v$ and $\vec{v}_{i_k}, ~ k = 1, \ldots, n$ form a basis of $\ZZ^n$.
\end{itemize}

Suppose P has $d$-facets, then for each facet $P_i$, we choose a constant $c_i$ such that $l_i(x) = \langle x, \vec{v}_i \rangle + c_i$ vanishes on $P_i.$ We say that $u$ satisfies the Guillemin boundary conditions if

\begin{itemize}
  \item $u$ extends as a continuous function over $\partial P$ and its restriction to the interior of each face of $P$ is smooth and strictly convex. 

  \item 
    
    \begin{align*}
      u(x) = \frac{1}{2} \sum_{i=1}^d l_i(x) \ln(l_i(x)) + f(x),
    \end{align*}
    where $f(x)$ is a smooth function on $\bar{P}$.
\end{itemize}

Let $\mathcal{H}_s$ be the set of symplectic potentials, i.e.,

\begin{align*}
  \mathcal{H}_s = & ~ \{u ~ \mathrm{is ~ a ~ smooth, ~ strictly ~ convex ~ function ~ on ~} P \\
  & ~ \mathrm{and} ~ u ~ \mathrm{satisfies ~ the ~ Guillemin ~ boundary ~ conditions.}\}
\end{align*}

By the Legendre transform, a one parameter family $u(t)$ satisfies the Calabi flow equation if

\begin{align*}
  \frac{\partial u(t)}{\partial t} = \underline{R} - R(u(t)).
\end{align*}

\subsection{Projective bundles and the general Calabi constructions}

Let $(X, J) = \PP(O \oplus L_1 \oplus \cdots \oplus L_l) \to S$, where $L_i, ~ 1 \le i \le l$ are Hermitian line bundles over a cscK manifold $(S, \omega_S)$. Apostolov, Calderbank, Gauduchon and T\o nnesen-Friedman \cite{ACGT} constructed the admissible metrics on $(X, J)$ which is a generalization of the Calabi construction. In order to do that, they require the toric action $\TT^l$ to be rigid:

\begin{Def}
Let $g$ be a $\TT^l$-invariant Kaehler metric on $(X, J)$. Let $z : X \to \mathfrak{t}^*$ be the moment map of $\TT^l$, where $\mathfrak{t}$ is the Lie algebra of $\TT^l$. We say $\TT^l$ is a rigid toric action if for any $x \in X$, $\mathfrak{i}_x^* g$ depends only on $z(x)$, where
$
\mathfrak{i}: \mathbb{T} \rightarrow \mathbb{T} \cdot x \subset X
$
is the orbit map.
\end{Def}

Let the image of $X$ under the moment map $z$ be $P$ which is a Delzant polytope. Let $P^0$ be the interior of $P$. Let $\theta_i$ be a connection on $L_i$ such that its curvature is $p_i \omega_S$. Then on $z^{-1}(P^0)$, one can express the generalized Calabi metric (or the admissible metric) $g$ in the following way:

\begin{equation}
\label{cm}
g = ( \langle p_S, z \rangle + c_S) g_S +  \langle dz, G, dz \rangle + \langle \theta, H, \theta \rangle,
\end{equation}

where $p_S =  (p_1, \ldots, p_l ) \in \RR^l, ~ \theta = (\theta_1, \ldots, \theta_l), ~ G = Hess(u) = H^{-1},~ u$ is a symplectic potential of $(\CC \PP^l, P)$. $C_S$ is a constant parametrizing the Kaehler class and it satisfies $p(z) > 0$ on $\bar{P}$.

The scalar curvature of $u$ is :
$$
R(u) = \frac{Scal_S }{\langle p_S, z \rangle + c_S} 	 - \frac{1}{p(z)} \sum_{r,s=1}^l \frac{\partial^2}{\partial z_r \partial z_s} (p(z)u^{rs}),
$$
where $p(z)=(\langle p_\Sigma, z \rangle + c_\Sigma)^m, ~ m = \dim_{\CC} S $. 

The volume form is
$$
\omega^n = p(z) \omega_S^m \wedge \langle dz \wedge \theta \rangle^l,
$$
where $n = m + l$ is the dimension of $X$.

Let $u(t), t \in [0, T)$ be a one parameter family of symplectic potentials on $P$ satisfying the Calabi flow equation, i.e.,
$$
\frac{\partial u(t)}{\partial t} = \underline{R} - R(u).
$$ 

Note that the Calabi flow preserves the admissible K\"ahler class:

\begin{prop}
  Let $\varphi(t), t \in [0, T)$ be the Calabi flow starting from an admissible metric $\omega$ with $\varphi(0) = 0$, then $\omega(t) = \omega+ \sqrt{-1}\partial \bar{\partial} \varphi(t)$ is also an admissible metric for $t \in [0, T)$.
\end{prop}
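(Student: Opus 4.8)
The plan is to reduce the Calabi flow on $(X,J)$ to the flow $\partial_t u = \underline R - R(u)$ on symplectic potentials of $(\CC\PP^l,P)$ and then to use the uniqueness of solutions of the Calabi flow. The first step is to check that $\varphi(t)$ stays $\TT^l$-invariant. Since $\omega$ is $\TT^l$-invariant and each $\tau\in\TT^l$ acts on $(X,J)$ by biholomorphisms, $\tau^*\varphi(t)$ solves the same Calabi flow equation with the same initial datum $\tau^*\varphi(0)=\varphi(0)=0$; as the Calabi flow is a quasilinear fourth-order parabolic equation with unique short-time solutions, this uniqueness propagates along $[0,T)$ by a standard open--closed argument, so $\tau^*\varphi(t)=\varphi(t)$.

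Next I would use the structure of \cite{ACGT}: among $\TT^l$-invariant metrics cohomologous to $\omega$, admissibility is equivalent to rigidity of the $\TT^l$-action, which amounts to saying that the relative Kähler potential is the pullback under the moment map $z$ of a function on $P$. Concretely, each symplectic potential $u\in\mathcal H_s$ produces, via \eqref{cm}, an admissible metric $\omega_u$ whose relative potential $\varphi_u$ depends only on $z$, with $\varphi_{u_0}=0$ for the symplectic potential $u_0$ of $\omega$, and every admissible metric in the class arises this way. By the scalar-curvature formula recalled above, $R(\omega_u)=R(u)\circ z$ is again a function of $z$ only, and $\underline R$ is a cohomological constant, so the Calabi flow vector field $\varphi\mapsto R(\omega_\varphi)-\underline R$ maps admissible potentials to pullbacks via $z$. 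Combined with the Legendre identity $\partial_t\varphi_{u(t)}=-(\partial_t u(t))\circ z$ (valid at moment-map-corresponding points exactly as in the toric case, where $u+\varphi=\langle x,\xi\rangle$ forces $\dot u+\dot\varphi=0$), a solution $u(t)$ of the reduced flow $\partial_t u=\underline R-R(u)$ with $u(0)=u_0$ yields $\partial_t\varphi_{u(t)}=\big(R(u(t))-\underline R\big)\circ z=R(\omega_{u(t)})-\underline R$, i.e. $\omega_{u(t)}$ solves the Calabi flow on $X$. The reduced flow, being the restriction of the Calabi flow to the (invariant) family of admissible metrics, has a short-time solution, so on some $[0,\epsilon)$ the admissible family $\omega_{u(t)}$ is a Calabi flow with $\varphi(0)=0$, whence $\omega(t)=\omega_{u(t)}$ there by uniqueness.

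Finally I would patch this to all of $[0,T)$: restarting the argument from any $t_0$ at which $\omega(t_0)$ is admissible shows that $\{t\in[0,T):\omega(t)\text{ is admissible}\}$ is open, and it is nonempty ($t=0$) and closed (a limit of admissible metrics with uniformly controlled symplectic potentials is admissible, using the Guillemin boundary conditions), hence it is all of $[0,T)$. The main obstacle I anticipate is precisely this patching, namely ruling out that the reduced symplectic-potential flow has a strictly shorter lifespan than $\varphi(t)$; this is exactly what the open--closed continuation argument is designed to handle, and it rests on showing that admissibility is preserved under limits. A secondary technical point, which I would isolate as a preliminary lemma, is the precise correspondence $u\mapsto\varphi_u$ between symplectic potentials on $P$ and admissible relative potentials on $X$, together with the identity $R(\omega_u)=R(u)\circ z$; these are implicit in \cite{ACGT} but should be made explicit before the proof.
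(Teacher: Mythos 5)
Your overall architecture (reduce to the admissible family, solve the flow there for a short time, identify it with the actual flow by uniqueness, then run an open--closed/restart argument on $[0,T)$) is the same skeleton that underlies the paper's proof, and your first step (preservation of $\TT^l$-invariance via uniqueness) and the correspondence $u\mapsto\varphi_u$, $R(\omega_u)=R(u)\circ z$ are fine as far as they go. But there is a genuine gap at the crux: you assert that ``the reduced flow, being the restriction of the Calabi flow to the (invariant) family of admissible metrics, has a short-time solution.'' This is exactly the statement that needs to be proved, and the justification offered is circular: a priori the Calabi flow on $X$ starting from admissible data need not stay in the admissible family, so there is no ``restriction'' available to quote, and general short-time existence on $X$ plus uniqueness does not manufacture a solution lying in the family. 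Nor can you simply cite toric short-time existence on $\CC\PP^l$: the reduced equation is not the Calabi flow of the fiber but a weighted Abreu-type fourth-order flow, with weight $p(z)$ and the extra term $\frac{Scal_S}{\langle p_S,z\rangle+c_S}$, so its parabolic well-posedness in the class of symplectic potentials (equivalently, of potentials pulled back from $P$) requires an argument.

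This missing analytic step is precisely what the paper's proof supplies. It adapts Chen--He's short-time existence theorem (Theorem 3.2 of \cite{ChenHe}) to the Banach spaces $c^{k,\alpha}(\CC\PP^l)$ of $\TT^l$-invariant H\"older functions on the fiber, writes the flow as a quasilinear perturbation via $f(\varphi)=R(\omega_\varphi)+A_{g_\varphi}(\varphi)$ with $A_g(\varphi)=2g^{i\bar j}g^{k\bar l}\nabla_i\nabla_{\bar j}\nabla_k\nabla_{\bar l}\varphi$, and verifies the Lipschitz estimates needed by the abstract machinery; the key geometric input making this work is that $\CC\PP^l$ is totally geodesic in $(X,\omega)$, which lets one compare $C^{k,\alpha}$ norms on $X$ with those on the fiber for functions pulled back through the moment map, so that the existence problem genuinely closes up in the space of admissible potentials. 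To repair your proposal you would need to either reproduce this Banach-space existence argument (or an equivalent well-posedness result for the weighted Abreu flow on $P$ with Guillemin boundary conditions), after which your uniqueness and restart steps go through essentially as you describe.
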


\begin{proof}
  To prove that the Calabi flow preserves the admissible K\"ahler class, we need to adapt the proof of Theorem 3.2 in \cite{ChenHe} to the appropriate Banach spaces. 
  
  Let $E_0 = c^{0, \alpha}(\CC \PP^l)$ be the closure of the $\TT^l$-invariant smooth functions in $\CC \PP^l$ under the $C^{0,\alpha}$ H\"older norm, using the standard Fubini-Study metric. Also we let $E_1 = c^{4,\alpha}(\CC \PP^l)$, $E_\theta = c^{3,\alpha}(\CC \PP^l)$ and
  \begin{align*}
    V_\theta = \{ \varphi ~|~ \varphi \in c^{3,\alpha}(\CC \PP^l), ~ \lambda \omega < \omega(\varphi) < \Lambda \omega, ~ |\varphi|_{c^{3,\alpha}(\CC \PP^l)} < K \},
  \end{align*}
  where $\theta = \frac{3}{4}, \lambda, \Lambda, K$ are some positive constants and $\omega(\varphi) = \omega + \sqrt{-1} \partial \bar{\partial} \varphi$. 

Let $g$ be the Riemannian metric corresponding to $\omega$. Let $f$ be any $\TT^l$ invariant function on $\CC \PP^l$. Then $f$ can be seen as a function on $X$ through the moment map. We denote

  \begin{align*}
    A_g(\varphi) = 2 g^{i\bar{j}}g^{k\bar{l}} \nabla_i \nabla_{\bar{j}} \nabla_k \nabla_{\bar{l}} \varphi.
  \end{align*}

  One can check that
  \begin{align*}
    A : E_1 \to E_0.
  \end{align*}
  Now we let $f(\varphi) = R(\omega_\varphi) + A_{g_\varphi}(\varphi).$ In order to complete our proof, we only need to show that for any $\varphi, \psi \in V_\theta, ~ w \in E_1$, we have
  \begin{align*}
    |f(\varphi) - f(\psi)|_{c^{0,\alpha}(\CC \PP^l)} \le & C |\varphi - \psi|_{c^{3,\alpha}(\CC \PP^l)}, \\
    |A_{\varphi}(w) - A_{\psi}(w)|_{c^{0,\alpha}(\CC \PP^l)} \le & C |\varphi - \psi|_{c^{3, \alpha}(\CC \PP^l)} |w|_{c^{4,\alpha}(\CC \PP^l)}.
  \end{align*}

We denote the Banach spaces $c^{k, \alpha}(X)$ as the closure of the smooth functions on $X$ under the $C^{k, \alpha}$ norm of $\omega$. Since $\CC \PP^l$ is a totally geodesic submanifold of $(X, \omega)$, then for any two points $x_1, x_2 \in P$ such that $x_1$ is within the cut locus of $x_2$ in $X$, we have $d_{X}(x_1, x_2) = d_{\CC \PP^l}(x_1, x_2)$, where $d_X, d_{\CC \PP^l}$ are the distance functions with respect to $\omega$ and $\omega |_{\CC \PP^l}$ respectively. It implies that for $f \in C^\infty(\CC \PP^l)$, we have $\| f \|_{c^{0, \alpha}(X)} = \| f \|_{c^{0, \alpha}(\CC \PP^l)}$. Again, the fact that $\CC \PP^l$ is a totally geodesic submanifold of $(X, \omega)$ help us control $\| f \|_{C^k(X)}$ by $\| f \|_{C^k(\CC \PP^l)}$ for $k \le 4$ and vice versa. Similarly, we can control the $C^{k, \alpha}(X)$ norm of $f$ by $C^{k, \alpha}(\CC \PP^l)$ norm of $f$ for $k \le 4$ and vice versa.

  The rest of the proof is identical to the proof of Theorem 3.2 in cite.
\end{proof}

\section{Interior estimates}

In this section, we will obtain the interior estimates of the Calabi flow $u(t), t \in [0, T)$ on $(X, J) = \PP(O \oplus L_1 \oplus L_2) \to S$, where $(S, \omega_S)$ is a cscK manifold. Our techniques are based on the earlier work \cite{CHS}. Let $P_\epsilon$ be the set of points in $P$ whose Euclidean distance to $\partial P$ is at least $\epsilon$. Our first result is the following:

\begin{prop}
\label{distance}
Let $u(t), t \in [0, T)$ be a Calabi flow on $(X, J)$. For any $\epsilon > 0$, there exists a constant $C(\epsilon) > 0$ independent of $t$ such that 
$$
d_t (P_\epsilon, P_{2 \epsilon}) > C(\epsilon)
$$
for any $t \in [0, T)$, where $d_t$ denotes the Riemannian distance of the metric $u(t)_{ij} d z_i \otimes d z_j$ on $P$.
\end{prop}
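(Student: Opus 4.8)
The plan is to strip away everything specific to the admissible picture, reducing to a statement about a single convex function on the fixed polytope $P$ carrying a uniform $L^2$ bound on its scalar curvature, and then to invoke the toric--surface interior machinery of \cite{CHS}. First, a \emph{uniform $L^2$ bound for $R(u(t))$}: as long as the Calabi flow exists its Calabi energy is non-increasing, and for admissible metrics this reads $\frac{d}{dt}\int_P (R(u(t))-\underline{R})^2\, p(z)\, d\mu \le 0$. Since $\int_P p(z)\,d\mu$ and $\int_P R(u(t))\,p(z)\,d\mu$ are invariants of the (fixed) Kähler class, it follows that $\int_P R(u(t))^2\, p(z)\, d\mu \le C_0$ for all $t\in[0,T)$. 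The weight $p(z)=(\langle p_S,z\rangle+c_S)^m$ is continuous and, by the choice of $C_S$, strictly positive on the compact set $\bar P$, hence pinched between two positive constants; therefore $\int_P R(u(t))^2\, d\mu \le C_1$ uniformly in $t$, with $C_1$ depending only on $u(0)$, $P$ and $p_S, c_S$.

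Next, a \emph{geometric reduction}. It suffices to prove that every $C^1$ curve in $\bar P$ joining $P_{2\epsilon}$ to $\bar P\setminus P_{3\epsilon/2}$ has length $\ge C(\epsilon)$ in the metric $G_t := (u(t)_{ij})\, dz_i\otimes dz_j$, since this bounds the width of the collar $P_\epsilon\setminus P_{2\epsilon}$ and hence $d_t(P_\epsilon, P_{2\epsilon})$ from below. Two observations make this an interior estimate: (i) $x\mapsto\operatorname{dist}_{\mathrm{Euc}}(x,\partial P)$ is $1$--Lipschitz, so any such curve has Euclidean length $\ge\epsilon/2$, and cutting it off at its first exit from $P_{3\epsilon/2}$ we may assume it lies in $P_{3\epsilon/2}\subset P_\epsilon$ with Euclidean length still $\ge\epsilon/2$; (ii) on $P_\epsilon$ the Guillemin logarithmic terms of $u(t)$ are harmless, so $G_t$ is an honest smooth metric on a fixed neighbourhood of $P_\epsilon$ and the competition takes place away from $\partial P$. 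Thus the proposition reduces to the purely convex--analytic claim: if $u$ is a symplectic potential on $P$ with $\int_P R(u)^2\, d\mu\le C_1$, then every $C^1$ curve in $P_{3\epsilon/2}$ of Euclidean length $\ge\epsilon/2$ has $(D^2u)$--length $\ge C(\epsilon, C_1, P)>0$.

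For the \emph{convex--analytic core}, parametrize such a curve $\gamma$ by Euclidean arclength and set $H=(u^{ij})=(D^2u)^{-1}$. The pointwise inequality $1=|\dot\gamma|_{\mathrm{Euc}}\le\sqrt{\operatorname{tr}H(\gamma)}\,|\dot\gamma|_{D^2u}$ and Cauchy--Schwarz give $(\epsilon/2)^2\le \operatorname{Length}_{D^2u}(\gamma)\cdot\int_\gamma\operatorname{tr}H\, ds_{D^2u}$, so everything comes down to bounding the $(D^2u)$--line integral $\int_\gamma\operatorname{tr}H\, ds_{D^2u}$ from above, uniformly over competitors. This is where the curvature bound is indispensable — convexity of $u$ alone places no lower bound on $(D^2u)$--geodesic lengths — and it is the interior estimate of \cite{CHS}: one tests the identity $R(u)=-\sum_{i,j}\partial_i\partial_j u^{ij}$ against auxiliary functions adapted to $P$ and to the convexity of $u$, extracts from $\|R(u)\|_{L^2(P)}\le\sqrt{C_1}$ a uniform integral bound for the inverse metric $H$ on a neighbourhood of $P_\epsilon$, and promotes this bulk estimate to the required line--integral control along $G_t$--geodesics by a covering/co--area argument, again using the convexity of $u$. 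All constants so produced depend only on $\epsilon$, $P$, $p_S$, $c_S$, $u(0)$, hence are independent of $t$, which is the assertion. I expect this last step to be the main obstacle: passing from an $L^2$ bound on the fourth--order quantity $R(u)$ to a lower bound on geodesic lengths of the second--order metric $D^2u$ is delicate precisely because convexity contributes nothing, and the transition from bulk bounds to line integrals while keeping all constants $t$--independent is the technical heart of the argument.
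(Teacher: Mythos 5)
Your first step (uniform bound on $\int_P R(u(t))^2\,p(z)\,d\mu$ from monotonicity of the Calabi energy) is fine and is consistent with what the paper uses, and your Cauchy--Schwarz reduction $(\epsilon/2)^2 \le \mathrm{Length}_{D^2u}(\gamma)\cdot\int_\gamma \operatorname{tr}H\,ds_{D^2u}$ is correct as an inequality. But the proof then bottoms out exactly at the hard step, and that step is not supplied: you need an upper bound on the line integral $\int_\gamma \operatorname{tr}H\,ds_{D^2u}$, uniform over all competitor curves, and no bulk bound (neither $\|R(u)\|_{L^2(P)}$ nor even $\int_P \operatorname{tr}H\,d\mu$) controls a one-dimensional integral of $\operatorname{tr}H$ by itself, because nothing prevents $H=(D^2u)^{-1}$ from concentrating along a strip containing $\gamma$; convexity of $u$ gives no lower bound on $D^2u$ in the interior. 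Note moreover that $\int_\gamma \operatorname{tr}H\,ds_{D^2u}\le \mathrm{Length}_{D^2u}(\gamma)\cdot\sup_\gamma\operatorname{tr}H$, so the control you need is essentially a pointwise interior lower bound on $D^2u$ --- i.e.\ it is of the same strength as the interior estimates (Theorem \ref{interior}) that Proposition \ref{distance} is a preliminary step toward. The appeal to ``the interior estimate of \cite{CHS}'' plus a ``covering/co-area argument'' does not close this: there is no fixed-time lemma in \cite{CHS} asserting a lower bound on interior distances from an $L^2$ bound on the scalar curvature alone, with constants depending only on $\epsilon$, $P$ and $\|R(u)\|_{L^2}$; their distance estimate is proved for the flow and runs on additional, flow-derived inputs.

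This is also where your route genuinely diverges from the paper's. The paper does not reduce to a single-time statement about $\|R(u)\|_{L^2}$; it first establishes three uniformly controlled quantities and only then invokes the proof of Theorem 1.3 in \cite{CHS}, which is built to run on exactly these: (i) $\int_P u^2(t)\,d\mu\le C$, obtained from the distance-decreasing property of the Calabi flow (Calabi--Chen) applied to $u(t+T_0)-u(t)$ --- a genuinely flow-theoretic normalization bound that your argument never produces; (ii) $\int_P u^{ij}_{~kl}u^{kl}_{~ij}(t)\,d\mu\le C$, i.e.\ the full fiberwise curvature tensor in $L^2$ (not just the scalar curvature), obtained from the monotonicity of $\int_X|Rm|^2$ together with the fact that $\CC\PP^2$ is totally geodesic; and (iii) $\big|\int_P \operatorname{tr}(u^{ij}(t))\,d\mu\big|\le C$, which in the admissible setting the paper extracts by the time-integrated Calabi-energy identity (Lemma \ref{difference_Calabi_energy}) to control $\int_{\partial P}u(t)\,d\sigma$ and then the mixed traces $\int_P u_{ij}(0)u^{ij}(t)\,d\mu$. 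Your proposal drops (i) and (iii) and replaces the $|Rm|^2$ bound in (ii) by the weaker scalar-curvature bound, so even granting the CHS machinery you would not have its hypotheses. To repair the argument you should derive (i)--(iii) as the paper does and then cite the proof of Theorem 1.3 of \cite{CHS}, rather than attempting the self-contained convex-analytic claim, which as stated is unproven and quite possibly stronger than what is known.
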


To prove \autoref{distance}, we need to obtain some preliminary results.

\begin{lemma}
\label{l2-estimate}
There exists a constant $C > 0$ independent of $t$ such that for any $t \in [0, T)$, 
$$
\int_P u^2(t) ~ d \mu < C.
$$
\end{lemma}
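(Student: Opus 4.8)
The plan is to establish a differential inequality for $y(t):=\int_P u(t)^2\,p(z)\,d\mu$ — which controls $\int_P u(t)^2\,d\mu$, since $p(z)$ is bounded above and below by positive constants on $\bar P$ — in which the leading term on the right is a boundary integral with a favourable sign. Throughout I normalise, replacing each $u(t)$ by $u(t)$ plus a suitable affine function so that $u(t)\ge 0$ and $\min_P u(t)=0$; this is legitimate because $R(u)$, hence the flow equation, is unchanged by adding an affine function, and the resulting drift is lower order (the affine correction is essentially the constant $-\min_P u(t)$, whose time derivative is $R(u)-\underline R$ evaluated at the minimum point).

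Two elementary facts are used. First, since $\underline R$ is the $p\,d\mu$-average of $R(u)$, one has $\tfrac{d}{dt}\int_P u\,p\,d\mu=\int_P(\underline R-R(u))\,p\,d\mu=0$, so $I_0:=\int_P u(t)\,p\,d\mu$ is conserved; together with $u\ge 0$ and Cauchy--Schwarz this gives $\|u(t)\|_{L^1(P)}\le C\,y(t)^{1/2}$ and $\int_P u\,(\langle p_S,z\rangle+c_S)^{m-1}\,d\mu\le C\,y(t)^{1/2}$. Second, differentiating $y$ and substituting $R(u)\,p=Scal_S(\langle p_S,z\rangle+c_S)^{m-1}-\sum_{r,s}\partial_r\partial_s(p\,u^{rs})$, then integrating by parts twice in the manner of Abreu and Donaldson — this is where the Guillemin boundary conditions on $u$ enter, together with $\sum_{r,s}u^{rs}u_{rs}=l$ — one obtains
$$\int_P u\sum_{r,s}\partial_r\partial_s(p\,u^{rs})\,d\mu=l\int_P p\,d\mu-\int_{\partial P}u\,p\,d\sigma,$$
and hence
$$\frac{d}{dt}y=2\underline R\,I_0+2l\int_P p\,d\mu-2\,Scal_S\!\int_P u\,(\langle p_S,z\rangle+c_S)^{m-1}\,d\mu-2\int_{\partial P}u\,p\,d\sigma.$$
The first two terms are constants, and by the first fact the third is $\le C\,y(t)^{1/2}$ (and is $\le 0$ when $Scal_S\ge 0$), so everything reduces to bounding $\int_{\partial P}u\,p\,d\sigma$ from below.

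The crux — and the step I expect to be the main obstacle — is a trace-type inequality: for $u$ convex, non-negative with $\min u=0$, satisfying the Guillemin boundary conditions on the \emph{two-dimensional} Delzant polytope $P$, there are constants $c_*>0,\ C_*$ depending only on $P$ and $p$ with
$$\int_{\partial P}u\,p\,d\sigma\ \ge\ c_*\Big(\int_P u^2\,p\,d\mu\Big)^{1/2}-C_*.$$
I would prove this by foliating $P$ by line segments issuing from a fixed interior point, using one-dimensional convexity of $u$ along each segment together with the explicit $\tfrac12\sum_i l_i\ln l_i$ boundary behaviour to compare interior values with boundary values; the point that makes this a genuine square-root inequality is precisely that $\dim P=2$ (the analogous estimate fails when $\dim P\ge 3$, which is exactly why one restricts to $\CC\PP^2$-bundles, or more generally to toric-surface-bundles, and is consistent with the Remark after Theorem \ref{interior}).

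Granting the trace inequality, $\tfrac{d}{dt}y\le C'+(C''-2c_*)\,y^{1/2}$: when $Scal_S\ge 0$ one has $C''=0$, and when $Scal_S=-1$ one uses that the class parameter $C_S$ is large, so that $p(z)\approx C_S$ is nearly constant and $c_*$ dominates the constant $C''$ coming from the $Scal_S$-term; in either case $C''-2c_*<0$. Then $y'(t)<0$ whenever $y(t)>\big(C'/(2c_*-C'')\big)^2$, so $y(t)\le\max\{y(0),(C'/(2c_*-C''))^2\}$ for all $t\in[0,T)$, which is the assertion. An essentially equivalent route runs through the Mabuchi functional $\mathcal M(u)=-\int_P\log\det(\mathrm{Hess}\,u)\,p\,d\mu+\mathcal L(u)$: it is non-increasing along the Calabi flow, its entropy part is bounded below by a constant depending only on $P$, hence the linear part $\mathcal L(u)=\int_{\partial P}u\,p\,d\sigma-\int_P u\,\mathcal A(z)\,d\mu$ is bounded above; for $C_S$ large the combination of this bound with the conserved integral $I_0$ controls $\int_{\partial P}u\,p\,d\sigma$ and $\|u\|_{L^1}$, and the same trace inequality then upgrades this to the $L^2$ bound.
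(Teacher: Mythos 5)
Your differential-inequality strategy is genuinely different from the paper (which disposes of Lemma \ref{l2-estimate} softly: the Calabi flow is distance-decreasing in the Mabuchi metric by Calabi--Chen, so $\int_P(u(t+T_0)-u(t))^2\,p\,d\mu$ is monotone and the bound follows by comparison with the time-shifted flow), but as written it has two real gaps. The first is the step you yourself flag as the crux: the trace inequality $\int_{\partial P}u\,p\,d\sigma\ge c_*\bigl(\int_P u^2\,p\,d\mu\bigr)^{1/2}-C_*$. Your sketched proof (foliate $P$ by segments from an interior point and compare interior with boundary values by one-dimensional convexity) only yields $\int_P u^2\,d\mu\le C\int_{\partial P}u^2\,d\sigma$, i.e.\ control by the boundary $L^2$ norm, not by $\int_{\partial P}u\,d\sigma$; since the maximum of a convex function sits at a vertex, the passage from $L^2(\partial P)$ to $L^1(\partial P)$ is exactly the borderline issue in dimension two (a naive estimate picks up a factor $\log(\max u)$ near the vertices, and ruling it out requires a separate dyadic/edge-convexity argument near each vertex, which is nowhere in your sketch). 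Note also that the inequality, if true, is a statement about normalized convex functions only --- the Guillemin boundary behaviour plays no role --- so invoking it does not explain the restriction to two-dimensional fibres; that restriction in the paper comes from the interior-estimate machinery, not from this lemma, which the paper proves in a way valid in any dimension.

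The second gap is the normalization. Your identity
\begin{align*}
\frac{d}{dt}\int_P u^2\,p\,d\mu=2\underline{R}\,I_0+2l\int_P p\,d\mu-2\,Scal_S\!\int_P u\,(\langle p_S,z\rangle+c_S)^{m-1}d\mu-2\int_{\partial P}u\,p\,d\sigma
\end{align*}
holds for the actual flow, but the favourable sign of the boundary term and the trace inequality both require $u\ge 0$, $\min u=0$, while the lemma is about the un-normalized $u(t)$ solving $\partial_t u=\underline R-R(u)$. If you normalize by subtracting $\min_P u(t)$ (or an affine function), the normalized family no longer satisfies the flow equation: the drift is $(R(u)-\underline R)$ evaluated at the minimum point, which is \emph{not} a priori bounded along the flow (there is no pointwise scalar-curvature control here), so it is not ``lower order''; and even a bound on the normalized $L^2$ norm does not bound $\int_P u(t)^2\,d\mu$ unless you separately control the subtracted affine part --- which is precisely the drift the lemma is meant to exclude and which the paper handles through the distance-decreasing theorem. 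Finally, your treatment of the case $Scal_S=-1$ requires $c_S$ large relative to the constants in the trace inequality, an assumption not present in Lemma \ref{l2-estimate} (which is stated for a general cscK base with any admissible $c_S$), and the fallback via the Mabuchi functional rests on the claim that the entropy term $-\int_P\log\det(D^2u)\,p\,d\mu$ is bounded below by a constant depending only on $P$, which is false: it is only bounded below by $-C\bigl(1+\int_{\partial P}u\,d\sigma\bigr)$, so that route is circular as stated.
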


\begin{proof}
Let $T_0 = T/2$. Since the Calabi flow decreases the distance \cite{CC}, one obtains that
$$
\int_P (u(t+T_0) - u(t))^2 ~ p(z) d \mu
$$
is decreasing for $t \in [0, T/2)$. Then there exists a constant $C > 0$ independent of $t$ such that
$$
\int_P u^2(t) ~ p(z) d \mu < C
$$
for all $t \in [0, T)$. Since $p(z) > 0$ on $\bar{P}$, there exists a constant $C > 0$ independent of $t$ such that
$$
\int_P u^2(t) ~ d \mu < C
$$
\end{proof}

Let $Rm$ be the Riemannian curvature of $X$ and $Rm_{\CC \PP}$ be the Riemannian curvature restricted on $\CC \PP^2$. Since $\CC \PP^2$ is a totally geodesic submanifold of $X$, one has $|Rm_{\CC \PP^2}|^2 (u) = \sum_{ijkl} u^{ij}_{~ kl} u^{kl}_{~ij} \le |Rm|^2(u)$. Since $\int_X |Rm|^2(t) ~ \omega^n(t)$ is decreases along the Calabi flow, there exists a constant $C > 0$ independent of $t$ such that
$$
\int_P \sum_{ijkl} \left(u^{ij}_{~kl} u^{kl}_{~ij}\right) (t)~ p(z) d \mu < C. 
$$
Thus one has 
\begin{lemma}
There exists a constant $C > 0$ independent of $t$ such that
$$
\int_P \sum_{ijkl} \left(u^{ij}_{~kl} u^{kl}_{~ij}\right) (t)~ d \mu < C, \quad \int_P \left(\sum_{ij} u^{ij}_{~ij} \right)^2 (t)~ d \mu < C,
$$
for all $t \in [0, T)$.
\end{lemma}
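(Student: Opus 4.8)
The plan is to read off both bounds from the estimate
$\int_P \big(\sum_{ijkl} u^{ij}_{~kl}u^{kl}_{~ij}\big)(t)\,p(z)\,d\mu < C$ already obtained just above the statement, which itself comes from the monotonicity of $\int_X |Rm|^2\omega^n$ along the Calabi flow, the volume form identity $\omega^n = p(z)\,\omega_S^m\wedge\langle dz\wedge\theta\rangle^l$, and the pointwise bound $|Rm_{\CC\PP^2}|^2(u)=\sum_{ijkl}u^{ij}_{~kl}u^{kl}_{~ij}\le |Rm|^2(u)$ coming from $\CC\PP^2$ being totally geodesic in $X$. The first inequality of the lemma is then immediate: since $p(z)>0$ on the compact set $\bar P$ there is $c_0>0$ with $p(z)\ge c_0$, and because $\sum_{ijkl}u^{ij}_{~kl}u^{kl}_{~ij}=|Rm_{\CC\PP^2}|^2(u)\ge 0$ pointwise,
$$
\int_P \Big(\sum_{ijkl} u^{ij}_{~kl}u^{kl}_{~ij}\Big)(t)\,d\mu \;\le\; c_0^{-1}\int_P \Big(\sum_{ijkl} u^{ij}_{~kl}u^{kl}_{~ij}\Big)(t)\,p(z)\,d\mu \;<\; C,
$$
exactly the device used in \autoref{l2-estimate}.

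For the second inequality I would compare, pointwise on $P$, the scalar curvature of the toric metric $g = u_{ij}\,dx_i\,dx_j + u^{ij}\,dt_i\,dt_j$ on $\CC\PP^2$ with the full norm of its Riemann tensor. By Abreu's formulas the scalar curvature is $R_{\CC\PP^2}(u) = -\sum_{ij} u^{ij}_{~ij}$, while $|Rm_{\CC\PP^2}|^2(u)=\sum_{ijkl}u^{ij}_{~kl}u^{kl}_{~ij}$ (up to a fixed normalization). Since the scalar curvature is obtained from the Riemann tensor by two $g$-traces, the elementary Cauchy--Schwarz estimate for a double trace in a $g$-orthonormal frame yields a dimensional constant $c$ with
$$
\Big(\sum_{ij} u^{ij}_{~ij}\Big)^2 \;\le\; c\sum_{ijkl}u^{ij}_{~kl}u^{kl}_{~ij}
$$
at every point of $P$. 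Integrating over $P$ and applying the first inequality of the lemma gives $\int_P\big(\sum_{ij}u^{ij}_{~ij}\big)^2(t)\,d\mu < C$, as desired.

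I do not expect a genuine obstacle here; the only point requiring care is that the pointwise inequality $R^2\le c\,|Rm|^2$ be applied with the norms as they actually occur in the symplectic-coordinate formulas — the Riemannian norm of the curvature with respect to $g$ — which is precisely the setting in which the double-trace Cauchy--Schwarz bound holds literally, with $c$ depending only on the (real) dimension of $\CC\PP^2$. An alternative to the second step would be to bound $\int_X R^2\omega^n$ via the decreasing Calabi energy $\int_X(R-\underline R)^2\omega^n$ and then expand $R(u)=\frac{Scal_S}{\langle p_S,z\rangle+c_S}-\frac1{p(z)}\sum_{rs}\partial_r\partial_s(p(z)u^{rs})$; but this forces one to absorb the lower-order terms $(\partial p)(\partial u^{rs})$ and $(\partial^2 p)u^{rs}$, so the curvature-contraction route above is the cleaner one.
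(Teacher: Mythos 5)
Your proposal is correct and follows essentially the same route as the paper, which leaves this step implicit ("Thus one has"): the first bound comes from $p(z)\ge c_0>0$ on $\bar P$ applied to the weighted estimate $\int_P \big(\sum_{ijkl}u^{ij}_{~kl}u^{kl}_{~ij}\big)p(z)\,d\mu<C$, and the second from the pointwise identification $-\sum_{ij}u^{ij}_{~ij}=R_{\CC\PP^2}$, $\sum_{ijkl}u^{ij}_{~kl}u^{kl}_{~ij}=c\,|Rm_{\CC\PP^2}|^2$ together with the dimensional Cauchy--Schwarz bound $R^2\le c\,|Rm|^2$. Your remark that the alternative via the Calabi energy would require absorbing the $p(z)$-derivative terms is also accurate, and the curvature-contraction argument you give is indeed the cleaner completion.
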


Similar to the calculations in \cite{CHS}, one obtains
\begin{cor}
There exists a constant $C > 0$ independent of $t \in [0, T)$ such that
\begin{align*}
\bigg| \int_P u_{ij}(0) u^{ij}(t) ~ d\mu \bigg| \le C + 2 \bigg| \int_{\partial P} (u(t) -u(0)) ~ d\sigma \bigg|,\\
\bigg| \int_P u_{ij}(t) u^{ij}(0) ~ d\mu \bigg| \le C + 2 \bigg| \int_{\partial P} (u(t) -u(0)) ~ d\sigma \bigg|.
\end{align*}
\end{cor}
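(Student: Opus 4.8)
The plan is to reduce both bounds to Donaldson's integration-by-parts identity on $P$, using the two $L^2$-estimates already obtained together with the structural fact that the Calabi flow keeps $u(t)$ inside $\mathcal H_s$ (by the Proposition above). In particular $u(t)$ and the fixed initial potential $u(0)$ have the same Guillemin singular part $\tfrac12\sum_k l_k\ln l_k$, so that $\phi_t:=u(t)-u(0)\in C^\infty(\bar P)$; all derivatives of $\phi_t$ are therefore bounded on $\bar P$, which is what makes the integrations by parts legitimate up to the boundary.

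For the first inequality I would write $u_{ij}(0)=u_{ij}(t)-(\phi_t)_{ij}$. Since $\sum_{i,j}u_{ij}(t)u^{ij}(t)=\mathrm{tr}\,I_2=2$ pointwise,
\[
\int_P\sum_{i,j}u_{ij}(0)\,u^{ij}(t)\,d\mu=2|P|-\int_P\sum_{i,j}(\phi_t)_{ij}\,u^{ij}(t)\,d\mu ,
\]
and I then integrate by parts twice in the last integral, moving both derivatives onto $u^{ij}(t)$. The boundary term from the first integration by parts vanishes because the Guillemin conditions force the normal components of $u^{ij}(t)$ to vanish on $\partial P$ (in facet-adapted coordinates $u^{11}=2l_k+O(l_k^2)$, $u^{1j}=O(l_k)$), while the standard Guillemin-boundary computation — as in \cite{CHS} — shows that the second integration by parts contributes exactly $2\int_{\partial P}\phi_t\,d\sigma$, the factor $2$ reflecting the normalization $\tfrac12 l_k\ln l_k$ in the boundary condition. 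Hence
\[
\int_P\sum_{i,j}(\phi_t)_{ij}\,u^{ij}(t)\,d\mu=\int_P\phi_t\,\Big(\sum_{i,j}u^{ij}_{~ij}(t)\Big)\,d\mu+2\int_{\partial P}\phi_t\,d\sigma .
\]
The interior term is $O(1)$ by Cauchy--Schwarz, since $\|\phi_t\|_{L^2}\le\|u(t)\|_{L^2}+\|u(0)\|_{L^2}\le C$ by \autoref{l2-estimate} ($u(0)$ being fixed) and $\|\sum_{i,j}u^{ij}_{~ij}(t)\|_{L^2}\le C$ by the preceding lemma; and $\int_{\partial P}\phi_t\,d\sigma=\int_{\partial P}(u(t)-u(0))\,d\sigma$. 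This gives the first estimate.

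For the second inequality one argues the same way but expanding around the fixed potential: using $u_{ij}(t)=u_{ij}(0)+(\phi_t)_{ij}$ and $\sum_{i,j}u_{ij}(0)u^{ij}(0)=2$, the same two integrations by parts yield
\[
\int_P\sum_{i,j}u_{ij}(t)\,u^{ij}(0)\,d\mu=2|P|+\int_P\phi_t\,\Big(\sum_{i,j}u^{ij}_{~ij}(0)\Big)\,d\mu+2\int_{\partial P}\phi_t\,d\sigma .
\]
Now $\sum_{i,j}u^{ij}_{~ij}(0)=-R$ for the metric $u(0)$, a bounded function on $P$, so the interior term is at most $C\,\|\phi_t\|_{L^1}\le C\,\|\phi_t\|_{L^2}\le C$, again by \autoref{l2-estimate}; the boundary term is once more $2\int_{\partial P}(u(t)-u(0))\,d\sigma$.

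The step I expect to require the most care is justifying the integration-by-parts identity up to $\partial P$ — that all boundary integrals converge, that $u^{ij}(t)$ degenerates in the normal direction on each facet, and that the conormal component of $\sum_j\partial_j u^{ij}(t)$ has limit $2$ on each facet, both near the relative interior of a facet and near the lower-dimensional faces where several $l_k$ vanish simultaneously. This is exactly the local Guillemin-boundary analysis already carried out in \cite{CHS}; granting it, everything else is Cauchy--Schwarz together with the two $L^2$-bounds in hand.
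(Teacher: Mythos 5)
Your argument is correct and is essentially the calculation the paper invokes by citing \cite{CHS}: write $u(t)-u(0)$ as a function smooth on $\bar P$, apply the Guillemin-boundary integration-by-parts identity twice (the factor $2$ coming from the $\tfrac12 l_k\ln l_k$ normalization), and bound the interior terms by Cauchy--Schwarz using \autoref{l2-estimate} and the $L^2$ bound on $\sum_{ij}u^{ij}_{~ij}(t)$. The only slight imprecision is that in the generalized Calabi setting $-\sum_{ij}u^{ij}_{~ij}(0)$ is the fiberwise rather than the total scalar curvature, but since all you use is that it is a fixed bounded function on $\bar P$, the proof stands as written.
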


To control $\bigg| \int_{\partial P} (u(t) -u(0)) ~ d\sigma \bigg|$, one needs to provide an explicit formula of the derivative of the Calabi energy. By direct calculations, one has

\begin{lemma}
\label{difference_Calabi_energy}
$$
\int_P \left( R^2(0) - R^2(t_0) \right) ~ p(z) d\mu = 2 \int_0^{t_0} \int_P  u^{ir}(t) u^{js}(t) R(t)_{ij} R(t)_{rs} ~ p(z) d \mu dt
$$
\end{lemma}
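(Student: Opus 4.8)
The plan is to treat the right-hand side as (twice) the time-integral of the derivative of the weighted Calabi energy $\mathcal{C}(t) := \int_P R^2(t)\, p(z)\, d\mu$, and to compute $\tfrac{d}{dt}\mathcal{C}(t)$ by linearizing the scalar-curvature formula along the flow. Since $p(z)$ depends only on the moment-map variable $z$ and not on $t$, one immediately has $\tfrac{d}{dt}\mathcal{C}(t) = 2\int_P R\,\dot R\, p(z)\, d\mu$, so the whole computation reduces to identifying $\dot R$.

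To get $\dot R$ I would differentiate
$$
R(u) = \frac{Scal_S}{\langle p_S, z\rangle + c_S} - \frac{1}{p(z)}\sum_{r,s=1}^l \partial_r\partial_s\!\big(p(z)\,u^{rs}\big)
$$
in $t$. The first term is independent of $u$ and drops out, and using $\dot u^{rs} = -u^{ra}u^{sb}\dot u_{ab}$ together with $\dot u_{ab} = \partial_a\partial_b(\underline R - R) = -R_{ab}$ (here $R_{ab}$ denotes the Hessian $\partial_a\partial_b R$, as in the rest of the paper) one obtains $\dot u^{rs} = u^{ra}u^{sb}R_{ab}$ and hence
$$
\dot R = -\frac{1}{p(z)}\sum_{r,s}\partial_r\partial_s\!\big(p(z)\,u^{ra}u^{sb}R_{ab}\big).
$$
Substituting this into $\tfrac{d}{dt}\mathcal{C}(t) = 2\int_P R\,\dot R\, p\, d\mu$ and integrating by parts twice in $P$ — moving the two derivatives off the divergence term and onto $R$ — yields the bulk term $-2\int_P p(z)\,u^{ir}u^{js}R_{ij}R_{rs}\, d\mu$ (after a harmless relabeling of indices, the expression being symmetric), plus boundary integrals over $\partial P$. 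Integrating from $0$ to $t_0$ and rearranging then gives precisely the claimed identity, provided the boundary contributions vanish.

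The only delicate point — and the main obstacle — is the vanishing of those boundary terms in the double integration by parts. Here I would invoke that $u(t) \in \mathcal H_s$ for all $t$, i.e.\ $u(t)$ satisfies the Guillemin boundary conditions along the flow (this follows from the Proposition that the Calabi flow preserves the admissible class, applied to the $\CC\PP^l$ factor), and that $R(t)$ is smooth up to $\bar P$ while $p(z)$ is smooth and strictly positive on $\bar P$, so the weight plays no role in the boundary analysis. The cancellation is then the standard one used in the toric case (Abreu, Donaldson, and \cite{CHS, D2}): on each facet $P_i$ with defining affine function $l_i$ and conormal $\vec v_i$, the cofactor combinations $p\,u^{ra}u^{sb}R_{ab}$ contracted against $\vec v_i$ carry a factor vanishing on $P_i$, which kills both the ``$R\cdot\partial_r(\cdots)$'' and the ``$\partial_s R\cdot(\cdots)$'' boundary terms, and the lower-dimensional strata of $\partial P$ contribute nothing by continuity. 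I would therefore carry out the two integrations by parts explicitly but refer to \cite{CHS, D2} for the boundary cancellations, noting that the presence of the smooth positive weight $p(z)$ does not affect them.
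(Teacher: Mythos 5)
Your proposal is correct and follows essentially the same route as the paper: differentiate the weighted Calabi energy $\int_P R^2(t)\,p(z)\,d\mu$, use the linearization $\dot R = -\tfrac{1}{p(z)}\partial_r\partial_s\bigl(p(z)\,u^{ir}u^{js}R_{ij}\bigr)$ coming from $\dot u = \underline R - R$, integrate by parts twice, and integrate in time. The paper's proof is just a terser version of this (it leaves the integration by parts and the vanishing of the boundary terms implicit), so your extra care with the Guillemin boundary conditions only fills in details the paper omits.
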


\begin{proof}
Our lemma follows from the following calculations:
\begin{align*}
& \frac{\partial}{\partial t} \int_P R^2(t) ~ p(z) d\mu \\
=& - 2 \int_P R(t) \left( \frac{\partial^2}{\partial z_r \partial z_s} \left( p(z) u^{ir}(t) R(t)_{ij} u^{sj}(t) \right)\right) d \mu\\
=& - 2 \int_P u^{ir}(t) u^{js}(t) R(t)_{ij} R(t)_{rs} ~ p(z) d \mu.
\end{align*}
\end{proof}

An immediate corollary is 

\begin{cor}
There exists a constant $C > 0$ independent of $t$ such that for any $t \in [0, T)$, one has
$$
\bigg| \int_{\partial P} u(t) ~ d \sigma \bigg| < C.
$$
\end{cor}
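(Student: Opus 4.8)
The plan is to read the estimate off the Donaldson-type integration-by-parts identity underlying the Corollary preceding \autoref{difference_Calabi_energy}, using only the non-increase of the Calabi energy and \autoref{l2-estimate}. That identity is: for any symplectic potential $v$ and any $f\in C^\infty(\bar P)$,
\[
2\int_{\partial P} f\, d\sigma = \int_P p(z)\sum_{r,s} v^{rs} f_{rs}\, d\mu + \int_P \big(R(v)-S_0(z)\big)\, f\, p(z)\, d\mu, \qquad S_0(z):=\frac{Scal_S}{\langle p_S,z\rangle+c_S},
\]
which follows from $\big(R(v)-S_0\big)p(z)=-\sum_{r,s}\partial_r\partial_s\big(p(z)v^{rs}\big)$ by integrating against $f$ twice by parts; the Guillemin boundary conditions, which force the first-order vanishing of $v^{rs}$ in the conormal direction at each facet, produce exactly the boundary term, the smooth positive weight $p$ being simply evaluated on $\partial P$ (as in Donaldson's computation and \cite{ACGT}).

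I would then apply this twice with the test function $f=u(t)-u(0)$, which is smooth on $\bar P$ since the Guillemin singular parts of $u(t)$ and $u(0)$ cancel. Taking $v=u(0)$ and then $v=u(t)$, and using $\sum_{r,s}v^{rs}v_{rs}=2$ (the polytope being $2$-dimensional), one gets
\[
2\int_{\partial P}\big(u(t)-u(0)\big)\,d\sigma = \int_P p\sum_{r,s}u^{rs}(0)u_{rs}(t)\,d\mu - 2\int_P p\,d\mu + \int_P\big(R(u(0))-S_0\big)\big(u(t)-u(0)\big)p\,d\mu,
\]
\[
2\int_{\partial P}\big(u(0)-u(t)\big)\,d\sigma = \int_P p\sum_{r,s}u^{rs}(t)u_{rs}(0)\,d\mu - 2\int_P p\,d\mu + \int_P\big(R(u(t))-S_0\big)\big(u(0)-u(t)\big)p\,d\mu.
\]
In each line the first term on the right is $\ge 0$: it equals $\int_P p\cdot\mathrm{tr}\big((D^2u(0))^{-1}D^2u(t)\big)\,d\mu$, resp. $\int_P p\cdot\mathrm{tr}\big((D^2u(t))^{-1}D^2u(0)\big)\,d\mu$, the integral against the positive weight $p$ of a trace of a product of positive-definite matrices. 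The second term is a fixed constant. The third is bounded, by Cauchy--Schwarz, by $\|R(v)-S_0\|_{L^2(p\,d\mu)}\,\|u(t)-u(0)\|_{L^2(p\,d\mu)}$: here $\|u(t)\|_{L^2(p\,d\mu)}$ is uniformly bounded by \autoref{l2-estimate}, the data $u(0),S_0,R(u(0))$ are fixed, and $\|R(u(t))\|_{L^2(p\,d\mu)}$ is uniformly bounded because the right-hand side of \autoref{difference_Calabi_energy} is $\ge 0$, so the Calabi energy $\int_P R^2(t)\,p\,d\mu$ is non-increasing. Dropping the non-negative bulk terms then gives $\int_{\partial P}(u(t)-u(0))\,d\sigma\ge -C$ from the first identity and $\int_{\partial P}(u(0)-u(t))\,d\sigma\ge -C$ from the second; since $\int_{\partial P}u(0)\,d\sigma$ is a fixed finite number, these two one-sided bounds combine to $\big|\int_{\partial P}u(t)\,d\sigma\big|<C$, uniformly in $t$.

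I do not expect a genuine obstacle. The only points requiring care are the verification of the weighted Donaldson identity near $\partial P$ — the conormal behaviour of $v^{rs}$ under the Guillemin conditions, which is a routine adaptation of the computation already behind the preceding Corollary — and the sign bookkeeping, i.e. making sure the manifestly non-negative bulk contributions always land on the favourable side of the inequality. Both of the genuinely new inputs, \autoref{l2-estimate} and the monotonicity of the Calabi energy, are already available, which is why this is an immediate corollary.
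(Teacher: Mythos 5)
Your fixed-time strategy is viable, but the key identity is misstated, and the misstatement is exactly what lets you land on the unweighted boundary integral for free. For admissible metrics the Donaldson-type integration by parts carries the weight on the boundary: for a symplectic potential $v$ and $f\in C^\infty(\bar P)$ one has $\int_P p\, v^{rs}f_{rs}\,d\mu=\int_P \partial_r\partial_s(p\,v^{rs})\,f\,d\mu+2\int_{\partial P} f\,p\,d\sigma$, because on a facet $\{l=0\}$ the Guillemin behaviour gives $v^{rs}\nu_s\to 0$ while $\partial_s(p\,v^{rs})\nu_r\to 2p$ (in the one-dimensional model $v^{11}=2x(1-x)$ one gets $(p\,v^{11})'(0)=2p(0)$, not $2$). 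So your two applications with $f=\pm(u(t)-u(0))$, after using positivity of $\int_P p\,\mathrm{tr}\bigl((D^2u(0))^{-1}D^2u(t)\bigr)\,d\mu$ and $\int_P p\,\mathrm{tr}\bigl((D^2u(t))^{-1}D^2u(0)\bigr)\,d\mu$, the $L^2$ bound of Lemma \ref{l2-estimate} and the monotonicity of $\int_P R^2(t)\,p\,d\mu$ coming from Lemma \ref{difference_Calabi_energy}, yield $\bigl|\int_{\partial P}u(t)\,p\,d\sigma\bigr|\le C$ rather than the unweighted bound claimed in the Corollary. This is a genuine gap as written: for a sign-changing $u(t)$ a bound on the $p$-weighted boundary integral does not by itself bound $\int_{\partial P}u(t)\,d\sigma$. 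The repair is the same step the paper uses to close its own argument: the uniform bound on $\int_P u^2(t)\,d\mu$ gives a uniform lower bound for $\min_{\bar P}u(t)$, so $u(t)+C'\ge 0$, and since $0<\min_{\bar P}p\le p\le\max_{\bar P}p$ one can trade the weighted bound for the unweighted one. You must add this step (and cite or prove the minimum bound); without it the proof does not reach the stated conclusion.

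Modulo that correction, your route is genuinely different from the paper's and worth noting. The paper works time-integrated: it uses the dissipation formula of Lemma \ref{difference_Calabi_energy} together with Cauchy--Schwarz and the integration by parts $\int_P u^{ij}(t)R_{ij}(t)\,p\,d\mu=\int_P(S_0-R(t))R(t)\,p\,d\mu-\int_{\partial P}R(t)\,p\,d\sigma$ to control $\int_0^{t_0}\int_{\partial P}R(t)\,p\,d\sigma\,dt$, and then integrates the flow equation on $\partial P$ to bound $\int_{\partial P}(u(t)-u(0))\,p\,d\sigma$, finishing with the same minimum-bound trick. Your argument instead bounds the boundary term at each fixed time, using only convexity (positivity of the trace terms), Lemma \ref{l2-estimate}, and the non-increase of the Calabi energy; it is closer in spirit to the identity already invoked in the Corollary preceding Lemma \ref{difference_Calabi_energy}, and it avoids any discussion of time integrals of boundary curvature terms, which is arguably cleaner. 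Once you insert the weighted boundary term and the final weight-removal step, the proof is complete.
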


\begin{proof}
Note that
\begin{align*}
& \int_P u^{ir}(t) u^{js}(t) R(t)_{ij} R(t)_{rs} ~ p(z) d \mu\\
\ge & C \int_P \left( u^{ij}(t) R(t)_{ij} \right)^2 ~ p(z) d \mu\\
\ge & C \left(\int_P u^{ij}(t) R(t)_{ij} ~ p(z) d \mu \right)^2\\
\end{align*}

Integrating by parts, one obtains

\begin{align*}
& \int_P u^{ij}(t) R(t)_{ij} ~ p(z) d \mu \\
=& \int_P \left( p(z) u^{ij}(t) \right)_{ij} R(t) ~ d \mu - \int_{\partial P} R(t) ~ p(z) d \sigma \\
=& \int_P \left( \frac{Scal_S}{\langle p_S, z \rangle + c_S} - R(t) \right) R(t) ~ p(z) d \mu  - \int_{\partial P} R(t) ~ p(z) d \sigma \\
\end{align*}

Thus we conclude that for any $t_0 \in [0, T)$, one has
$$
\bigg| \int_0^{t_0} \int_{\partial P} R(t) ~ p(z) d\sigma dt \bigg| < C.
$$
Hence
$$
\bigg| \int_{\partial P} \left( u(t) - u(0) \right) ~p (z) d \sigma \bigg| < C.
$$
The uniform control of $\int_P u^2(t) ~ d \mu$ for any $t \in [0, T)$ provides a uniform lower bound of the minimum of $u(t)$. So one can obtain
$$
\bigg| \int_{\partial P} u(t) d \sigma \bigg| < C,
$$
for any $t \in [0, T)$.
\end{proof}

Combing the above results, one obtains

\begin{cor}
There exists a constant $C > 0$ such that for any $t \in [0, T)$, one has
$$
\bigg| \int_P u_{ij}(0) u^{ij}(t) ~ d\mu \bigg| \le C,\quad \bigg| \int_P u_{ij}(t) u^{ij}(0) ~ d\mu \bigg| \le C.
$$
\end{cor}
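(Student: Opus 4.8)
The plan is to combine the two corollaries that immediately precede the statement. The first of them already supplies, for every $t \in [0,T)$, the inequality
$$
\bigg| \int_P u_{ij}(0) u^{ij}(t) ~ d\mu \bigg| \le C + 2 \bigg| \int_{\partial P} (u(t) -u(0)) ~ d\sigma \bigg|
$$
together with its twin obtained by interchanging the roles of $0$ and $t$. Thus the entire matter reduces to bounding the boundary correction term $\bigl| \int_{\partial P}(u(t)-u(0))\,d\sigma \bigr|$ uniformly in $t$.

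For that I would invoke the corollary asserting $\bigl| \int_{\partial P} u(t)\,d\sigma \bigr| < C$ for all $t \in [0,T)$, apply the triangle inequality, and use the trivial remark that $\int_{\partial P} u(0)\,d\sigma$ is a fixed finite number, since $u(0)$ is a fixed smooth symplectic potential and $\partial P$ carries finite $(n-1)$-dimensional measure. This gives $\bigl| \int_{\partial P}(u(t)-u(0))\,d\sigma \bigr| \le C'$ with $C'$ independent of $t$; substituting back into the two displayed inequalities yields the claim for both integrals simultaneously.

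All the substance is already in place upstream: the $L^2$-bound of \autoref{l2-estimate}, the integration-by-parts step that isolated the boundary term $\int_{\partial P}(u(t)-u(0))\,d\sigma$, and the differentiated Calabi-energy identity of \autoref{difference_Calabi_energy}, which keeps $\int_0^{t_0}\!\int_{\partial P} R(t)\,p(z)\,d\sigma\,dt$ bounded and hence controls $\int_{\partial P}(u(t)-u(0))\,p(z)\,d\sigma$. Since $p(z)$ is bounded above and bounded away from zero on the \emph{closed} polytope $\bar P$ — its strict positivity there being part of the admissibility hypothesis — one passes freely between the $p(z)$-weighted and the unweighted boundary measures, exactly as was done for the interior integrals. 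Consequently there is essentially no obstacle at this final step: it is a bookkeeping assembly of estimates already obtained, and the one point worth a line of care is precisely the comparison of weighted and unweighted measures on $\partial P$, settled by the uniform two-sided bound on $p(z)$.
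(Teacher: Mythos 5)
Your proposal is correct and matches the paper's intent exactly: the paper gives no separate argument here beyond ``combining the above results,'' meaning precisely the two preceding corollaries, and your triangle-inequality assembly (using that $u(0)$ is continuous on $\bar{P}$ so $\int_{\partial P} u(0)\, d\sigma$ is a fixed finite number) is that combination. The only superfluous part is the discussion of passing between $p(z)$-weighted and unweighted boundary measures, which is not needed since the cited corollary already bounds the unweighted integral $\bigl| \int_{\partial P} u(t)\, d\sigma \bigr|$.
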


Now we are ready to give a proof of \autoref{distance}.
\begin{proof}[Proof of \autoref{distance}]
Note that we have uniformly controlled the following quantities:
\begin{align*}
\int_P u^2(t) ~ d \mu < C \\
\int_P \left(u^{ij}_{~kl} u^{kl}_{~ij}\right) (t) ~ d \mu < C \\
\bigg| \int_P Trace(u^{ij}(t)) ~ d \mu \bigg| < C.
\end{align*}

Thus following the proof of Theorem (1.3) in \cite{CHS}, we complete our proof.
\end{proof}

To obtain the uniform interior estimates along the Calabi flow $u(t), ~ t \in [0, T)$, we also need the following proposition:

\begin{prop}
\label{blow_up}
For every $\epsilon > 0$, there exists $C(\epsilon) > 0$ such that for any $t \in [0, T)$ and any $x \in P_\epsilon$, one has
$$
Q(t, x) d^2_{u(t)} (x, \partial P_\epsilon) < C(\epsilon),
$$
where $Q(t, x) =\left( |Rm| + |\nabla Rm|^{2/3} + |\nabla^2 Rm|^{1/2}\right)(t, x).$
\end{prop}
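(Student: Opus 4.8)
The plan is to argue by contradiction via a blow-up (point-picking) argument, which is the standard strategy for interior curvature bounds of this type and is exactly what underlies the analogous estimate in \cite{CHS}. Suppose the claim fails for some fixed $\epsilon > 0$: then there is a sequence of times $t_k \in [0,T)$ and points $x_k \in P_\epsilon$ with $Q(t_k, x_k)\, d^2_{u(t_k)}(x_k, \partial P_\epsilon) \to \infty$. Applying a point-selection lemma (of the Hamilton--Perelman type, adapted to the symplectic/toric setting as in \cite{CHS}), we may replace $x_k$ by a nearby point where $Q$ is almost maximal on a ball of radius comparable to $Q(t_k,x_k)^{-1}$, so that after rescaling the metrics $g_k = u(t_k)_{ij}\,dz_i\otimes dz_j$ by the factor $Q_k := Q(t_k,x_k)$ we obtain pointed Riemannian manifolds $(B_k, \tilde g_k, x_k)$ on which the rescaled quantity $\tilde Q$ is bounded by (say) $2$ and equals $1$ at $x_k$, and where the balls $B_k$ have radii tending to infinity in $\tilde g_k$.

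Next I would extract a smooth pointed limit. The rescaling normalizes $|Rm|$, $|\nabla Rm|$ and $|\nabla^2 Rm|$; to bootstrap to full $C^\infty$ control one uses that along the Calabi flow the higher curvature derivatives are controlled — here the Calabi-flow evolution equation $\partial_t u = \underline R - R(u)$ combined with the already-established interior estimate \autoref{distance} (which gives, for each $\delta$, a uniform lower bound on $d_t(P_\delta, P_{2\delta})$) and the global integral bounds $\int_P u^2 < C$, $\int_P u^{ij}_{\ kl}u^{kl}_{\ ij} < C$, $|\int_P \mathrm{Tr}(u^{ij})| < C$ play the role of the a priori controls. With a uniform injectivity radius lower bound at $x_k$ (which follows from the normalized curvature bound together with a non-collapsing input, again as in \cite{CHS}, using the volume/Sobolev-type information available on $P$), Cheeger--Gromov--type compactness yields a complete pointed limit metric $(X_\infty, g_\infty, x_\infty)$ which is a limit of (rescalings of) Calabi-flow metrics, hence satisfies the limiting equation with vanishing right-hand side after rescaling, i.e. it is an extremal/scalar-flat-type ALE or Ricci-type model, with $|Rm_{g_\infty}| \le 2$ everywhere and $|Rm_{g_\infty}|(x_\infty) = 1$.

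Finally I would derive the contradiction by a Liouville-type rigidity statement: because the blow-up scale $Q_k d^2_{u(t_k)}(x_k,\partial P_\epsilon) \to \infty$, the limit is genuinely noncompact and complete, and the scale-invariant energy $\int |Rm_{g_\infty}|^2$ is finite (inherited from the uniformly bounded, scale-invariant Calabi energy / $L^2$-curvature along the flow, which decreases in $t$), so $g_\infty$ is a complete scalar-flat (indeed extremal with constant scalar curvature, hence, after the blow-up, scalar-flat) Kähler surface with $\int|Rm|^2 < \infty$; such a space must be flat, contradicting $|Rm_{g_\infty}|(x_\infty) = 1$.

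I expect the main obstacle to be the non-collapsing / injectivity-radius lower bound needed to run Cheeger--Gromov compactness on the rescaled metrics: unlike in the compact toric case one is working on the open polytope $P$ near (but at controlled distance from) the boundary, and one must ensure the rescaled balls do not collapse, which requires leveraging the Guillemin boundary structure of $u$ and the uniform integral bounds rather than any pointwise lower volume bound that is assumed a priori; carefully transplanting the corresponding argument of \cite{CHS} to the $\PP(\mathcal O\oplus L_1\oplus L_2)$ setting — keeping track of the warping factor $p(z) = (\langle p_S,z\rangle + c_S)^m$ in all the integral identities — is where the real work lies, while the point-picking and the final Liouville step are comparatively routine.
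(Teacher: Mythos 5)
Your overall skeleton (contradiction, point-picking at the scale $Q$, rescaling so that the distance to $\partial P_\epsilon$ blows up, rigidity of the limit) is the same as the paper's, but the two load-bearing steps are either missing or false as stated. The fatal one is the final Liouville claim: a complete scalar-flat K\"ahler surface with $\int |Rm|^2 < \infty$ need \emph{not} be flat --- the Eguchi--Hanson metric, the Burns metric and LeBrun's ALE scalar-flat K\"ahler metrics are exactly such spaces --- so your concluding step cannot exclude the bubble. The paper never uses (and could not use) such a statement: it stays in the symplectic-potential picture, showing that the rescaled potentials $u^{(i)}$ converge to a smooth, strictly convex \emph{entire} solution of Abreu's equation $u^{ij}_{~ij}=0$ on $\RR^2$ (no boundary facet survives because the rescaled distance to $\partial P_\epsilon$ tends to infinity), and then invokes the rigidity of entire solutions, which must be quadratic, following \cite{CHS}/Donaldson; it is precisely this ``entire convex function on $\RR^2$'' structure that rules out ALE bubbles, and your purely Riemannian formulation loses it. Relatedly, your normalization $|Rm_{g_\infty}|(x_\infty)=1$ does not follow from the blow-up: what is normalized is $Q$, which also contains $|\nabla Rm|^{2/3}$ and $|\nabla^2 Rm|^{1/2}$, so the curvature itself may degenerate at the marked point; the paper handles this by a separate Case II, where $|\overline{Rm}^{(i)}|\to 0$ uniformly on the unit ball and the contradiction is with $Q_{\overline{g}^\infty}(0)=1$.

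The compactness machinery is also genuinely different, and your version rests on an input you admit you cannot supply. You propose Cheeger--Gromov convergence via an injectivity-radius/non-collapsing bound ``from volume/Sobolev-type information,'' which is nowhere established; the paper avoids this entirely. It rescales the flow (including $g_S$), uses \autoref{difference_Calabi_energy} together with the scale invariance of $\int_P |Rm|^2\, p(z)\,d\mu$ to compare distances at times in $[-1,0]$, constructs a cutoff $f_t$, proves the parabolic-type inequality for $\int_P f_t |\nabla^k Rm|^2\, p(z)\,d\mu$, deduces local $L^2$ bounds on all curvature derivatives at $t=0$, passes to the totally geodesic $\CC\PP^2$ fiber, and converts these into pointwise bounds via the appendix of \cite{FH}; subconvergence of the $u^{(i)}$ is then just local convergence of convex functions equipped with these derivative bounds. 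Your ``bootstrap to $C^\infty$ using the evolution equation and \autoref{distance}'' skips all of this: \autoref{distance} concerns the unrescaled flow, and the global integral bounds alone do not yield smooth subconvergence of the rescaled metrics. So: correct strategy in outline, but the higher-order local estimates for the rescaled flow and the correct rigidity theorem for the limit --- the actual content of the paper's proof --- are absent, and the rigidity statement you substitute is false.
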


We need to modify the proof of Theorem 1.2 in \cite{CHS} to fit into our situation. Our proof follows closely the section 4 in \cite{CHS}. Suppose the conclusion of \autoref{blow_up} is not true, then there exists a sequence $(t_i, x_i) \in [0, T) \times P_\epsilon$ such that 
$$
Q(t_i, x_i) d^2_{u(t_i)}(x_i, \partial P_\epsilon) = \max_{t \le t_i, ~ x \in P_\epsilon} Q(t, x) d^2_{u(t)} (x, \partial P_\epsilon)
$$
and
$$
\lim_{i \to \infty} Q(t_i, x_i) d^2_{u(t_i)}(x_i, \partial P_\epsilon) = \infty.
$$

Let $\lambda_i = Q(t_i, x_i)$ and we rescale the Calabi flow by $\lambda_i$. To do that, first we need to rescale the symplectic potential $u(t, x)$ such that the new symplectic potential $u^{(i)}(t)$ on $P^{(i)} = \lambda_i P$ is
$$
u_i(t, x) = \lambda_i u \left( \frac{t+t_i}{\lambda_i^2}, \frac{x+x_i}{\lambda_i} \right).
$$
We also need to rescale the metric $g_S$ by the factor $\lambda_i$. After rescaling the Calabi flow by $\lambda_i$, we obtain a sequence of Calabi flows $g^{(i)}(t)$. 
\begin{lemma}
There exists a constant $C > 0$ independent of $t$ and $g^{(i)}$ such that for any $t \in [-1, 0]$ and any sufficiently large $i$, we have
$$
d_{u^{(i)}(t)}(0, \partial P^{(i)}_\epsilon) \ge C d_{u^{(i)}(0)}(0, \partial P^{(i)}_\epsilon).
$$
\end{lemma}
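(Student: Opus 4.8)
The plan is to use the scale-invariant maximality built into the choice of $(t_i,x_i)$, together with the Calabi flow equation, to control the metric-distortion $|\partial_t g^{(i)}|_{g^{(i)}}$ along a minimizing geodesic issuing from $0$, and thereby to show that $t\mapsto d_{u^{(i)}(t)}(0,\partial P^{(i)}_\epsilon)$ cannot shrink by more than a fixed factor over $[-1,0]$.

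First I would record what the parabolic rescaling gives. Since $\lambda_i=Q(t_i,x_i)$ one has $Q^{(i)}(0,0)=1$, and the fact that $Q(t_i,x_i)\,d^2_{u(t_i)}(x_i,\partial P_\epsilon)$ is a running maximum becomes the scale-invariant inequality
\[
Q^{(i)}(t,x)\,d^2_{u^{(i)}(t)}(x,\partial P^{(i)}_\epsilon)\ \le\ \rho_i^2:=d^2_{u^{(i)}(0)}(0,\partial P^{(i)}_\epsilon)\qquad (t\le 0,\ x\in P^{(i)}_\epsilon),
\]
with $\rho_i\to\infty$ by hypothesis. On the other hand, the rescaled forms of \autoref{distance} (for the pair $\tfrac{\epsilon}{2},\epsilon$) and \autoref{blow_up} (with parameter $\tfrac{\epsilon}{2}$) produce a collar around $\partial P^{(i)}_\epsilon$ of $u^{(i)}(t)$-thickness $L_i\gtrsim\lambda_i^{1/2}$ on which, uniformly in $t$, $Q^{(i)}(t,\cdot)\le C(\epsilon)\,\lambda_i^{-1/2}\to 0$; the same a-priori control of the geometry of $(P,u(t))$ (in particular boundedness of $\mathrm{diam}_{u(t)}(P_\epsilon)$, as in \cite{CHS}) gives $\rho_i=\Theta(\lambda_i^{1/2})$, so $L_i\asymp\rho_i$. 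Note that $0$ must lie outside this collar for large $i$, for otherwise $Q^{(i)}(0,0)\to 0$, contradicting $Q^{(i)}(0,0)=1$.

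Next, for $t\in[-1,0]$ let $\gamma$ be a minimizing $u^{(i)}(t)$-geodesic of length $\rho_i^{(t)}:=d_{u^{(i)}(t)}(0,\partial P^{(i)}_\epsilon)$ from $0$ to $\partial P^{(i)}_\epsilon$, so $d_{u^{(i)}(t)}(\gamma(s),\partial P^{(i)}_\epsilon)=\rho_i^{(t)}-s$. I would split $\gamma$ into its bulk part, on which the displayed maximality reads $Q^{(i)}(t,\gamma(s))\le\rho_i/(\rho_i^{(t)}-s)$, and the part lying in the collar, on which $Q^{(i)}(t,\gamma(s))\le C(\epsilon)\lambda_i^{-1/2}$. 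Along the Calabi flow $\partial_t\omega^{(i)}=\sqrt{-1}\,\partial\bar\partial R^{(i)}$, so $\partial_t g^{(i)}_{k\bar l}=\nabla_k\nabla_{\bar l}R^{(i)}$, which by the contracted Bianchi identities is pointwise $\le|\nabla^2 Rm^{(i)}|+|Rm^{(i)}|^2\le 2(Q^{(i)})^2$ — the base term $Scal_S/(\langle p_S,z\rangle+c_S)$ in $R(u)$ being $O(\lambda_i^{-2})$ after the rescaling of $g_S$, hence negligible. The first variation of arclength then gives $|\tfrac{d}{dt}\rho_i^{(t)}|\le\tfrac12\int_\gamma|\partial_t g^{(i)}|\,ds\le\int_\gamma(Q^{(i)})^2\,ds$; integrating the two bounds over the corresponding parts of $\gamma$ (the bulk integral of $(\rho_i/(\rho_i^{(t)}-s))^2$ being $\lesssim\rho_i^2/L_i\asymp\rho_i$, the collar integral being $o(1)$) yields $|\tfrac{d}{dt}\rho_i^{(t)}|\le C(\epsilon)\rho_i$ whenever $\rho_i^{(t)}>L_i$, and $|\tfrac{d}{dt}\rho_i^{(t)}|\le C(\epsilon)\lambda_i^{-1/2}=o(1)$ whenever $\rho_i^{(t)}\le L_i$ (all of $\gamma$ then lying in the collar). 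Starting from $\rho_i^{(0)}=\rho_i>L_i$ and integrating backwards, $\rho_i^{(t)}$ can shrink at bounded rate only while it exceeds $L_i$, and once it reaches $L_i$ it moves at rate $o(1)$; hence $\rho_i^{(t)}\ge L_i-o(1)\ge\tfrac12 L_i\ge C'(\epsilon)\rho_i$ for all $t\in[-1,0]$ and large $i$, which is the asserted comparison with $C=C'(\epsilon)$.

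I expect the main obstacle to be the split estimate for $\int_\gamma|\partial_t g^{(i)}|$, and in particular the verification that the collar around $\partial P^{(i)}_\epsilon$ has $u^{(i)}(t)$-thickness $L_i\asymp\rho_i$ — i.e. that the a-priori estimates underlying \autoref{distance} and \autoref{blow_up} control both $d_{u(t)}(\partial P_\epsilon,\partial P_{\epsilon/2})$ from below and $\mathrm{diam}_{u(t)}(P_\epsilon)$ from above, uniformly in $t$. A secondary, bookkeeping difficulty is to keep track of the simultaneous rescaling of $(S,\omega_S)$ so that all base-curvature contributions genuinely vanish in the limit; this is what reduces the estimate to the purely toric $\CC \PP^2$-fibre geometry treated in \cite{CHS}.
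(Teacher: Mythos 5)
The central step of your scheme is not available at this stage of the argument: the ``collar'' on which $Q^{(i)}(t,\cdot)\le C(\epsilon)\lambda_i^{-1/2}$ is obtained by invoking \autoref{blow_up} (with parameter $\epsilon/2$), but the lemma you are proving is itself a step inside the contradiction proof of \autoref{blow_up}, and no induction over $\epsilon$ has been set up that would make the statement for $\epsilon/2$ available before the statement for $\epsilon$. This is circular. Without the collar the argument collapses: the only pointwise information you actually have is the rescaled maximality $Q^{(i)}(t,x)\,d^2_{u^{(i)}(t)}(x,\partial P^{(i)}_\epsilon)\le\rho_i^2$ (note this gives $Q^{(i)}\le\rho_i^2/d^2$, not $\rho_i/d$ as written), which \emph{degenerates} as the minimizing geodesic approaches $\partial P^{(i)}_\epsilon$; the resulting bound $\int_\gamma (Q^{(i)})^2\,ds\lesssim\int_0^{\rho_i^{(t)}}\rho_i^4(\rho_i^{(t)}-s)^{-4}\,ds$ is divergent at the endpoint, so the differential inequality for $\rho_i^{(t)}$ cannot be closed. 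Two secondary points: the claim $\rho_i=\Theta(\lambda_i^{1/2})$ is unjustified in its lower-bound half, since $x_i$ may sit arbitrarily close to $\partial P_\epsilon$ (only $\rho_i\to\infty$ and, given a diameter bound for $(P_\epsilon,u(t))$ that is not yet established here, $\rho_i\lesssim\lambda_i^{1/2}$ are available); and even in the bulk your rate bound $|\tfrac{d}{dt}\rho_i^{(t)}|\le C(\epsilon)\rho_i$ with $C(\epsilon)$ possibly larger than $1$ only closes because of the collar, so the gap is not peripheral but load-bearing.

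The paper avoids pointwise curvature control near $\partial P_\epsilon$ altogether and argues with integral quantities: under the rescaling, $\int_P R^2\,p(z)\,d\mu$ and $\int_P|Rm|^2\,p(z)\,d\mu$ are scale invariant, so the monotonicity identity of \autoref{difference_Calabi_energy} gives a bound, uniform in $i$, on $\int_{-1}^{0}\int_P u^{ik}u^{jl}R_{ij}R_{kl}\,d\mu\,dt$, which in symplectic coordinates is exactly the space--time $L^2$ norm of $\partial_t g^{(i)}$; together with $\int_P u^{ij}_{\ kl}u^{kl}_{\ ij}\,d\mu\le C$ this feeds directly into the length-comparison argument of Proposition 4.1 of \cite{CHS}, where the change of length of suitably chosen (Euclidean) segments over $[-1,0]$ is controlled by these integral bounds via Cauchy--Schwarz and a Fubini-type selection, with no pointwise bound on $Q$ required. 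If you want to salvage your approach you would have to replace the collar input by such integral estimates, at which point you are essentially reproducing the paper's proof.
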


\begin{proof}
Note that in our case, the Calabi energy of $g^{i}(t)$, $$Ca(g^{(i)}(t)) = \lambda_i^m Ca \left(g \left( \frac{t+t_i}{\lambda_i^2} \right) \right),$$
where $m = \dim_\CC S$. But
$$
\int_P R^2 ~ p(z) d \mu, \quad \int_P |Rm|^2 ~ p(z) d \mu
$$
are scaling invariant. 

Let us suppress the index $i$ of the Calabi flow $g^{(i)}(t)$. Applying \autoref{difference_Calabi_energy}, we have
$$
\int_{-1}^0 u^{ij}(t) u^{kl}(t) R_{ik}(t) R_{jl}(t) ~ d \mu d t \le C,
$$
where $C$ is a positive constant independent of $t$ and $g^{(i)}$. Since $\int_P |Rm|^2 ~ p(z) d \mu < C$, one has $\int_P u^{ij}_{~kl} u^{kl}_{~ij} ~ d \mu < C$, again $C$ is some constant independent of $t$ and $g^{(i)}$. Thus one can follow the proof of Proposition 4.1 in \cite{CHS} to finish the proof here.
\end{proof}

Next we need to construct the cutoff function $f_t$ on the polytope $P$ for $t \in [-1, 0]$. For any $x \in P$, let $d_t(x)$ be the Riemannian distance from the origin to $x$. Then $f_t$ needs to satisfy the following properties:
\begin{enumerate}
\item $f_t(d_t(x)) = 1$, for all $x \in P$ such that $d_t(x) \le 1/2$.

\item $f_t(d_t(x)) = 0$, for all $x \in P$ such that $d_t(x) \ge 1$.

\item $|\nabla f_t| \le C f_t^{\frac{a-1}{a}}, ~ \quad |\triangle f_t| \le C f_t^{\frac{a-1}{a}}$, where $C \in \RR^+, ~ a \in \ZZ^+$ are constants independent of $t$ and $g^{(i)}$.
\end{enumerate}

Our construction of $f_t$ is almost identical to the construction of $f_t$ in Section 4.1 in \cite{CHS}. Next we will need to obtain the following inequality:

\begin{prop}
$$
\frac{\partial}{\partial t} \int_P f_t |\nabla^k Rm_t|_t^2 ~ p(z) d\mu \le -\frac{1}{4} \int_P f_t |\nabla^{k+2} Rm_t|_t^2 ~ p(z) d \mu + C,
$$
where $t \in [-1, 0]$ and $C$ is some constant independent of $t$ and $g^{(i)}$.
\end{prop}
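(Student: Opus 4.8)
The plan is to reproduce, in the weighted setting, the argument of \cite[\S4.2]{CHS}, exploiting that $\CC\PP^2$ is totally geodesic in $(X,\omega)$, so that the curvature computations are the toric ones recalled in Section~2 carried along with the fixed weight $p(z)$. The first step is to write down the evolution of $|\nabla^k Rm|^2$ along the Calabi flow. From $\partial_t g_{i\bar j}=\partial_i\partial_{\bar j}(R-\underline R)$ and the usual commutation identities one obtains, on $P^0$,
\begin{align*}
\frac{\partial}{\partial t}|\nabla^k Rm|^2 = -2\langle\nabla^k Rm,\,\triangle^2\nabla^k Rm\rangle + \mathcal{Q}_k,
\end{align*}
where $\mathcal{Q}_k$ (which already absorbs the contributions of $\partial_t g$) is a universal finite linear combination of complete contractions of products $\nabla^{a_1}Rm\otimes\cdots\otimes\nabla^{a_s}Rm$ with $s\ge2$, total order $\sum_i a_i\le 2k+2$, and at most one factor of order $>k$ (so each factor has order $\le k+2$). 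This is the standard Bernstein--Bando--Shi computation for the fourth-order Calabi flow; in the symplectic frame it is expressed entirely through $u^{ij}$ and its $x$-derivatives via the formulas for $R$ and $|Rm|^2$ in Section~2, and the leading term has the correct sign, consistent with Lemma~\ref{difference_Calabi_energy}.

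Next, since $d\mu$ and $p(z)$ do not depend on $t$, and since by the distance lemma for the rescaled flow proved above (together with the bound on $|Rm|$ on $\{d_t\le1\}$ supplied by the blow-up normalization) the set $\mathrm{supp}\,f_t$ is compactly contained in the interior of $P^{(i)}$ --- so that $f_t$ and all its derivatives vanish near $\partial P$ and no boundary terms arise --- differentiating gives
\begin{align*}
\frac{\partial}{\partial t}\int_P f_t|\nabla^k Rm|^2\,p\,d\mu = \int_P(\partial_t f_t)|\nabla^k Rm|^2\,p\,d\mu - 2\int_P f_t\,p\,\langle\nabla^k Rm,\,\triangle^2\nabla^k Rm\rangle\,d\mu + \int_P f_t\,p\,\mathcal{Q}_k\,d\mu.
\end{align*}
I would then integrate the middle term by parts twice against $\phi:=f_t\,p$ and commute two covariant derivatives, producing $-2\int_P\phi|\nabla^{k+2}Rm|^2\,d\mu$ plus lower-order error integrals: a Bochner term $\int\phi\,Rm*\nabla^{k+1}Rm*\nabla^{k+1}Rm$, terms in which at least one derivative falls on $\phi$, the term with $\partial_t f_t$, and the terms of $\int\phi\,\mathcal{Q}_k$. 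All of these are handled by Cauchy--Schwarz and interpolation: the cutoff bounds $|\nabla f_t|,|\triangle f_t|\le Cf_t^{(a-1)/a}$ and the analogous bound on $\partial_t f_t$ give $|\nabla\phi|,|\triangle\phi|\le Cf_t^{(a-1)/a}$ on $\mathrm{supp}\,f_t$ (here one uses that $p(z)$ is a polynomial in the affine moment-map coordinates $z$, so its Euclidean derivatives are uniformly bounded on $\mathrm{supp}\,f_t$), and a suitable large choice of $a=a(k)$ as in \cite{CHS} bounds every such term by $\epsilon\int_P f_t|\nabla^{k+2}Rm|^2\,p\,d\mu + C_\epsilon\sum_{0\le j\le k+1}\int_P f_t|\nabla^j Rm|^2\,p\,d\mu + C_\epsilon$.

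Finally, I would absorb the sum over $j$ by iterating the interpolation inequality $\int f_t|\nabla^{j+1}Rm|^2\le\epsilon\int f_t|\nabla^{j+2}Rm|^2 + C_\epsilon\int f_t|\nabla^j Rm|^2$ down to $j\in\{0,1,2\}$, where the blow-up normalization enters decisively: on $\mathrm{supp}\,f_t\subset\{d_t\le1\}$ one has $Q\le C$, hence $|Rm|,|\nabla Rm|,|\nabla^2 Rm|\le C$, and volume comparison with bounded curvature (together with the fact that the rescaling of $g_S$ is chosen so that $p(z)$ stays uniformly controlled on $\mathrm{supp}\,f_t$) gives $\int_P f_t\,p\,d\mu\le C$, so $\int_P f_t|\nabla^j Rm|^2\,p\,d\mu\le C$ for $j=0,1,2$. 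Taking $\epsilon$ small then yields the claimed inequality with $C=C(k)$ independent of $t\in[-1,0]$ and of $g^{(i)}$.

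The conceptual inputs are just the evolution equation and this blow-up normalization; the real work --- the place where one must reproduce rather than merely cite \cite[\S4.2]{CHS} --- is the bookkeeping in the middle step. Concretely, one must verify that every term generated by the two integrations by parts and by $\mathcal{Q}_k$ is genuinely of lower order, and then set up the weighted interpolation hierarchy: the cutoff-derivative weights $|\nabla\phi|^2/\phi$, $|\triangle\phi|^2/\phi$ only carry a positive power $f_t^{1-2/a}$ of the cutoff once $a=a(k)$ is taken large, and one closes the estimate by interpolating against the family of weights $f_t^s$. Checking that the fixed weight $p(z)$, whose covariant derivatives involve the evolving metric, never destroys the uniformity of the constants is part of this same difficulty, and is why reducing to the polynomial dependence of $p$ on the affine coordinates $z$ is the convenient device.
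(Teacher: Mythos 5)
Your overall strategy is the paper's: the paper proves this proposition by adapting Theorem 4.7 of \cite{CHS} essentially verbatim, its only added remark being that the scale-invariant quantity in this setting is $\int_P |Rm|^2\,p(z)\,d\mu$ rather than the Calabi energy; your Bando--Shi-type evolution plus cutoff plus weighted interpolation outline, with the pointwise bounds $|Rm|,|\nabla Rm|,|\nabla^2 Rm|\le C$ on $\{d_t\le 1\}$ coming from the point-picking maximality of $Q\,d^2$ together with the distance-comparison lemma, is exactly that argument spelled out. (A small bookkeeping quibble: terms such as $Rm*\nabla^{k+1}Rm*\nabla^{k+1}Rm$ show that ``at most one factor of order $>k$'' is not quite the right description of the error terms, but you treat such Bochner terms separately anyway.)

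The one step that does not survive scrutiny as you justify it is the treatment of the weight. You set $\phi=f_t\,p$, integrate by parts against $\phi$, and claim $|\nabla\phi|,|\triangle\phi|\le Cf_t^{(a-1)/a}$ because ``$p$ is a polynomial in the affine coordinates $z$, so its Euclidean derivatives are bounded.'' Euclidean derivative bounds on $p$ do not bound its covariant derivatives in the evolving metric: $|\nabla p|^2_{g(t)}=u^{rs}(t)\,\partial_r p\,\partial_s p$ and $\triangle_{g(t)}p$ involve the inverse Hessian $H=(D^2u(t))^{-1}$, which is precisely the kind of metric control the interior estimate is ultimately trying to establish and which is not available on $\operatorname{supp}f_t$ at this stage (bounded curvature does not bound $H$; think of arbitrarily stretched flat metrics). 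So that step, as justified, is circular. It is also unnecessary: in moment-map coordinates the Riemannian volume form of every admissible metric in the class is the fixed form $p(z)\,\omega_S^m\wedge\langle dz\wedge\theta\rangle^l$, so $\int_P(\cdot)\,p\,d\mu$ is, up to a fixed constant, the integral over the closed manifold $X$ against $dV_{g(t)}$, independently of $u(t)$. Hence all integrations by parts should be carried out on $X$ against $f_t$ alone: $p$ sits inside the volume element and is never differentiated, there are no boundary terms to discuss, and the only derivative bounds needed are exactly the listed cutoff properties $|\nabla f_t|,|\triangle f_t|\le Cf_t^{(a-1)/a}$ (together with the control of $\partial_t f_t$, which again comes from $Q\le C$ on the support via $|\nabla\bar\nabla R|\le C$). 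With that correction your interpolation-and-absorption scheme closes as in \cite{CHS}, with a constant depending only on $k$ and independent of $t$ and $g^{(i)}$, as required.
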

Notice that the Calabi energy is not scaling invariant in our case, but $\int_P |Rm|^2 ~ p(z) d \mu$ is scaling invariant. Thus our proof is almost identical to Theorem 4.7 in \cite{CHS}. Similar to Corollary 4.8 in \cite{CHS}, we obtain

\begin{cor} At $t=0$, we have
$$
\int_P f |\nabla^k Rm|^2 ~ p(z) d \mu < C(k),
$$
where $k \in \ZZ$ and $C(k)$ is a constant depending on $k$ but independent of $g^{(i)}$.
\end{cor}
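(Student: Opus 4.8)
The plan is to bootstrap the differential inequality of the preceding Proposition, starting from the one scale-invariant quantity we have already controlled, $\int_P|Rm|^2 p(z)\,d\mu$. Write $A_k(t):=\int_P f_t\,|\nabla^k Rm_t|_t^2\,p(z)\,d\mu$ for $t\in[-1,0]$; the goal is $A_k(0)\le C(k)$ with $C(k)$ independent of the index $i$ of the rescaled flow $g^{(i)}$. The point that makes this work with $i$-independent constants is that, while the Calabi energy scales like $\lambda_i^m$ and is therefore useless here, the quantity $\int_P|Rm|^2 p(z)\,d\mu$ is scaling invariant and non-increasing along the Calabi flow (being a fixed multiple of $\int_X|Rm|^2\omega^n$); hence it is bounded, uniformly in $i$ and in $t\in[-1,0]$, by its value at the initial time of the un-rescaled flow. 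Since $0\le f_t\le1$, this already gives the base estimate $A_0(t)\le C$ for all $t\in[-1,0]$, and in particular $\int_{-1}^0 A_0(t)\,dt\le C$.

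Next I would iterate. Fix nested time intervals $[-1,0]=I_0\supset I_1\supset I_2\supset\cdots$ (for instance $I_j=[-2^{-j},0]$) and, for each $j$, a cutoff $\phi_j$ with $0\le\phi_j\le1$, supported in $I_j$, identically $1$ on $I_{j+2}$, and $|\phi_j'|\le C_j$. Assuming $\int_{I_j}A_j(t)\,dt\le C(j)$, I multiply the Proposition's inequality at level $k=j$ by $\phi_j$ and integrate over $I_j$; using $\phi_j(0)=1$, $\phi_j\equiv 0$ near the left endpoint of $I_j$, and $|\phi_j'|\le C_j$ one gets
\[
A_j(0)+\tfrac{1}{4}\int_{I_{j+2}}A_{j+2}(t)\,dt\;\le\;C_j\int_{I_j}A_j(t)\,dt+C\;\le\;C(j+2),
\]
which simultaneously yields the desired bound $A_j(0)\le C(j)$ and reproduces the inductive hypothesis one level higher. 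Starting from $j=0$, this covers all even $k$. For odd $k$ the identical iteration applies once the base estimate $\int_I A_1(t)\,dt\le C$ holds on some fixed subinterval $I\subset[-1,0]$, and this follows by integrating over $I$ the interpolation inequality $A_1(t)\le C(A_2(t)+A_0(t))$ — obtained by integration by parts, Cauchy--Schwarz and Young's inequality together with property (3) of $f_t$ — since by the even case $\int_I A_0$ and $\int_I A_2$ are already bounded uniformly in $i$.

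The genuinely delicate point sits not in the Corollary but in the Proposition that feeds it: that its error constant $C$ is independent of $i$. This rests precisely on the scaling invariance of $\int_P|Rm|^2 p(z)\,d\mu$ (so that the non-scale-invariant Calabi energy never enters the estimates), together with the structural properties (1)--(3) of $f_t$, which allow the contributions of $\nabla f_t$, $\triangle f_t$ and $\partial_t f_t$ to be absorbed by Young's inequality. Granting the Proposition, the Corollary reduces to the bookkeeping above; the only mild care needed is that each $I_j$ remains nonempty (automatic, since $j$ is fixed) and that the interpolation used in the odd base case involves only quantities already controlled uniformly in $i$, so no $i$-dependence creeps back in at any stage.
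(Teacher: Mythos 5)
Your proposal is correct and follows essentially the same route as the paper, which obtains this corollary exactly as in Corollary 4.8 of \cite{CHS}: iterate the differential inequality of the preceding Proposition in time, with the scale-invariant and monotone quantity $\int_P |Rm|^2\, p(z)\, d\mu$ (not the Calabi energy, which rescales by $\lambda_i^m$) supplying the $i$-independent base estimate. Your time-cutoff bookkeeping and the interpolation for odd $k$ are just a slightly more explicit rendering of that same argument.
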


Recall that $\CC \PP^2$ is a totally geodesic submanifold of $X$. Let $\overline{g}, ~\overline{\nabla}, ~ \overline{Rm}$ be the induced Riemannian metric, covariant derivative and Riemann curvature tensor on $\CC \PP^2$. Thus we have
$$
|\overline{\nabla}^k \overline{Rm}|_{\overline{g}} \le |\nabla^k Rm|_g.
$$
So we obtain

\begin{cor} At $t=0$, we have
$$
\int_P f |\overline{\nabla}^k \overline{Rm}|_{\overline{g}}^2 ~ p(z) d \mu < C(k),
$$
where $k \in \ZZ$ and $C(k)$ is a constant depending on $k$ but independent of $g^{(i)}$.
\end{cor}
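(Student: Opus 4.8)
The plan is to deduce the statement immediately from the preceding corollary together with the pointwise comparison $|\overline{\nabla}^k \overline{Rm}|_{\overline{g}} \le |\nabla^k Rm|_g$ along the totally geodesic submanifold $\CC \PP^2 \subset (X, \omega(0))$, and then to integrate that inequality against the nonnegative weight $f\, p(z)$.

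First I would justify the pointwise inequality (which is the one already recorded just before the corollary). Since $\CC \PP^2$ is totally geodesic in $(X, \omega(0))$, its second fundamental form vanishes, so the Gauss equation gives $\overline{Rm}(X,Y,Z,W) = Rm(X,Y,Z,W)$ for vectors tangent to $\CC \PP^2$, and the Gauss formula gives $\overline{\nabla}_X Y = \nabla_X Y$ for tangential $X,Y$. Feeding the latter into the Leibniz expansion of a covariant derivative, one checks by induction on $k$ that $\overline{\nabla}^k \overline{Rm}$ is exactly the restriction of $\nabla^k Rm$ to the tangent bundle of $\CC \PP^2$: if $\bar T$ is a covariant tensor on $\CC \PP^2$ extended arbitrarily to a tensor $T$ on $X$, then $(\overline{\nabla}\bar T)$ and $(\nabla T)$ agree on tangential arguments, because the function $T(X_1,\dots,X_m)$ agrees with $\bar T(X_1,\dots,X_m)$ along $\CC \PP^2$ and $\nabla_{X_0}X_i = \overline{\nabla}_{X_0}X_i$ stays tangential. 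Evaluating norms in an orthonormal frame adapted to the splitting $T\CC \PP^2 \oplus (T\CC \PP^2)^\perp$, the components of $\overline{\nabla}^k \overline{Rm}$ form a sub-collection of those of $\nabla^k Rm$, which gives the inequality.

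Next, since the cutoff $f = f_0 \ge 0$ and $p(z) > 0$ on $\bar{P}$, I multiply the pointwise inequality by $f\, p(z)$ and integrate over $P$ (working in the symplectic coordinates in which $\overline{g}$ is the toric metric on $\CC \PP^2$ living over $P$), obtaining
$$
\int_P f |\overline{\nabla}^k \overline{Rm}|_{\overline{g}}^2 ~ p(z)\, d\mu \;\le\; \int_P f |\nabla^k Rm|_g^2 ~ p(z)\, d\mu .
$$
The right-hand side is bounded by $C(k)$, with $C(k)$ independent of $g^{(i)}$, by the preceding corollary at $t = 0$, and this finishes the argument.

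The only step that is not completely formal is the verification of the tensorial identity $\overline{\nabla}^k \overline{Rm} = (\nabla^k Rm)|_{T\CC \PP^2}$ — that is, checking that for a totally geodesic submanifold no extrinsic (second-fundamental-form) terms enter the iterated covariant derivatives. But the base case $k=0$ is the Gauss equation and the inductive step uses only $\overline{\nabla}_X Y = \nabla_X Y$ for tangential $X,Y$, so even this "main obstacle" is routine; the substance of the corollary is entirely contained in the previous one.
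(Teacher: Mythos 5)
Your argument is exactly the paper's: the statement follows by multiplying the pointwise bound $|\overline{\nabla}^k \overline{Rm}|_{\overline{g}} \le |\nabla^k Rm|_g$ (a consequence of $\CC\PP^2$ being totally geodesic, so no second-fundamental-form terms appear in the Gauss equation or in the iterated covariant derivatives) by the nonnegative weight $f\,p(z)$ and invoking the preceding corollary at $t=0$. Your added induction verifying $\overline{\nabla}^k \overline{Rm} = (\nabla^k Rm)|_{T\CC\PP^2}$ simply fills in a detail the paper leaves implicit, so the proposal is correct and takes essentially the same route.
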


Applying the arguments in the appendix of \cite{FH}, we obtain

\begin{cor}
For any $p \in B_g(0, 1/2)$ where $B_g(0, 1/2)$ denotes the geodesic ball of $P$ centered at the origin with radius $1/2$, we have
$$
|\overline{\nabla}^k \overline{Rm}|_{\overline{g}}(p) < C(k),
$$
again $C(k)$ is a constant depending on $k$ but independent of $g^{(i)}$.
\end{cor}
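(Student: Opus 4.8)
The plan is to upgrade the $L^2$ bounds of the preceding corollary to the pointwise bounds on $B_g(0,1/2)$ by a local elliptic‑regularity argument carried out in a good coordinate chart on the fiber $\CC\PP^2$ of the rescaled metric $g^{(i)}(0)$, in the spirit of the appendix of \cite{FH}. The inputs available are: first, the integral estimates $\int_P f\,|\overline{\nabla}^k\overline{Rm}|_{\overline{g}}^2\,p(z)\,d\mu<C(k)$ for every $k$, with $f\equiv 1$ on $\{d_t\le 1/2\}$; and second, the fact that $(t_i,x_i)$ was chosen to maximise $Q(t,x)\,d^2_{u(t)}(x,\partial P_\epsilon)$, which after rescaling forces $|Rm|+|\nabla Rm|^{2/3}+|\nabla^2 Rm|^{1/2}\le\Lambda$ on the relevant parabolic region, hence, using $|\overline{\nabla}^j\overline{Rm}|_{\overline{g}}\le|\nabla^j Rm|_g$, uniform pointwise control of $|\overline{Rm}|_{\overline{g}}$, $|\overline{\nabla}\,\overline{Rm}|_{\overline{g}}$ and $|\overline{\nabla}^2\overline{Rm}|_{\overline{g}}$ on a slightly larger ball $B_g(0,\rho)$, $\rho\in(1/2,1)$. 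In particular the rescaled fiber metrics $\overline{g}^{(i)}(0)$ have uniformly bounded curvature on $B_g(0,\rho)$.

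The geometric step is to promote the curvature bound to a uniform injectivity‑radius bound $\mathrm{inj}_{\overline{g}^{(i)}(0)}(0)\ge i_0>0$. This does not follow from bounded curvature alone, and here I would exploit the action–angle description of the fiber metric: on $z^{-1}(P^0)$ one has $\overline{g}=u_{jk}\,dx^jdx^k+u^{jk}\,dt_jdt_k$, whose Riemannian volume form is exactly the Lebesgue measure $d\mu\,dt$, while $p(z)$ is uniformly comparable to a positive constant on the ball (since $x_i\in P_\epsilon$ is a fixed positive distance from $\partial P$ and $\lambda_i\to\infty$). Since $\overline{g}^{(i)}(0)=\lambda_i\overline{g}$ up to the rescaling diffeomorphism, the Lebesgue‑measure identity makes the volume of $B_g(0,r)$ track $r^4$ in a scale‑invariant way; combined with $|\overline{Rm}|_{\overline{g}}\le\Lambda$ this yields $i_0$, as in \cite{FH}. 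From the curvature bound together with $i_0$ one then obtains a harmonic coordinate chart centred at the origin, of a fixed radius $r_0>0$, in which $\overline{g}^{(i)}(0)$ is uniformly equivalent to the Euclidean metric and has uniform $C^{1,\alpha}$ control; in particular the Euclidean Sobolev inequality holds on the chart with an $i$‑independent constant.

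In that chart the estimates $\int_{B_g(0,1/2)}|\overline{\nabla}^k\overline{Rm}|_{\overline{g}}^2\,d\mathrm{vol}_{\overline{g}}<C(k)$ (equivalent to the preceding corollary, up to the factor $p(z)$, because $f\equiv 1$ there) translate, via the uniform $C^{1,\alpha}$ control of the metric, into $W^{k,2}$ bounds on the curvature and hence $W^{k+2,2}$ bounds on the metric coefficients, for every $k$. Since $\CC\PP^2$ has real dimension $4$, iterating the Euclidean Sobolev embedding and using that these bounds are available for all $k$ gives $C^\infty_{\mathrm{loc}}$ bounds on the metric in the chart, i.e.\ pointwise bounds on every $|\overline{\nabla}^k\overline{Rm}|_{\overline{g}}$ on a ball of radius $\min(1/2,r_0)$ about the origin; after shrinking (or, equivalently, slightly enlarging) the radius $1/2$ used in the construction of the cutoff $f_t$ this is exactly the asserted bound $|\overline{\nabla}^k\overline{Rm}|_{\overline{g}}(p)<C(k)$ on $B_g(0,1/2)$, with $C(k)$ independent of $g^{(i)}$.

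The step I expect to be the main obstacle is precisely the uniform non‑collapsing / injectivity‑radius estimate at the origin: bounded curvature is not enough, and one has to carry the action–angle argument of \cite{FH} over to the present projective bundle, checking that the weight $p(z)$ and the fiberwise‑restricted tensors $\overline{\nabla},\overline{Rm}$ behave under the parabolic rescaling $u_i(t,x)=\lambda_i u\big(\tfrac{t+t_i}{\lambda_i^2},\tfrac{x+x_i}{\lambda_i}\big)$ as required, and that the geodesic balls $B_g(0,\rho)$ stay well inside $z^{-1}(P^0)$ so that the action–angle coordinates — and hence the Lebesgue volume form — are available on them; the remaining analytic steps are standard once a uniform chart is in hand.
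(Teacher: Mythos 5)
Your reduction to the rescaled fiber metrics and the use of the all-$k$ integral bounds is the right starting point, but the step you yourself flag as the crux --- the uniform non-collapsing bound $\mathrm{inj}_{\overline{g}^{(i)}(0)}(0)\ge i_0$ --- is not established by the argument you give, and as stated that argument is false. The fact that in action--angle coordinates the fiber metric $\overline{g}=u_{jk}\,dx^jdx^k+u^{jk}\,dt_jdt_k$ has Riemannian volume form equal to $d\mu\,dt$ does \emph{not} force geodesic balls to have volume comparable to $r^4$, even with $|\overline{Rm}|\le\Lambda$ pointwise: take a potential whose Hessian has one very large eigenvalue $\Lambda_0$ in some direction; the corresponding angular circle has length of order $2\pi/\sqrt{\Lambda_0}$, the metric can be flat or nearly flat, the volume form is still Lebesgue, and yet a geodesic ball of fixed radius has volume of order $r^3/\sqrt{\Lambda_0}$ and the injectivity radius tends to $0$. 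In the blow-up sequence the Hessians of the rescaled potentials $u^{(i)}$ are precisely the quantities one is trying to control (this is the content of Theorem \ref{interior}), so such degeneration cannot be assumed away; pointwise bounds on $|\overline{Rm}|$, $|\overline{\nabla}\,\overline{Rm}|$, $|\overline{\nabla}^2\overline{Rm}|$ from the maximality of $Q\,d^2$ plus the Lebesgue identity do not rule it out. Since the harmonic chart, the uniform Sobolev constant on the chart, and the ensuing $W^{k,2}\Rightarrow C^\infty$ bootstrap all rest on this non-collapsing claim, the proposal as written does not prove the corollary; closing it honestly (e.g.\ by first bounding $D^2u^{(i)}$ on the ball) risks being circular with the interior estimate itself.

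Note also that the paper does not take the Cheeger--Gromov route at all: its proof of this corollary is a direct appeal to the appendix of \cite{FH}, where the passage from cutoff integral bounds $\int_P f\,|\overline{\nabla}^k\overline{Rm}|_{\overline{g}}^2\,p(z)\,d\mu<C(k)$ (for every $k$) to pointwise bounds is carried out using the $\TT^2$-invariant structure --- the curvature quantities descend to functions on the two-dimensional polytope, where the estimates are run --- rather than via an injectivity-radius bound and harmonic coordinates on the four-dimensional fiber. So either reproduce that torus-invariant argument, or supply a genuine proof of the non-collapsing statement; at present that step is a gap, not a deferral.
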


\begin{proof}[Proof of \autoref{blow_up}]

Our proof is similar to the proof of Theorem 1.2 in \cite{CHS}. Recall that we have obtained a sequence of metric $\overline{g}^{(i)}$ on $\CC \PP^2$. We want to study the corresponding symplectic potentials $u^{(i)}$ on $P_i$. We have two cases:

Case I. By possibly passing to a subsequence, there exists $p_i \in B_i(0, 1/2)$ such that $|\overline{Rm}^{(i)}(p_i)| \ge C > 0$ for some constant $C$ independent of $i$. Then $u^{(i)}$ converges to a smooth, strictly convex function $u$ on $\RR^2$ with 
$$
u^{ij}_{~ij} = 0, \quad u^{ij}_{~kl} u^{kl}_{~ij} (p_\infty) \ge C,
$$
where $p_\infty$ is the limit of $p_i \in \RR^2$. However, in this case $u$ must be a quadratic function, a contradiction. 

Case II. Lifting to the tangent space of $\CC \PP^2$, $|\overline{Rm}^{(i)}|$ converges uniformly to $0$ on $B_i(0, 1/2)$. However, at the limit, we cannot have $|\overline{Rm}^{(\infty)}| = 0$ on $B_\infty(0, 1/2)$ and  $Q_{\overline{g}^\infty} (0) = 1$.

\end{proof}

Combining \autoref{distance} and \autoref{blow_up}, using the arguments of the proof of Theorem 1.1 in \cite{CHS}, we obtain \autoref{interior}.

\section{$C^0$-estimate}

In this section, we focus on $(X, J) = P(E) \to \Sigma$, where $E = \mathcal{O} \oplus \mathcal{L}_1 \oplus \mathcal{L}_2$ and $\mathcal{L}_1, \mathcal{L}_2$ are holomorphic line bundles over a compact complex curve $\Sigma$.

\subsection{Bisectional curvature}

Let us calculate the norm of bisectional curvature of $x \in X$ with $z(x) \in P^0$. First of all, let $\eta_0 = x_0 + \sqrt{-1} y_0$ be a local holomorphic coordinate in a domain $\Omega \subset \Sigma$. Then on $\Omega$, we can write $$\omega_\Sigma = i \partial \bar{\partial} \varphi_\Sigma,$$

where $\varphi_\Sigma(z) = |\eta_0|^2 + a |\eta_0|^4 + o(|\eta_0|^5)$ and $a = -\frac{Scal_\Sigma}{2}$ is a constant. By the Legendre transformation, the dual coordinates of $z_1, z_2$ are $$\xi_1 := \frac{\partial u}{\partial z_1}, ~ \xi_2 := \frac{\partial u}{\partial z_2}$$ and the dual of the symplectic potential $u$ is $$\varphi_{\CC \PP^2} (\xi_1, \xi_2) := \xi_1 z_1 + \xi_2 z_2 - u(z_1, z_2).$$ Then $$\eta_i = \xi_i - p_i \varphi_\Sigma + \sqrt{-1} t_i, ~ i = 1, 2$$ are the holomorphic coordinates, where $\varphi_\Sigma$ is the obvious pullback function via $\pi : X \to \Sigma$. Also the K\"ahler potential on $X^0 = \Omega \times P^0 \times \TT^2$ is \[
  \varphi = c_\Sigma \varphi_\Sigma  + 2 \varphi_{\CC \PP^2},
\]
where $c_\Sigma$ is some constant parameterizing the K\"ahler class.

Next we want to write down the K\"ahler metric in our coordinates $\eta_0, \eta_1, \eta_2$. Our old real coordinate system on $\Omega \times P^0$ is $x_0, y_0, \xi_1, \xi_2$ and our new real coordinate system is $x_0, y_0, \zeta_1 := \xi_1 - p_1 \varphi_\Sigma, \zeta_2 := \xi_2 - p_2 \varphi_\Sigma$. Thus we have

\begin{align*}
  & ~ \left(
  \begin{array}[]{cccc}
    \frac{\partial x_0}{\partial x_0} & \frac{\partial x_0}{\partial y_0} & \frac{\partial x_0}{\partial \zeta_1} & \frac{\partial x_0}{\partial \zeta_2}\\
    \vdots & \vdots & \vdots & \vdots \\
    \frac{\partial \xi_2}{\partial x_0} & \frac{\partial \xi_2}{\partial y_0} & \frac{\partial \xi_2}{\partial \zeta_1} & \frac{\partial \xi_2}{\partial \zeta_2}
  \end{array}
  \right) \\
  = & ~ \left( 
  \begin{array}[]{cccc}
    \frac{\partial x_0}{\partial x_0} & \frac{\partial x_0}{\partial y_0} & \frac{\partial x_0}{\partial \xi_1} & \frac{\partial x_0}{\partial \xi_2}\\
    \vdots & \vdots & \vdots & \vdots \\
    \frac{\partial \zeta_2}{\partial x_0} & \frac{\partial \zeta_2}{\partial y_0} & \frac{\partial \zeta_2}{\partial \xi_1} & \frac{\partial \zeta_2}{\partial \xi_2}
  \end{array}
  \right)^{-1} \\
  = & ~ \left( 
  \begin{array}[]{cccc}
    1 & 0 & 0 & 0 \\
    0 & 1 & 0 & 0 \\
    -p_1 \varphi_{\Sigma, ~ x_0} & -p_1 \varphi_{\Sigma, ~ y_0} & 1 & 0 \\
    -p_2 \varphi_{\Sigma, ~ x_0} & -p_2 \varphi_{\Sigma, ~ y_0} & 0 & 1 \\
  \end{array}
  \right)^{-1}\\
  = & ~ \left( 
  \begin{array}[]{cccc}
    1 & 0 & 0 & 0 \\
    0 & 1 & 0 & 0 \\
    p_1 \varphi_{\Sigma, ~ x_0} & p_1 \varphi_{\Sigma, ~ y_0} & 1 & 0 \\
    p_2 \varphi_{\Sigma, ~ x_0} & p_2 \varphi_{\Sigma, ~ y_0} & 0 & 1 \\
  \end{array}
  \right)\\
\end{align*}

At a point $x$, direct calculations show

\begin{align*}
  \omega(x) = \sqrt{-1} \left( 2 p(z) d \eta_0 \wedge d \bar{\eta}_0 + \frac{1}{2} H_{ij} \eta_i \wedge \bar{\eta}_j \right), ~ i,j \ge 1,
\end{align*}

where $H_{ij} = \frac{\partial^2 \varphi_{\CC \PP^2}}{\partial \xi_i \partial \xi_j}$. 

\begin{align*}
  \varphi_{\eta_0 \bar{\eta}_0 \eta_0} =&~ 0 \\
  \varphi_{\eta_0 \bar{\eta}_i \eta_0} =&~ 0, ~ i \ge 1 \\
  \varphi_{\eta_0 \bar{\eta}_0 \eta_i} =&~ p_1 H_{1i} + p_2 H_{2i}, ~ i \ge 1 \\
  \varphi_{\eta_0 \bar{\eta}_i \eta_j} =&~ 0, ~ i, j \ge 1 \\
  \varphi_{\eta_i \bar{\eta}_0 \eta_j} =&~ 0, ~ i, j \ge 1 \\
  \varphi_{\eta_i \bar{\eta}_j \eta_k} =&~ \frac{1}{4} H_{ijk}, ~ i, j, k \ge 1 \\
\end{align*}

and

\begin{align*}
  \varphi_{\eta_0 \bar{\eta}_0 \eta_0 \bar{\eta}_0} = &~ 4a p(z) + 4 \left(p_1^2 H_{11} + 2 p_1 p_2 H_{12} + p_2^2 H_{22}\right)\\
  \varphi_{\eta_0 \bar{\eta}_0 \eta_0 \bar{\eta}_i} = &~ 0, ~ i \ge 1 \\
  \varphi_{\eta_0 \bar{\eta}_0 \eta_i \bar{\eta}_j} = &~ \frac{1}{2} \left( p_1 H_{1ij} + p_2 H_{2ij} \right) \\
  \varphi_{\eta_0 \bar{\eta}_i \eta_0 \bar{\eta}_j} = &~ 0\\
  \varphi_{\eta_0 \bar{\eta}_i \eta_j \bar{\eta}_k} = &~ 0\\
  \varphi_{\eta_i \bar{\eta}_j \eta_k \bar{\eta}_l} = &~ \frac{1}{8} H_{ijkl}.\\
\end{align*}

The bisectional curvature can be expressed as:

\begin{align*}
  Rm_{i \bar{j} k \bar{l}} = - \varphi_{i \bar{j} k \bar{l}} + g^{s\bar{t}} \varphi_{i \bar{l} \bar{t}} \varphi_{\bar{j} k s}.
\end{align*}

We have

\begin{align*}
  Rm_{0 \bar{0} 0 \bar{0}} =& ~  - 4a p(z) - 2 \left( p_1^2 H_{11} + 2 p_1 p_2 H_{12} + p_2^2 H_{22} \right)\\
  Rm_{0 \bar{0} 0 \bar{i}} =& ~ 0, ~ i \ge 1 \\
  Rm_{0 \bar{0} i \bar{j}} =& ~ - \frac{1}{2} \left( p_1 H_{1ij} + p_2 H_{2ij} \right) + \frac{1}{2 p(z)} (p_1 H_{1i} + p_2 H_{2i}) (p_1 H_{1j} + p_2 H_{2j}), ~ i, j \ge 1 \\
  Rm_{0 \bar{i} 0 \bar{j}} =& ~ 0 , ~ i, j \ge 1\\
  Rm_{0 \bar{i} j \bar{k}} =& ~ 0 , ~ i, j, k \ge 1\\
  Rm_{i \bar{j} k \bar{l}} =& ~ \frac{1}{8} \left(-H_{ijkl} + H^{st}H_{ilt}H_{jks} \right) , ~ i, j, k, l \ge 1\\
\end{align*}

So the Ricci curvature is

\begin{align*}
  Ric_{0\bar{0}} =& ~ -2a - H^{ij} \left(p_1 H_{1ij} + p_2 H_{2ij}\right), ~ i, j \ge 1.\\
  Ric_{0 \bar{i}} =& ~ 0, ~ i \ge 1. \\
  Ric_{i \bar{j}} =& ~  \frac{1}{4} H^{kl} \left( -H_{ijkl} + H^{st} H_{ilt} H_{jks} \right), ~ i, j, k, l \ge 1. \\
\end{align*}

and the scalar curvature is

\begin{align*}
  R = \frac{-2a}{p(z)} - \frac{1}{p(z)} \frac{\partial^2}{\partial z_r \partial z_s} \left( p(z) u^{rs} \right).
\end{align*}

By definition, the norm of the bisectional curvature at $x$ is

\begin{align*}
  |Rm(x)|^2 = & \frac{1}{4 p^4(z)} \left( 2a p(z) + p_1^2 H_{11} + 2 p_1 p_2 H_{12} + p_2^2 H_{22} \right)^2 + \\
  & \frac{1}{4 p(z)^2} H^{ik} H^{jl} \left(- p_1 H_{1ij} - p_2 H_{2ij} + \frac{1}{p(z)} (p_1 H_{1i} + p_2 H_{2i})(p_1 H_{1j} + p_2 H_{2j}) \right)\\
  & \left(- p_1 H_{1kl} - p_2 H_{2kl} + \frac{1}{p(z)} (p_1 H_{1k} + p_2 H_{2k})(p_1 H_{1l} + p_2 H_{2l}) \right) + |Rm(x)_{\CC \PP^2}|^2
\end{align*}

\subsection{Intial metric}

Let us choose the initial metric of the Calabi flow on the $\CC \PP^2$ fiber to be the Fubini-Study metric and we will estimate the norm of bisectional curvature.

Let the Delzant triangle $P$ to be the triangle of three vertex $(-1, -1), (-1, 2)$ and $(2, -1)$. The corresponding K\"ahler class of $P$ will be $c_1 (\mathcal{O}(3))$. Let the symplectic potential be
\begin{align*}
  v(x,y) = \frac{1}{2} \left( (x+1) \ln (x+1) + (y+1) \ln(y+1) + (1-x-y) \ln (1-x-y) \right).
\end{align*}
Then

\begin{align*}
  \left( D^2 v(x,y) \right) =& ~ \frac{1}{2} \left(
  \begin{array}[]{ll}
    \frac{2-y}{(1+x)(1-x-y)} & \frac{1}{1-x-y} \\
    \frac{1}{1-x-y} & \frac{2-x}{(1+y)(1-x-y)}
  \end{array}
  \right)\\
  \det\left( D^2 v(x,y) \right) =& ~ \frac{3}{4} \frac{1}{(1+x)(1+y)(1-x-y)} \\
  \left( D^2 v(x,y) \right)^{-1} =& ~ \frac{2}{3} \left( 
  \begin{array}[]{ll}
    (2-x)(1+x) & -(1+x)(1+y) \\
    -(1+x)(1+y) & (2-y)(1+y)
  \end{array}
  \right)
\end{align*}

For any point $(x,y) \in P$, we have

\begin{align*}
  |Rm|^2_{\CC \PP^2} (x,y) =& \frac{1}{4} \sum_{i,j,k,l} v^{ij}_{~kl} v^{kl}_{~ij} = \frac{4}{3} \\
  R_{\CC \PP^2}(x,y) = & - \sum_{ij} v^{ij}_{~ij} = 4
\end{align*}

Let $\tilde{x}$ be any point in $X$. Notice that
\begin{align*}
  H_{ij}(z(\tilde{x})) = \left( D^2 v(z(\tilde{x})) \right)^{-1}.
\end{align*}

and 

\begin{align*}
  v^{xx} < 3, ~ v^{yy} < 3, ~ |v^{xy}| < 6.
\end{align*}

Together with $p_1 \ge p_2$, we have 
\begin{align*}
  &  \frac{1}{4 p^4(z(\tilde{x}))} \left( 2a p(z(\tilde{x})) + p_1^2 H_{11}(\tilde{x}) + 2 p_1 p_2 H_{12}(\tilde{x}) + p_2^2 H_{22}(\tilde{x}) \right)^2 \\
  \le ~ & \frac{Scal_\Sigma^2}{p^2(z(\tilde{x}))} + \frac{1}{p^4(z(\tilde{x}))} (9 p_1^4 + 72 p_1^2 p_2^2 + 9 p_2^4) \\
  \le ~ & \frac{Scal_\Sigma^2}{p^2(z(\tilde{x}))} + \frac{90 p_1^4}{p^4(z(\tilde{x}))}
\end{align*}

By the Legendre transform, we have

\begin{align*}
  H^{ik} \frac{\partial}{\partial \xi_k} = \frac{\partial}{\partial z_i}.
\end{align*}

Thus

\begin{align*}
  & \bigg| \frac{1}{4p(z)^2} \sum_{i,j,k,l} G_{ik} G_{jl} \left(- p_1 H_{1ij} - p_2 H_{2ij} + \frac{1}{p(z)} (p_1 H_{1i} + p_2 H_{2i})(p_1 H_{1j} + p_2 H_{2j}) \right)\\
  & \left(- p_1 H_{1kl} - p_2 H_{2kl} + \frac{1}{p(z)} (p_1 H_{1k} + p_2 H_{2k})(p_1 H_{1l} + p_2 H_{2l}) \right) \bigg| \\
  = ~ & \bigg| \frac{1}{4 p(z)^2} \sum_{i,l} \left( -p_1 \frac{\partial H_{1i}}{\partial z_l} - p_2 \frac{\partial H_{2i}}{\partial z_l} + \frac{1}{p(z)}(p_1 H_{1i} + p_2 H_{2i})(p_1 \delta^1_l + p_2 \delta^2_l )\right) \\
  & \left( -p_1 \frac{\partial H_{1l}}{\partial z_i} - p_2 \frac{\partial H_{2l}}{\partial z_i} + \frac{1}{p(z)}(p_1 \delta^1_i + p_2 \delta^2_i)(p_1 H_{1l} + p_2 H_{2l} )\right) \bigg| \\
  \le ~ & \frac{1}{p(z)^2} \left(2 p_1 + 2 p_2 + \frac{6 \left( p_1 + p_2 \right)^2}{p(z)} \right)^2 \\
  \le ~ & \frac{1}{p(z)^2} \left(4 p_1 + \frac{24 p_1^2}{p(z)} \right)^2.
\end{align*}

In the last inequality of the above calculations, we use the fact that

\begin{align*}
  |v^{xx}_{~x}| \le 2, ~ |v^{xy}_{~y}| \le 2, ~ |v^{xy}_{~x}| \le 2, ~ |v^{yy}_{~y}| \le 2.
\end{align*}

Finally we have the following inequality

\begin{align}
  \label{Control_Rm}
  |Rm(\tilde{x})|^2 \le \frac{1}{p^2(z(\tilde{x}))} \left( Scal_\Sigma^2 + \frac{90 p_1^4}{p^2(z(\tilde{x}))} + \left(4 p_1 + \frac{24 p_1^2}{p(z(\tilde{x}))} \right)^2 \right) + \frac{4}{3}.
\end{align}

\subsection{Sobolev constant}

Recall that for a closed Riemannian manifold $(X, g)$ with real dimension $m$. The Sobolev constant is a constant $C_s$ depending on $(X, g)$ such that for any smooth function $f$, we have

\begin{equation}
  \label{Sobolev_inequality}
  \| f \|_{L^{\frac{2m}{m-2}}} \le C_s \left( \| \nabla f \|_{L^2} + Vol^{-\frac{1}{m}} \| f \|_{L^2} \right),
\end{equation}
where $Vol$ is the volume of $(X, g)$. It is easy to check that the Sobolev constant is scaling invariant.

The idea of obtaining a concrete value of the Sobolev constant $C_s$ for a K\"ahler surface $(X, J, \omega)$ with a cscK metric goes back to Tian. See also Tian-Viaclovsky \cite{TV1, TV2} and Chen-LeBrun-Weber \cite{CLW}. Later, Chen-He \cite{ChenHe2} generalizes this idea to control the Sobolev constant along the Calabi flow. The arguments presented in this subsection are known in the literature. We put them here for the convenience of the readers.

To control the Sobolev constant, one needs to relate it to the Yamabe constant. Recall that for a closed 4-manifold $(X, g_0)$, the Yamabe constant of the conformal class $[g_0]$ is defined as

\begin{align*}
  Y_{[g_0]} = \inf_{g \in [g_0]} \frac{\int_X R_g ~ d \mu_g}{\sqrt{Vol(X, g)}}.
\end{align*}

To establish the inequality \eqref{Sobolev_inequality} for any smooth function $f$, we consider a new metric $g = f^2 g_0$ conformal to $g_0$. Then the scalar curvature of $g$ is

\begin{align*}
  f^3 R_g = (- 6\triangle_{g_0} + R_{g_0}) f.
\end{align*}

Then

\begin{align}
  \label{Yamabe}
  Y_{[g_0]} \le \frac{\int_X (6 |\nabla f|_{g_0}^2 + R_{g_0}f^2) ~ d \mu_{g_0}}{\sqrt{\int_X f^4 ~ d \mu_{g_0}}}.
\end{align}

In our case, we need to turn the above inequality \eqref{Yamabe} to a Sobolev inequality in $\CC \PP^2$. The first thing we would do is to control the Yamabe constant $Y_{[g_0]}$, where $g_0$ is a K\"ahler metric in $c_1(\mathcal{O}_{\CC \PP^2}(3))$. For any $g$ in the conformal class $[g_0]$, the Gauss-Bonnet formula tells us that

\begin{align*}
  \frac{1}{8 \pi^2} \int_{\CC \PP^2} \left( |W_+|^2 + |W_{-}|^2 + \frac{R^2}{24} - \frac{|Ric_0|^2}{2} \right) ~ d \mu_{g} = \chi(\CC \PP^2),
\end{align*}

where 

\begin{enumerate}
  \item $W_+$ is the self-dual Weyl curvature of $g$.
  \item $W_{-}$ is the anti-self-dual Weyl curvature of $g$.
  \item $R$ is the scalar curvature of $g$. 
  \item $Ric_0$ is the trace-free Ricci curvature of $g$.
  \item $\chi$ is the Euler-characteristic.
\end{enumerate}

And the Hirzebruch signature formula tells us that

\begin{align*}
  \frac{1}{12 \pi^2} \int_X \left( |W_+|^2 - |W_{-}|^2 \right) ~ d \mu_{g}  = \tau(\CC \PP^2),
\end{align*}
where the signature $\tau(\CC \PP^2) = b_+(\CC \PP^2) - b_{-} (\CC \PP^2).$

We have

\begin{align}
  \label{eq_Y}
  \frac{1}{4 \pi^2} \int_{\CC \PP^2} \left(\frac{R^2}{24} + 2 |W_+|^2\right) ~ d \mu_{g} \ge \left( 2 \chi + 3 \tau \right) (\CC \PP^2) = c_1^2(\CC \PP^2).
\end{align}

The last equality holds because $\CC \PP^2$ admits an almost complex structure. By the celebrated work of Trudinger, Aubin, and Schoen, there exists a metric $g_Y \in [g_0]$ such that 

\begin{align*}
  Y_{[g]} = \frac{\int_{\CC \PP^2} R(g_Y) ~ d \mu_{g_Y}}{\sqrt{\int_{\CC \PP^2} ~ d \mu_{g_Y}}},
\end{align*}

and $R(g_Y)$ is a constant on $\CC \PP^2$.  The inequality \eqref{eq_Y} becomes

\begin{align*}  
\frac{1}{4 \pi^2} \int_{\CC \PP^2} \left(\frac{R^2(g_Y)}{24} + 2 |W_+(g_Y)|^2\right) ~ d \mu_{g_Y} \ge c_1^2(\CC \PP^2).
\end{align*}

Thus

\begin{align*}
  Y_{[g_0]}^2 \ge 96 \pi^2 c_1^2(\CC \PP^2) - 48 \int_{\CC \PP^2} |W_+(g_Y)^2| ~ d \mu_{g_Y}.
\end{align*}

Since 

\begin{align*}
  \int_{\CC \PP^2} |W_+|^2 ~ d \mu
\end{align*}

is a conformal invariant, we have

\begin{align*}
  Y_{[g_0]}^2 \ge 96 \pi^2 c_1^2(\CC \PP^2) - 48 \int_{\CC \PP^2} |W_+(g_0)^2| ~ d \mu_{g_0}.
\end{align*}

Since $g_0$ is K\"ahler, we have

\begin{align*}
  \int_{\CC \PP^2} |W_+(g_0)|^2 ~ d \mu_{g_0} = \int_{\CC \PP^2} \frac{R(g_0)^2}{24} ~ d \mu_{g_0}.
\end{align*}

Thus

\begin{align*}
  Y_{[g_0]}^2 \ge 96 \pi^2 c_1^2(\CC \PP^2) - 2 \int_{\CC \PP^2} R(g_0)^2 ~ d \mu_{g_0}.
\end{align*}

Using the proof of Lemma 6.1 in \cite{ChenHe2}, we know that if 

\begin{align}
  \label{eq_cs}
  96 \pi^2 c_1^2(\CC \PP^2) - 2 \int_{\CC \PP^2} R(g_0)^2 ~ d \mu_{g_0} \ge \int_{\CC \PP^2} (R(g_0) - \underline{R})^2 ~ d \mu_{g_0},
\end{align}

then $Y_{[g_0]}$ is positive and 

\begin{align*}
  C_s \le \max \left(6, \underline{R}\sqrt{Vol(\CC \PP^2, g_0)} \right) \left( Y_{[g_0]} - |R_{g_0} - \underline{R}|_{L^2} \right)^{-1}.
\end{align*}

Our previous calculations show that

\begin{align*}
  \underline{R} = 4, \quad Vol(\CC \PP^2, g_0) = \frac{1}{2} (2 \pi)^2 * \frac{9}{2}.
\end{align*}

Hence

\begin{align*}
  C_s \le 12 \pi \left( Y_{[g_0]} - |R_{g_0} - \underline{R}|_{L^2} \right)^{-1}.
\end{align*}

Now we want to derive an explicit bound for the Calabi energy of $g_0$ to satisfy \eqref{eq_cs}. By our choice of normalization, 

\begin{align*}
  c_1^2(\CC \PP^2) = \int_{\CC \PP^2} \left( \frac{\underline{R}}{2 \cdot 4 \pi} \omega_0 \right)^2 =  \frac{9}{2}. 
\end{align*}

Inequality \eqref{eq_cs} implies that

\begin{align*}
  96  \pi^2 > \int_{\CC \PP^2} (R(g_0) - \underline{R})^2 ~ d \mu_{g_0}.
\end{align*}

In summary, we have the following proposition:

\begin{prop}
  \label{prop_sc}
  For any K\"ahler metric $g_0 \in c_1(\mathcal{O}(3))$, if $Ca(g_0)$, i.e., the Calabi energy of $g_0$, is strictly less than $96 \pi^2$, then
  the Yamabe constant satisfies
  \begin{align*}
    Y_{[g_0]} > Ca(g_0),
  \end{align*}

  and the Sobolev constant satisfies

  \begin{align*}
     C_s \le 12 \pi \left( Y_{[g_0]} - Ca(g_0) \right)^{-1}.
  \end{align*}
\end{prop}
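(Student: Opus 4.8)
\emph{Proof plan.} The statement repackages the computations of this subsection, so the plan is to collect the Yamabe lower bound just obtained for $[g_0]$ and feed it into the Chen--He Sobolev estimate, keeping track of the explicit numerology of $c_1(\mathcal{O}(3))$. The first step is to assemble the Yamabe bound: pairing the Gauss--Bonnet and Hirzebruch signature formulas gives, for every metric $g\in[g_0]$, the inequality $\frac{1}{4\pi^2}\int_{\CC\PP^2}\bigl(\frac{R^2}{24}+2|W_+|^2\bigr)d\mu_g\ge(2\chi+3\tau)(\CC\PP^2)=c_1^2(\CC\PP^2)$; evaluating this at a Yamabe minimizer $g_Y\in[g_0]$ --- which exists by Trudinger--Aubin--Schoen and has constant scalar curvature, so that $\int_{\CC\PP^2}R(g_Y)^2\,d\mu_{g_Y}=Y_{[g_0]}^2$ --- and then using the conformal invariance of $\int|W_+|^2$ together with the K\"ahler identity $\int_{\CC\PP^2}|W_+(g_0)|^2\,d\mu_{g_0}=\tfrac1{24}\int_{\CC\PP^2}R(g_0)^2\,d\mu_{g_0}$, yields
\[
Y_{[g_0]}^2\;\ge\;96\pi^2 c_1^2(\CC\PP^2)-2\int_{\CC\PP^2}R(g_0)^2\,d\mu_{g_0}.
\]

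The second step is to reduce the hypothesis to condition \eqref{eq_cs}. Here I would insert the data of $g_0\in c_1(\mathcal{O}(3))$ --- namely $\underline R=4$, $\mathrm{Vol}(\CC\PP^2,g_0)=\tfrac12(2\pi)^2\cdot\tfrac92$ and $c_1^2(\CC\PP^2)=\tfrac92$ --- and rewrite $\int_{\CC\PP^2}R(g_0)^2\,d\mu_{g_0}=Ca(g_0)+\underline R^2\,\mathrm{Vol}$ using $\int_{\CC\PP^2}R(g_0)\,d\mu_{g_0}=\underline R\,\mathrm{Vol}$. A short arithmetic check then shows that the assumption $Ca(g_0)<96\pi^2$ is precisely what makes condition \eqref{eq_cs}, i.e.
\[
96\pi^2 c_1^2(\CC\PP^2)-2\int_{\CC\PP^2}R(g_0)^2\,d\mu_{g_0}\;\ge\;\int_{\CC\PP^2}(R(g_0)-\underline R)^2\,d\mu_{g_0},
\]
hold.

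The third step is to invoke (the proof of) Lemma 6.1 in \cite{ChenHe2}: once \eqref{eq_cs} holds, the Yamabe constant $Y_{[g_0]}$ is positive and
\[
C_s\;\le\;\max\bigl(6,\,\underline R\sqrt{\mathrm{Vol}}\,\bigr)\bigl(Y_{[g_0]}-\|R_{g_0}-\underline R\|_{L^2}\bigr)^{-1}.
\]
Since $\underline R\sqrt{\mathrm{Vol}}=4\cdot 3\pi=12\pi>6$, this is exactly the asserted estimate $C_s\le 12\pi(Y_{[g_0]}-Ca(g_0))^{-1}$, and the positivity of its denominator is the inequality $Y_{[g_0]}>Ca(g_0)$ in the statement.

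No new analytic content enters: the three ingredients --- the curvature/Weyl inequality of the first step, the existence of a constant-scalar-curvature Yamabe minimizer, and the Chen--He Sobolev estimate --- are already in place, so the main obstacle (such as it is) is purely the constant bookkeeping of the second step. One must track the powers of $2\pi$ hidden in the volume, the factor $\tfrac12$ distinguishing $c_1^2(\CC\PP^2)=\tfrac92$ from the topological value $9$, and the precise form of the Chen--He inequality, so that the threshold comes out to exactly $96\pi^2$ and the constant to exactly $12\pi$; reconciling the symbol $Ca(g_0)$ with the $L^2$-norm $\|R_{g_0}-\underline R\|_{L^2}$ appearing in \cite{ChenHe2} is part of this verification.
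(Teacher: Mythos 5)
Your route is the same as the paper's: the Proposition is just a summary of the preceding subsection, and you reassemble exactly its ingredients (Gauss--Bonnet plus signature giving $Y_{[g_0]}^2\ge 96\pi^2c_1^2(\CC\PP^2)-2\int R(g_0)^2\,d\mu_{g_0}$ via the Yamabe minimizer, conformal invariance of $\int|W_+|^2$ and the K\"ahler identity, then the quoted Chen--He bound with $\max(6,\underline R\sqrt{\mathrm{Vol}})=12\pi$). The problem is precisely the step you wave through as ``a short arithmetic check.'' With the paper's numerics $\underline R=4$, $\mathrm{Vol}=\tfrac12(2\pi)^2\cdot\tfrac92=9\pi^2$, $c_1^2(\CC\PP^2)=\tfrac92$, one has $\int R(g_0)^2=Ca(g_0)+144\pi^2$, so condition \eqref{eq_cs} reads $432\pi^2-2Ca(g_0)-288\pi^2\ge Ca(g_0)$, i.e.\ it is equivalent to $Ca(g_0)\le 48\pi^2$, not to $Ca(g_0)<96\pi^2$. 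Hence the stated hypothesis does not imply \eqref{eq_cs}, and your step 2 fails as written (e.g.\ $Ca=60\pi^2$ satisfies the hypothesis but violates \eqref{eq_cs}, and the Yamabe lower bound $Y^2\ge 144\pi^2-2Ca$ becomes vacuous once $Ca>72\pi^2$).

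A second, related gap is your claim that the Chen--He estimate ``is exactly'' the asserted one: the Chen--He denominator is $Y_{[g_0]}-\|R_{g_0}-\underline R\|_{L^2}=Y_{[g_0]}-\sqrt{Ca(g_0)}$, and passing to $Y_{[g_0]}-Ca(g_0)$ requires both $Ca\ge\sqrt{Ca}$ and, more seriously, the inequality $Y_{[g_0]}>Ca(g_0)$, which the argument does not give: the bound only yields $Y\ge\sqrt{144\pi^2-2Ca}$, and since by Aubin $Y_{[g_0]}\le Y(S^4)=8\sqrt6\,\pi$, the conclusion $Y>Ca$ cannot hold for $Ca$ anywhere near $96\pi^2$. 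To be fair, both defects are inherited from the statement and the paper's own loose remark (``Inequality \eqref{eq_cs} implies $96\pi^2>\int(R-\underline R)^2$'' goes in the wrong direction for what the Proposition needs); the provable version is: if $Ca(g_0)\le 48\pi^2$ then $Y_{[g_0]}\ge\|R_{g_0}-\underline R\|_{L^2}$, $Y_{[g_0]}>0$, and $C_s\le 12\pi\bigl(Y_{[g_0]}-\|R_{g_0}-\underline R\|_{L^2}\bigr)^{-1}$, which is what your three ingredients actually deliver and which still suffices for the later application, where $Ca(\varphi_{\CC\PP^2}(t))<45\pi^2$. As a blind proof of the statement as literally given, however, your proposal has a genuine arithmetic gap rather than a complete verification.
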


\subsection{Controlling $c_\Sigma$}

Let $c_0 = 12 p_1$ and the K\"ahler potential of an admissble K\"ahler metric on $X = P(E) \to \Sigma$ be

\begin{align*}
  \varphi = c_0 \varphi_\Sigma + 2 \varphi_{FS},
\end{align*}

here we choose the normalization such that $Scal_\Sigma = 1, 0 $ or $-1$. Then by Gauss-Bonnet theorem, 

\begin{align*}
  Vol(\Sigma, \varphi_\Sigma) = |4 \pi \chi(\Sigma)|.
\end{align*}

By the inequality \eqref{Control_Rm}, we have 

\begin{align*}
  |Rm(\tilde{x})|^2 \le \frac{1}{100 p_1^2} \left( 1 + \frac{90 p_1^2}{100} + \left(4 + \frac{24}{10} \right)^2 p_1^2 \right) + \frac{4}{3} < \frac{1}{2} + \frac{4}{3},
\end{align*}

for any $\tilde{x} \in X$. Then 

\begin{align*}
  \int_{X} |Rm|^2 ~ d \mu_{\varphi} <  4 \pi |\chi(\Sigma)| \frac{(2 \pi)^2}{3!} \frac{9}{2} 14 p_1 \left( \frac{1}{2} + \frac{4}{3} \right).
\end{align*}

Let $\varphi(t) = c_0 \varphi_\Sigma + \varphi_{\CC \PP^2}(t)$ be the Calabi flow on $X$ starting from $\varphi$. Then

\begin{align*}
  \int_{X} |Rm|^2(t) ~ d \mu_{\varphi(t)} <  4 \pi |\chi(\Sigma)| \frac{(2 \pi)^2}{3!} \frac{9}{2} 14 p_1 \left( \frac{1}{2} + \frac{4}{3} \right).
\end{align*}

Since $\CC \PP^2$ is a totally geodesic K\"ahler submanifold of $X$ of any admissible K\"ahler metric. We have

\begin{align*}
  |Rm|^2 (t) \ge |Rm|_{\CC \PP^2}^2(t).
\end{align*}

Hence

\begin{align*}
  \int_X |Rm|^2 (t) ~ d \mu_{\varphi(t)} \ge & \int_X |Rm|^2_{\CC \PP^2}(t) ~ d \mu_{\varphi(t)} \\
  \ge & 4 \pi |\chi(X)| \frac{(2 \pi)^2}{3!} 10 p_1 \int_P |Rm|^2_{\CC \PP^2}(t) ~ d \mu. 
\end{align*}

It gives us 

\begin{align*}
  \int_{P} |Rm|^2_{\CC \PP^2}(t) ~ d \mu < \frac{63}{10} \left( \frac{1}{2} + \frac{4}{3} \right).
\end{align*}

Since 

\begin{align*}
  \frac{(2\pi)^2}{2} \int_{P} |Rm|^2_{\CC \PP^2}(t) ~ d \mu - \frac{1}{4} Ca(\varphi_{\CC \PP^2}(t))
\end{align*}

is a constant and when $t=0$, $Ca(\varphi_{\CC \PP^2}(t)) = 0$. We have

\begin{align*}
  Ca(\varphi_{\CC \PP^2}(t)) =& ~  8 \pi^2 \int_P \left( |Rm|^2_{\CC \PP^2}(t) - |Rm|^2_{\CC \PP^2}(0) \right)~ d \mu \\
  < & ~ 45 \pi^2.
\end{align*}

By Proposition \eqref{prop_sc}, we have the following result:

\begin{prop}
\label{Sobolev_CP2}
  Let $c_\Sigma$ be any constant greater than $c_0 = 12 p_1$. Let $\varphi(t) = c_\Sigma \varphi_\Sigma + \varphi_{\CC \PP^2}(t)$ be the Calabi flow on $X$ starting from $\varphi = c_\Sigma \varphi_\Sigma + 2 \varphi_{FS}$. Then there exists a constant $C_s$ such that the Sobolev constant of the K\"ahler metrics corresponding to $\varphi_{\CC \PP^2}(t)$ on $\CC \PP^2$, i.e. $\omega_{\CC \PP^2}(t) = \sqrt{-1} \partial \bar{\partial} \varphi(t)$ on $P^0 \times \TT^2$ ,  is bounded by $C_s$.
\end{prop}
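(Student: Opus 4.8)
The plan is to deduce this from Proposition~\ref{prop_sc}: it suffices to show that each restricted metric $\omega_{\CC\PP^2}(t)$ is a K\"ahler metric in $c_1(\mathcal{O}(3))$ whose Calabi energy stays strictly below $96\pi^2$ uniformly in $t$, and that the bound Proposition~\ref{prop_sc} then produces for the Sobolev constant is itself uniform in $t$. Almost all of the required computation has already been carried out in the discussion preceding the statement, in the case $c_\Sigma = c_0 = 12p_1$; the point is to check that nothing degenerates for a general $c_\Sigma \ge c_0$ --- and it will turn out that the estimates only improve.

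First I would note that raising $c_\Sigma$ only raises $p(z) = \langle p_\Sigma, z\rangle + c_\Sigma$ pointwise, so on $\bar{P}$ one still has $c_\Sigma - 2p_1 \le p(z) \le c_\Sigma + 2p_1$, and in particular $p(z) \ge 10p_1$. Since the estimate $|Rm(\tilde x)|^2 < \tfrac{1}{2} + \tfrac{4}{3}$ for the initial admissible metric $\varphi = c_\Sigma\varphi_\Sigma + 2\varphi_{FS}$ extracted from \eqref{Control_Rm} used only $p(z) \ge 10p_1$, it carries over verbatim. Using $\omega^n = p(z)\,\omega_\Sigma \wedge \langle dz \wedge \theta\rangle^2$ and the fact that $|Rm|^2$ and $p(z)$ depend only on $z$, this gives $\int_X |Rm|^2(0)\,d\mu_{\varphi(0)} = \tfrac{Vol(\Sigma)(2\pi)^2}{3!}\int_P |Rm|^2(0)\,p(z)\,d\mu \le \tfrac{Vol(\Sigma)(2\pi)^2}{3!}(\tfrac{1}{2}+\tfrac{4}{3})(c_\Sigma + 2p_1)|P|$. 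On the other hand $\int_X |Rm|^2(t)\,d\mu_{\varphi(t)}$ is non-increasing along the Calabi flow (as recalled in Section~3), and $\CC\PP^2$ is totally geodesic in $(X,\omega(t))$ because the flow preserves admissibility, so $|Rm|^2(t) \ge |Rm|^2_{\CC\PP^2}(t)$ pointwise and hence $\int_X |Rm|^2(t)\,d\mu_{\varphi(t)} \ge (c_\Sigma - 2p_1)\tfrac{Vol(\Sigma)(2\pi)^2}{3!}\int_P |Rm|^2_{\CC\PP^2}(t)\,d\mu$. Combining the two bounds, cancelling $Vol(\Sigma)$ and inserting $|P| = \tfrac{9}{2}$, I would obtain $\int_P |Rm|^2_{\CC\PP^2}(t)\,d\mu \le (\tfrac{1}{2}+\tfrac{4}{3})\tfrac{9}{2}\cdot\tfrac{c_\Sigma + 2p_1}{c_\Sigma - 2p_1}$; since $\tfrac{c_\Sigma + 2p_1}{c_\Sigma - 2p_1}$ decreases in $c_\Sigma$ it is at most $\tfrac{14}{10}$, so $\int_P |Rm|^2_{\CC\PP^2}(t)\,d\mu < \tfrac{63}{10}(\tfrac{1}{2}+\tfrac{4}{3})$ for all $t \in [0,T)$ and all $c_\Sigma \ge c_0$.

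Next, the quantity $\tfrac{(2\pi)^2}{2}\int_P |Rm|^2_{\CC\PP^2}\,d\mu - \tfrac{1}{4}Ca(\varphi_{\CC\PP^2})$ depends only on the fixed K\"ahler class; evaluating it at $t = 0$, where $\varphi_{\CC\PP^2}(0) = 2\varphi_{FS}$ has $Ca = 0$ and $|Rm|^2_{\CC\PP^2}(0) \equiv \tfrac{4}{3}$, turns the previous estimate into $Ca(\varphi_{\CC\PP^2}(t)) = 8\pi^2\int_P(|Rm|^2_{\CC\PP^2}(t) - \tfrac{4}{3})\,d\mu < 45\pi^2 < 96\pi^2$, uniformly in $t$ and in $c_\Sigma \ge c_0$. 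Because the K\"ahler class of $\omega_{\CC\PP^2}(t)$ is fixed, $\int_{\CC\PP^2} R(\omega_{\CC\PP^2}(t))^2\,d\mu = Ca(\varphi_{\CC\PP^2}(t)) + \underline R^2\,Vol$ is also uniformly bounded, so the lower bound for the Yamabe constant $Y_{[\omega_{\CC\PP^2}(t)]}$ extracted in the proof of Proposition~\ref{prop_sc} is uniform in $t$. Applying Proposition~\ref{prop_sc} to $g_0 = \omega_{\CC\PP^2}(t)$ for each $t$ then yields a bound on its Sobolev constant depending only on these uniform quantities, which is the desired $C_s$.

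I expect the main obstacle to be the uniformity as $c_\Sigma \to \infty$: the volume of $X$ grows linearly in $c_\Sigma$ while the fiber $\CC\PP^2$ is fixed, so a naive passage from $\int_X$ to $\int_P$ would lose a factor of $c_\Sigma$. The resolution, visible above, is that this factor enters the upper bound for $\int_X |Rm|^2(0)$ through $p(z) \le c_\Sigma + 2p_1$ and the lower bound for $\int_X |Rm|^2_{\CC\PP^2}(t)$ through $p(z) \ge c_\Sigma - 2p_1$, so it cancels in the ratio and the final estimate actually improves as $c_\Sigma$ increases. The three inputs I rely on --- monotonicity of $\int_X |Rm|^2$ along the Calabi flow, total geodesy of $\CC\PP^2$ inside any admissible metric together with the flow preserving admissibility, and Proposition~\ref{prop_sc} itself --- are all already in place.
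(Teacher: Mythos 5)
Your proposal is correct and follows essentially the same route as the paper: the pointwise bound \eqref{Control_Rm} on the initial admissible metric, monotonicity of $\int_X |Rm|^2$ along the flow, total geodesy of the $\CC\PP^2$ fiber to pass to $\int_P |Rm|^2_{\CC\PP^2}(t)\,d\mu$ via the bounds $c_\Sigma - 2p_1 \le p(z) \le c_\Sigma + 2p_1$, the cohomological identity relating this integral to $Ca(\varphi_{\CC\PP^2}(t))$, and finally Proposition \ref{prop_sc}. The only difference is that you make explicit the uniformity in $c_\Sigma \ge 12p_1$ through the ratio $\frac{c_\Sigma+2p_1}{c_\Sigma-2p_1} \le \frac{14}{10}$, which the paper leaves implicit by carrying out the computation at $c_\Sigma = c_0$.
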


Now we are ready to prove \autoref{thm_proj_Sob} which is the following theorem:

\begin{thm}
\label{Sob}
There exists a constant $C > 0$ independent of $t$ such that for any $f \in C^\infty(\bar{P})$ and for any $t \in [0, T)$, one has
$$
\| f \|_{L^3(\omega(t))} \le C \left(\| f \|_{L^2(\omega(t))}  +  \| \nabla f \|_{L^2(\omega(t))} \right)
$$
\end{thm}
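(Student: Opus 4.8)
The plan is to deduce the Sobolev inequality on the polytope $P$ (equipped with the symplectic-coordinate metric $u(t)_{ij}\,dz_i\otimes dz_j$) from the Sobolev inequality on the $\CC\PP^2$ fiber, which Proposition~\ref{Sobolev_CP2} already provides with a constant $C_s$ independent of $t$. The key observation is that a function $f\in C^\infty(\bar P)$ pulls back to a $\TT^2$-invariant function on the complex manifold $\CC\PP^2$ via the moment map, and the Sobolev inequality on $(\CC\PP^2,\omega_{\CC\PP^2}(t))$ applies to any smooth function in particular to this $\TT^2$-invariant pullback. So first I would carefully record how the relevant integrals transform. On $P^0\times\TT^2$ the volume form of $\omega_{\CC\PP^2}(t)$ is $\det(u_{ij})\,d\mu\wedge dt_1\wedge dt_2$, but by the Legendre transform $\det(u_{ij})\,d\mu$ pushed forward to $P$ equals the Lebesgue measure $d\mu$ on $P$ (this is the standard fact that the symplectic volume is $d\mu$), so integrating a $\TT^2$-invariant $|f|^p$ against $\omega_{\CC\PP^2}(t)^2/2$ gives exactly $(2\pi)^2\int_P |f|^p\,d\mu$, with \emph{no} $t$-dependence in the measure. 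Likewise the gradient: for a $\TT^2$-invariant $f$, $|\nabla f|^2_{\omega_{\CC\PP^2}(t)} = u^{ij}(t)\,f_i f_j$, and integrating this over the fiber again produces $(2\pi)^2\int_P u^{ij}(t) f_i f_j\,d\mu = (2\pi)^2\|\nabla f\|^2_{L^2(\omega(t))}$ in the notation of the statement.

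With these identifications, the Sobolev inequality \eqref{Sobolev_inequality} on $\CC\PP^2$ (real dimension $m=4$, so $2m/(m-2)=4$) reads, after pulling back,
\begin{align*}
\left( (2\pi)^2 \int_P |f|^4\,d\mu \right)^{1/4} \le C_s \left( \left( (2\pi)^2\int_P u^{ij}(t) f_i f_j\,d\mu\right)^{1/2} + Vol^{-1/4} \left((2\pi)^2\int_P f^2\,d\mu\right)^{1/2} \right),
\end{align*}
where $Vol = Vol(\CC\PP^2,\omega_{\CC\PP^2}(t))$. But $Vol$ is determined by the Kähler class $c_1(\mathcal O(3))$ and is therefore a fixed constant, $\tfrac12(2\pi)^2\cdot\tfrac92$, independent of $t$ (this was computed in the text). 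Absorbing all the fixed powers of $2\pi$ and $Vol$ into the constant, we obtain an $L^4$-to-$(L^2+W^{1,2})$ inequality on $(P,d\mu,u(t))$ with a constant independent of $t$. Finally, passing from $L^4$ to $L^3$ is elementary: $P$ has finite Lebesgue measure, so by Hölder $\|f\|_{L^3(P,d\mu)} \le |P|^{1/12}\,\|f\|_{L^4(P,d\mu)}$; and the $L^2$ norms on the left, $\|f\|_{L^2(\omega(t))}$, equal $(2\pi)^{-1}(2\pi)\cdot(\int_P f^2 d\mu)^{1/2}$ up to the fixed constant, while $\|f\|_{L^2(P,d\mu)}$ itself is comparable to $\|f\|_{L^2(\omega(t))}$ with $t$-independent constants since the measure is literally $d\mu$ on $P$. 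Combining these reductions yields the claimed bound.

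The only subtle point — and the step I would write out most carefully — is the measure identification: one must be sure that "the $L^p$ norm on $P$ with respect to $u(t)$" in the statement of Theorem~\ref{thm_proj_Sob} means integration against the $t$-independent Lebesgue measure $d\mu$ (the pushforward of the symplectic volume), and not against some $t$-dependent weight like $\det(D^2 u(t))\,d\mu$. Once that is pinned down, the entire argument is bookkeeping on top of Proposition~\ref{Sobolev_CP2}; there is no genuine analytic obstacle, because the hard work — the uniform bound on the Calabi energy of $\varphi_{\CC\PP^2}(t)$, hence on the Yamabe constant, hence on the Sobolev constant of the fiber — has already been carried out in Propositions~\ref{prop_sc} and~\ref{Sobolev_CP2} via the Gauss--Bonnet/Hirzebruch computation and the curvature estimate \eqref{Control_Rm}. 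I would therefore present the proof as: (i) pull back $f$ to a $\TT^2$-invariant function on $\CC\PP^2$; (ii) rewrite $\|f\|_{L^4}$, $\|\nabla f\|_{L^2}$, $\|f\|_{L^2}$ and $Vol$ in terms of polytope integrals, noting $t$-independence of the measure and the volume; (iii) invoke Proposition~\ref{Sobolev_CP2}; (iv) use finiteness of $|P|$ and Hölder to trade $L^4$ for $L^3$.
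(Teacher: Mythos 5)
Your proposal is correct and follows essentially the same route as the paper: interpret the norms as polytope integrals against a $t$-independent measure (the paper uses the fiber-integrated weight $p(z)\,d\mu$, which is comparable to $d\mu$ since $p>0$ on the compact set $\bar P$), invoke Proposition~\ref{Sobolev_CP2} for the uniform $L^4$ Sobolev inequality of $\omega_{\CC\PP^2}(t)$ on the fiber, and use H\"older on the finite-measure polytope to pass from $L^4$ to $L^3$. Your more careful bookkeeping of the fiber-integral identifications and of the fixed volume is exactly what the paper's terse proof leaves implicit.
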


\begin{proof}
Notice that
$$
\| f \|_{L^3(\omega(t))} = \left( \int_P |f(z(t))|^3 ~ p(z(t)) d \mu \right)^{1/3} < C \left( \int_P |f(z(t))|^4 ~ d \mu \right)^{1/4}
$$
Also
\begin{align*}
& \| f \|_{L^2(\omega(t))}  +  \| \nabla f \|_{L^2(\omega(t))} \\
= & \left( \int_P |f|^2(z(t))~ p(z(t)) d \mu \right)^{1/2} + \left( \int_P |\nabla f|_{\omega_t}^2 (z(t))~ p(z(t)) d \mu \right)^{1/2}\\
\ge & C \left( \int_P |f|^2(z(t))~ d \mu \right)^{1/2} + \left( \int_P |\nabla f|_{\omega_t}^2 (z(t))~d \mu \right)^{1/2}.
\end{align*}

By \autoref{Sobolev_CP2},  the Sobolev constant of $\omega_{\CC \PP^2}(t) = \omega_{FS} + \sqrt{-1} \partial \bar{\partial} \varphi_{\CC \PP}^2(t), ~ t \in [0, T)$ is uniformly bounded on $\CC \PP^2$, the conclusion holds.
\end{proof}

By adapting the results in \cite{H1}, we obtain a proof of \autoref{c0}:
\begin{proof}[Proof of \autoref{c0}]
Once we are able to control the Sobolev constant along the Calabi flow, we can apply the techniques developed in \cite{H1} to get the uniform $C^0$-norm bounds. Let $\varphi_{\CC \PP^2} (t), ~ t \in [0, T)$ be the Calabi flow on $X$ with uniform Sobolev constants bounds. (Note that $\varphi_{\CC \PP^2} (t)$ does not satisfy the Calabi flow equation on $\CC \PP^2$.) Let $\omega(t) = \omega+ \sqrt{-1} \partial \bar{\partial} \varphi(t)$ be the corresponding one parameter family of Kaehler metrics on $X$. Let  $u(t)$ be the corresponding symplectic potentials on the Delzant polytope $P$. By \autoref{l2-estimate}, there exists a constant $C > 0$ independent of $t$ such that
$$
\int_P u^2(t) ~ d \mu < C
$$
for all $t \in [0, T)$. Then Lemma 3.3 in \cite{H1} shows that there exists a constant $C$ independent of $t$ such that
$$
\min_{x \in \bar{P}} u(t, x) > C,
$$
for all $t \in [0, T)$. By Proposition 3.4 in \cite{H1}, it implies that on the complex side, the $\max$ of relative Kaehler potentials $\varphi_{\CC \PP^2} (t)$ has a uniform upper bound, i.e.
$$
\max_{y \in X} \varphi_{\CC \PP^2} (t, y) < C
$$
where $t \in [0, T)$ and $C$ is some constant independent of $t$. Also by Corollary 3.7 in \cite{H1}, there exists some constant $C$ independent of $t$ such that for any $t \in [0, T)$, one has
$$
\max_{y \in X} \varphi_{\CC \PP^2} (t, y) > C
$$ 
Once we control $\max_{y \in X} \varphi_{\CC \PP^2} (t, y)$ uniformly, we can apply the results in the section 4 in \cite{H1} to show that there exists some constant $C > 0$ independent of $t$ such that
$$
\int_{X} |\varphi_{\CC \PP^2} (t)| ~ \omega^3 < C, \quad \int_{X} |\varphi_{\CC \PP^2} (t)| ~ \omega(t)^3 < C
$$

Since there exists a constant $C_1 > 0$ independent of $t$ such that for any $t \in [0, T)$, we have
$$
|\max_{y \in X} \varphi_{\CC \PP^2} (t, y)| < C_1.
$$
Applying Proposition 5.1 in \cite{H1} and \autoref{Sob}  in this section to $\varphi_{\CC \PP^2} (t, z) - C_1 - 1$, we obtain that there exists a constant $C > 0$ independent of $t$ such that
$$
\| \varphi_{\CC \PP^2} (t) \|_{L^2(\omega(t))} < C.
$$
Then Proposition 5.2 in \cite{H1} and  \autoref{Sob} in this section show that there exists a constant $C > 0$ independent of $t$ such that
$$
\| \varphi_{\CC \PP^2} (t) \|_{L^\infty} < C.
$$
\end{proof}

\begin{rmk}
One can also use Darvas's result \cite{Da, Da2} to obtain a proof of this corollary.
\end{rmk}

\end{document}